\documentclass[hidelinks,onefignum,onetabnum]{siamart251104}

\usepackage{lipsum}
\usepackage{amsfonts}
\usepackage{graphicx}
\usepackage{epstopdf}
\usepackage{algorithm,algorithmic}
\usepackage{verbatim}
\usepackage{multirow}
\usepackage{subcaption}
\usepackage{enumitem}
\usepackage{amsmath,booktabs,ctable,threeparttable}
\usepackage{amssymb,boxedminipage}
\usepackage{microtype}
\allowdisplaybreaks[4]
\ifpdf
  \DeclareGraphicsExtensions{.eps,.pdf,.png,.jpg}
\else
  \DeclareGraphicsExtensions{.eps}
\fi

\newsiamremark{remark}{Remark}
\newsiamremark{hypothesis}{Hypothesis}
\crefname{hypothesis}{Hypothesis}{Hypotheses}
\newsiamthm{claim}{Claim}
\newsiamremark{fact}{Fact}
\crefname{fact}{Fact}{Facts}

\crefname{hypothesis}{Hypothesis}{Hypotheses}

\usepackage{amsopn}
\DeclareMathOperator{\Diag}{diag}
\DeclareMathOperator{\Rank}{rank}

\DeclareMathOperator{\Range}{\mathcal{R}}

\newcommand{\Complex}{\mathbb{C}}
\newcommand{\Real}{\mathbb{R}}

\newcommand{\norm}[2][]{\Vert {#2} \Vert_{#1}}
\newcommand{\normbig}[2][]{\left\Vert {#2} \right\Vert_{#1}}
\newcommand{\abs}[1]{\vert {#1} \vert}

\headers{Restarting the SS--RR and SS--RRR methods}{Z. Jia and T. Liu}

\title{Stable and effective restarting of the SS--RR and SS--RRR methods
for computing partial eigenpairs of Hermitian matrices
\thanks{Submitted to the editors DATE.
\funding{The work was supported by the National Natural Science Foundation of China (NSFC) under
grants 12571404 and 12671453.}}}

\author{Zhongxiao Jia\thanks{Corresponding author. Department of Mathematical Sciences, Tsinghua University, 100084 Beijing, China
  (\email{jiazx@tsinghua.edu.cn}).}
\and Tianhang Liu\thanks{Department of Mathematical Sciences, Tsinghua University, 100084 Beijing, China
  (\email{lth@tsinghua.edu.cn}).}
}

\begin{document}
\maketitle

\begin{abstract}
We consider stable and effective restarts of the SS--RR and SS--RRR methods for
computing the eigenpairs of a large Hermitian matrix $A\in\mathbb{C}^{n\times n}$ with the
$n_{ev}$ eigenvalues in a given interval. Taking
the CJ--SS--RR and CJ--SS--RRR methods for instance, we prove that
the commonly used restart scheme and its orthonormalized variant, named the
natural restart schemes, are not numerically stable,
thereby lowering attainable accuracy of approximate
eigenpairs and missing some of the $n_{ev}$ desired eigenpairs.
To cure these deficiencies,
we propose a new restart scheme, called the combination restart, that
takes the $n\times \ell$ starting matrix to be the product of the matrix, which consists of
the selected $n_{ev}$ approximate eigenvectors, and
an $n_{ev}\times \ell$ Gaussian random
matrix. We prove the remarkable mathematical and numerical
superiority of the combination restart over the natural restarts,
then present a hybrid restart scheme that fully exploits the advantages of the natural restart
and combination restart, and finally develop CJ--SS--RR and CJ--SS--RRR algorithms with the hybrid restart.
Numerical experiments confirm that the new algorithms are numerically backward stable to
the working precision and more efficient than the CJ--SS--RRR algorithm with the natural restart. The combination restart and hybrid restart schemes can be straightforwardly extended to the
SS--RR and SS--RRR methods for general standard and generalized eigenvalue problems.

\end{abstract}

\begin{keywords}
CJ series expansion, Hermitian eigenvalue problem, CJ--SS--RR, CJ--SS--RRR, combination restart, hybrid restart, Gaussian random matrix, numerical stability
\end{keywords}

\begin{AMS}
65F15, 15A18, 65F10, 41A10
\end{AMS}

\section{Introduction}

For a large Hermitian matrix $A \in \Complex^{n \times n}$, consider
the eigenvalue problem $ A x = \lambda x$ with the 2-norm $\|x\|=1$.
We are interested in all the $n_{ev}$ eigenvalues of $A$ in a given interval $[a, b]$ counting multiplicities and their corresponding eigenvectors,
where the interval $[a, b]$ is contained in the spectrum interval of $A$.
Such problems arise in many disciplines, e.g.,
linear response eigenvalue problems in quantum chemistry \cite{LSEP},
and extraction of electromagnetic modes in target frequency bands \cite{Jin_FEM_EM},
and electronic structure calculations \cite{FEAST2009}, to mention only a few.

Over the past two decades, some
eigensolvers based on contour integrals,
such as the SS-type methods~\cite{block_SS_RR,SS_theory2010,SS_Arnoldi,SS_Hankel,SS_RR}
and the FEAST eigensolver \cite{FEAST2009},
have been proposed and developed for computing
the eigenpairs of a large matrix or matrix pair with the eigenvalues in a given region.
The earliest SS-type method is the SS--Hankel method \cite{SS_Hankel},
which was proposed by Sakurai and Sugiura
for solving the generalized eigenvalue problem of non-Hermitian matrix pairs.
By employing certain contour integrals, which correspond to the zeroth to higher-order moments,
the SS--Hankel method reduces the large generalized eigenvalue problem to
a much smaller one of a Hankel matrix pair,
from which the approximate eigenpairs of the original matrix pair are computed.
However,
because of the high ill-conditioning of the Hankel matrices~\cite{beckermann2000},
the SS--Hankel method suffers from numerical instability~\cite{SS_RR}.
Subsequently,
the SS--RR method \cite{SS_RR} and the SS--Arnoldi method \cite{SS_Arnoldi} were proposed
that are based on contour integrals to construct a search subspace,
apply the Rayleigh--Ritz projection to it, and compute the approximate eigenpairs.
These methods have their corresponding block versions \cite{block_SS_RR,SS_theory2010,SS_Arnoldi}.

The FEAST eigensolver proposed by Polizzi~\cite{FEAST2009}
for solving the Hermitian positive definite generalized eigenvalue problems is based on the
contour integral corresponding to the zeroth moment, which is closely related to
the spectral projector associated with the eigenvalues in the given interval.
For the Hermitian positive definite matrix pair,
the FEAST eigensolver has a variant based on the Zolotarev rational functions \cite{FEAST_Zolotarev}.
Later on, the FEAST eigensolver~\cite{FEAST_non_Hermitian,FEAST_oblique} was extended to
the generalized eigenvalue problem of a non-Hermitian matrix pair.

In the above contour integral-based methods,
the contour integrals are approximated by some numerical quadratures or the Zolotarev rational functions,
which require solving a large number of shifted complex linear systems at each iteration
and dominate the computational cost of FEAST solvers. Gavin and Polizzi \cite{IFEAST} apply the inexact solves for the shifted linear systems
involved in the FEAST solver
and propose an IFEAST solver,
which allows the shifted linear systems to be solved inexactly at each iteration
and gradually increases the accuracy requirement as the iteration proceeds,
thereby reducing the computational cost of iteratively solving the linear systems.
Since the region of interest is inside the matrix spectrum,
these linear systems may be highly indefinite,
and, in the meantime, if some shift is close to an eigenvalue
then the corresponding coefficient matrix must be very ill-conditioned. As a result,
the convergence of Krylov iterative methods,
such as GMRES and BiCGStab~\cite{Golub_Matrix}, is generally very slow and time consuming, even though
the solution accuracy is not high.
Moreover, in finite precision arithmetic,
the conditioning of these shifted linear systems critically affects the convergence of
the methods and attainable accuracy, as has been analyzed
and numerically confirmed in
\cite{CJ_FEAST_cross,CJ_FEAST_augmented,CJ_FEAST_GSVD}.

To overcome the aforementioned deficiencies,
Jia and Zhang~\cite{CJ_FEAST_cross,CJ_FEAST_augmented,CJ_FEAST_GSVD} make
use of the Chebyshev--Jackson (CJ) series expansion to approximate the step function associated with the
zeroth contour integral,
and establish compact pointwise convergence of the CJ series expansion.
They have developed the CJ-FEAST solvers for computing a partial singular value decomposition (SVD) and generalized singular value decomposition (GSVD) with the (generalized) singular values in a given
interval. A key difference between them and the contour integral-based FEAST solvers
is that proper CJ series expansions are used to construct approximations to the
underlying spectral projectors without solving any shifted linear system. Furthermore,
the CJ-FEAST solvers can compute the desired SVD and GSVD more accurately,
and can be directly adapted to the eigenvalue problems of a Hermitian matrix
and Hermitian positive definite matrix pair.
The authors of this paper extend the CJ series expansion from the step function to the piecewise continuous
functions that correspond to the zeroth to the higher-order moments,
and propose the CJ series-based block SS--RR (CJ--SS--RR) method~\cite{CJ_SS_RR}, which has numerically
been shown to be considerably more efficient and robust than the original SS--RR algorithm.
Subsequently, to resolve the issue of spurious Ritz values and improve the reliability and overall
efficiency of the CJ--SS--RR method, based on the refined Rayleigh--Ritz projection, the authors propose
a refined CJ--SS--RR method, abbreviated as CJ--SS--RRR,
and present a reliable removal approach for the spurious Ritz values \cite{CJ_SS_RRR}. Numerical experiments
illustrate that the CJ--SS--RRR algorithm is more efficient and reliable than the CJ--SS--RR
algorithm \cite{CJ_SS_RRR}.

The FEAST solvers fall into subspace iteration applied to the approximate spectral projector,
and they restart in the natural way that premultiplies
the current subspace by the approximate spectral projector \cite{FEAST_SI}.
By contrast, the SS-type methods were originally proposed as one-shot methods,
and no restart is used in their original forms.
To make relative residual norms of approximate eigenpairs
drop below some prescribed tolerance, the dimension of the search subspace may be
large, or one needs to use a sufficiently accurate numerical quadrature with
a lot of nodes. In the latter case, one must solve a lot of shifted linear systems accurately,
which may be too expensive.

A common compromise is to restrict the dimension of the search subspace and use a less accurate
numerical quadrature with not many nodes. Consequently, the subspace produced in such a way
generally does not contain sufficiently accurate information on the desired eigenvectors, so that
restart is necessary.
Sakurai, Futamura, and Tadano \cite{SS_restart1} propose a restart scheme for the SS-type methods,
which is shown to be equivalent to the one used in subspace iteration \cite{SS_theory2016,SS_review}. This restart scheme, called the natural restart,
is also adopted in the CJ--SS--RR method \cite{CJ_SS_RR} and the CJ--SS--RRR method \cite{CJ_SS_RRR}.
In exact arithmetic, such a restarted block SS--RR algorithm is a specific subspace iteration, and generates
a sequence of subspaces that contain increasingly more accurate approximate eigenvectors.
Unfortunately, in finite precision arithmetic,
the restarted block SS--RR method generally performs poorly
because it ignores the block Krylov-like structure of updated search subspaces
whose basis becomes increasingly ill-conditioned as restarts proceed.
We will rigorously prove this property in this paper. As a result, such kinds of restarted SS-type methods,
including the CJ--SS--RR and CJ--SS--RRR methods,
suffer from possibly severe numerical instability,
which ultimately causes the computed subspaces to deviate from the true ones or the dimension
of effective subspaces to be smaller than $n_{ev}$,
thereby lowering attainable accuracy of approximate eigenpairs and missing
some of the desired eigenpairs. That is,
the algorithms are not numerically backward stable to the working precision
and may not find all the $n_{ev}$ desired eigenpairs.


To avoid numerical instability and improve the computational accuracy, in this paper,
we adapt a restart scheme in \cite{jia_block} to our context, and propose a
combination restart that takes the $n\times \ell$ starting matrix to be the product
of the $n\times n_{ev} $ matrix, which consists of
properly selected $n_{ev}$ approximate eigenvectors, and an $n_{ev}\times \ell$
Gaussian random matrix, where $\ell<n_{ev}$ is the block size of SS-type methods,
so that the search subspace retains the information on the $n_{ev}$ desired eigenvectors
as much as possible. We prove that the new updated search subspaces retain the
effective information on all the $n_{ev}$ desired eigenvectors and the resulting
restarted algorithms are always numerically stable to the working precision.
Furthermore, we propose a hybrid restart scheme that combines the advantages of the natural
and combination restarts, and develop numerically stable and efficient restarted
CJ--SS--RR and CJ--SS--RRR algorithms
that not only converge faster than the restarted algorithms with either the natural
or combination restart but also are numerically backward stable to the working precision.

The paper is organized as follows.
In \cref{sec: previous works},
we review the CJ series expansion and some of its properties, establish a new property,
and briefly describe the CJ--SS--RRR method.
\Cref{sec: restart strategies} analyzes the deficiency of the natural restarts
and proposes the combination restart.
In \cref{sec: CJ--SS--RRR with hybrid restart},
we present the hybrid restart scheme and develop CJ--SS--RR and CJ--SS--RRR algorithms with the hybrid restart.
Numerical experiments are reported in \cref{sec: numerical experiments} to illustrate the performance of
the algorithms with different restarts and justify the superiority
of the CJ--SS--RRR algorithm with the hybrid restart.
We conclude the paper in \cref{sec: conclusion}.

Throughout the paper,
we denote by $\norm{\cdot}$ the 2-norm of a vector or matrix,
by $\Range(X)$ the column space of a matrix $X$, by the superscripts $T$ and $H$
the transpose and conjugate transpose of a matrix or
vector, by $\lambda(A)$ the set of eigenvalues of $A$,
and by $\lambda_{\min}$ and $\lambda_{\max}$ the minimum and maximum eigenvalues of $A$, respectively.
We also denote by $\sigma_i(X)$ the $i$-th largest singular value of a matrix $X$,
by $\sigma_{\max}(X)$ and $\sigma_{\min}(X)$ the maximum and minimum singular values,
and by $\kappa(X)=\sigma_{\max}(X)/\sigma_{\min}(X)$ the spectral condition number of $X$.
Since one can use a linear transformation
\begin{displaymath}
  l(t) = \frac{2t - \lambda_{\max} - \lambda_{\min}}{\lambda_{\max} - \lambda_{\min}}
\end{displaymath}
to map $[\lambda_{\min}, \lambda_{\max}]$ to $[-1, 1]$, we assume, for brevity
and without loss of generality,
that $\lambda_{\min} = -1$ and $\lambda_{\max} = 1$ when presenting most of the results and analysis.

\section{The CJ series expansion and the CJ--SS--RR and CJ--SS--RRR methods}
\label{sec: previous works}

We first review the CJ series expansion and some available results to be used
in this paper, and
establish a new property. We then describe the CJ--SS--RR and CJ--SS--RRR methods and
the removal approach for spurious (refined) Ritz values and extra (refined) Ritz vectors.

\subsection{The CJ series expansion}

Given an interval $[a, b] \subset [-1, 1]$, define the step function
\begin{equation} \label{eq: def: step-function}
  h(t) = \begin{cases}
    1,           & t \in (a, b) , \\
    \frac{1}{2}, & t \in \{ a, b \}, \\
    0,           & t \in [-1, 1] \setminus [a, b],
  \end{cases}
\end{equation}
whose values at the ends $a$ and $b$ are the means of the left and right limits of $h(t)$ at the two ends.
Let $p$ be a real polynomial.
Then the $d$-degree CJ series expansion of the piecewise continuous function $p(t)h(t)$ defined in \cite{CJ_SS_RR} is given by
\begin{equation} \label{eq: def: F_d}
  F_d(p)(t) := \frac{c_0}{2} +  \sum_{j=1}^d \rho_{j, d} c_j T_j(t) \approx p(t)h(t),
\end{equation}
where $T_j$ is the $j$-degree Chebyshev polynomial of the first kind \cite{ApproxIntroduction}, $c_j$ is the Chebyshev
coefficient given by
\begin{equation} \label{eq: def: Chebyshev coefficient}
  c_j = \frac{2}{\pi} \int_{-1}^1 \frac{p(t)h(t) T_j(t)}{\sqrt{1-t^2}}  {\rm d} t = \frac{2}{\pi} \int_a^b \frac{p(t) T_j(t)}{\sqrt{1-t^2}} {\rm d} t,
\end{equation}
and $\rho_{j,d}$ is the Jackson damping factor (cf. \cite{eig_count_Saad,Jackson_damp}) given by
\begin{equation} \label{eq: def: Jackson damping factor}
  \rho_{j, d} = \frac{\sin((j+1)\alpha_d)}{(d+2)\sin(\alpha_d)} + \left( 1-\frac{j+1}{d+2} \right) \cos(j\alpha_d), \quad \text{where } \alpha_d = \frac{\pi}{d+2}.
\end{equation}


We unify Theorem 4.1 of \cite{CJ_FEAST_cross} and Theorem 3.3 of \cite{CJ_SS_RR} as follows,
which describe the convergence of $F_d(p)(t)$ to $p(t)h(t)$ for $p\equiv 1$ and a general
real polynomial $p$, respectively,
and the eigenvalue distribution of $F_d(1)(A)$.

\begin{theorem} \label{thm: eigenvalues of F_d(1)(A)}
  Let
  \begin{equation} \label{eq: def: Delta_min and delta}
    \Delta_{\min}
    = \min_{ \lambda \in \lambda(A) \setminus \{a, b\} }
      \{ |\arccos(\lambda) - \alpha|,
         |\arccos(\lambda) - \beta| \}, \quad
    \delta = \frac{\pi^6}{2(d+2)^3\Delta_{\min}^4},
  \end{equation}
  where $\alpha = \arccos(a)$ and $\beta = \arccos(b)$.
  Then, for each of the cases (i) $p(s) \equiv 1, \lambda \in \lambda(A)$ and (ii) $\max_{s \in [a, b]} |p(s)| \leq 1, \lambda \in \lambda(A) \setminus [a, b]$, it holds that
  \begin{equation} \label{eq: error bound of F_d(p)(t)}
    \abs{F_d(p)(\lambda) - p(\lambda)h(\lambda)} \leq \delta.
  \end{equation}
  In particular,
  if the degree $d$ of the CJ series expansion satisfies
  \begin{equation} \label{eq: theoretical requirement on degree}
    d > \frac{\sqrt[3]{2}\pi^2}{\Delta_{\min}^{4/3}} - 2,
  \end{equation}
  then for all eigenvalues $\lambda_{in} \in \lambda(A) \cap [a, b]$ and $\lambda_{out} \in \lambda(A) \setminus [a, b]$ it holds that
  \begin{displaymath}
    1 \geq F_d(1)(\lambda_{in}) > \frac{1}{4} > F_d(1)(\lambda_{out}) \geq 0.
  \end{displaymath}
\end{theorem}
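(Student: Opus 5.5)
The plan is to work in the angular variable $t=\cos\theta$ and recognize $F_d(p)$ as a convolution with the Jackson kernel. With $\theta\in[0,\pi]$, the CJ expansion \eqref{eq: def: F_d} reads
\[
F_d(p)(\cos\theta)=\frac{1}{2\pi}\int_{-\pi}^{\pi} K_d(\theta-\phi)\,g(\phi)\,{\rm d}\phi ,
\qquad K_d(\psi)=1+2\sum_{j=1}^d\rho_{j,d}\cos(j\psi),
\]
where $g(\phi)=p(\cos\phi)h(\cos\phi)$ is even and $2\pi$-periodic. Two classical facts drive everything: the Jackson kernel is nonnegative (it is a squared Fejér-type kernel), and it has unit mass, $\frac{1}{2\pi}\int K_d=1$. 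We also need the pointwise tail estimate
\[
K_d(\psi)\le \frac{C}{(d+2)^3\psi^4}\quad\text{for }0<|\psi|\le\pi,
\]
which follows from the closed form $K_d(\psi)\propto \frac{1}{d+2}\bigl(\sin((d+2)\psi/2)/\sin(\psi/2)\bigr)^4$. I would take the constant $C$ from the explicit expression, using $\sin(x)\ge 2x/\pi$.

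For case (i), $p\equiv1$ and $g$ is the indicator of $\{\phi:\ |\phi|\in(\beta,\alpha)\}$. Nonnegativity and unit mass immediately give $0\le F_d(1)(\lambda)\le1$. Next fix $\lambda=\cos\theta$ with $\theta\notin\{\alpha,\beta\}$. The error $F_d(1)(\lambda)-h(\lambda)$ is then $\pm$ the kernel mass over the set where $g(\phi)\ne g(\theta)$. Every such $\phi$ lies at distance at least $\Delta_{\min}$ from $\theta$, modulo the even reflection, which is why $\Delta_{\min}$ is defined with arccos distances. Integrating the tail bound over $|\psi|\ge\Delta_{\min}$ gives a quantity of order $\frac{1}{(d+2)^3}\int_{\Delta_{\min}}^\infty\psi^{-4}{\rm d}\psi\sim\frac{1}{(d+2)^3\Delta_{\min}^3}$. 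This is even better than $\delta$, since $\Delta_{\min}\le\pi$ converts $\Delta_{\min}^{-3}$ into $\pi\,\Delta_{\min}^{-4}$. So the stated $\delta$ follows with room to spare, provided the constants are tracked. At the endpoints $\lambda\in\{a,b\}$, the error is $\le\delta$ by the symmetry of the kernel around the jump; there I would either invoke the cited Theorem 4.1 of \cite{CJ_FEAST_cross} or argue it by symmetry.

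For case (ii), $\lambda$ lies outside $[a,b]$, so $h(\lambda)=0$. The error is therefore $\frac{1}{2\pi}\int K_d(\theta-\phi)g(\phi)\,{\rm d}\phi$, with $|g|\le1$ supported where $|\phi|\in[\beta,\alpha]$, all at distance $\ge\Delta_{\min}$ from $\pm\theta$. The same tail integral gives the bound $\le\delta$.

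The final claim is then pure arithmetic. Condition \eqref{eq: theoretical requirement on degree} is equivalent to $(d+2)^3>2\pi^6/\Delta_{\min}^4$, i.e.\ $\delta<\tfrac14$. Hence $F_d(1)(\lambda_{in})\ge 1-\delta>\tfrac34>\tfrac14$ for interior eigenvalues. For an eigenvalue equal to $a$ or $b$, I would use the endpoint value $h=\tfrac12$, which gives $F_d(1)\ge\tfrac12-\delta>\tfrac14$. For outside eigenvalues, $F_d(1)(\lambda_{out})\le\delta<\tfrac14$. Together with $0\le F_d(1)\le1$ from positivity and unit mass, this is exactly the displayed chain of inequalities.

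The main obstacle is getting the explicit constant $\pi^6/2$ in the tail bound of the Jackson kernel. I would first try the fourth-power closed form together with $\sin(x/2)\ge x/\pi$ on $[0,\pi]$. If the constants do not match, I would fall back on citing the corresponding estimates in \cite{CJ_FEAST_cross,CJ_SS_RR}, since the statement is explicitly a unification of those two theorems.
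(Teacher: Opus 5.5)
\textbf{This is a genuine gap, not just a different route.} The paper gives no proof of this theorem; it quotes \cite[Theorem~4.1]{CJ_FEAST_cross} and \cite[Theorem~3.3]{CJ_SS_RR}. Your framework (convolution with $K_d=2u_d$, nonnegativity, unit mass) is the one the paper uses for \cref{thm: bounds of F_d(p)(t)}. Your reductions are correct: off the endpoints the discrepancy set lies at periodic distance at least $\Delta_{\min}$ from $\theta$, and the final chain is arithmetic once $\delta<\frac14$. The gaps are in the two places where you supply actual content.

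\emph{The tail estimate.} The closed form you quote is the classical Jackson kernel, a normalized square of the Fej\'er kernel. It is not the kernel generated by the $\rho_{j,d}$ of \cref{eq: def: Jackson damping factor}, not even up to a constant:
\begin{itemize}
\item it has degree $2(d+1)$;
\item it vanishes at $\psi=\pi$ for even $d$, whereas $K_d(\pi)>0$;
\item its normalizing constant is of order $(d+2)^{-3}$. With your prefactor $1/(d+2)$ you would only get a tail $O((d+2)^{-1}\psi^{-4})$, losing the cubic rate.
\end{itemize}
The $\rho_{j,d}$ are the normalized autocorrelations of $(\sin(\mu\alpha_d))_{\mu=1}^{d+1}$, so
\[
K_d(\psi)=\frac{2}{d+2}\Bigl|\sum_{\mu=1}^{d+1}\sin(\mu\alpha_d)\,e^{{\rm i}\mu\psi}\Bigr|^2
=\frac{2\sin^2\alpha_d\,\cos^2\bigl((d+2)\psi/2\bigr)}{(d+2)\,(\cos\psi-\cos\alpha_d)^2}.
\]
Use $\sin\alpha_d\le\pi/(d+2)$ and $\cos\alpha_d-\cos\psi=2\sin\frac{\psi+\alpha_d}{2}\sin\frac{\psi-\alpha_d}{2}\ge 2(\psi^2-\alpha_d^2)/\pi^2$. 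This gives $K_d(\psi)\le\pi^6/\bigl(2(d+2)^3(\psi^2-\alpha_d^2)^2\bigr)$ for $\alpha_d<\psi\le\pi-\alpha_d$, which is where $\pi^6/2$ comes from. This bound is not integrable at $\psi=\alpha_d$, so you still have to separate off small $|\psi|$. For instance, \cref{eq: error bound of F_d(p)(t)} is trivial if $\delta\ge 1$, and $\delta<1$ forces $\Delta_{\min}>\sqrt{\pi}\,\alpha_d$. Then you must track constants before integrating over $|\psi|\ge\Delta_{\min}$. Your plan can be completed this way, but as written its key estimate rests on a false formula.

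\emph{The endpoints.} At $\lambda=a$ your symmetry argument gives
\[
F_d(1)(a)-\tfrac12=-\frac{1}{2\pi}\int_{\alpha-\beta}^{\pi}K_d(\psi)\,{\rm d}\psi+\frac{1}{2\pi}\int_{\alpha+\beta}^{2\alpha}K_d(\psi)\,{\rm d}\psi .
\]
So the error is governed by kernel tails beyond $\alpha-\beta$, $\alpha+\beta$ and $2(\pi-\alpha)$ (similarly at $b$), not beyond $\Delta_{\min}$, which by definition ignores eigenvalues at $a$ and $b$. These distances dominate $\Delta_{\min}$ under two conditions:
\begin{itemize}
\item some eigenvalue lies in $(a,b)$, so that $\alpha-\beta\ge 2\Delta_{\min}$;
\item $\pm1\notin\{a,b\}$, so that, since $\pm1\in\lambda(A)$ after scaling, $\beta,\pi-\alpha\ge\Delta_{\min}$.
\end{itemize}
They do not hold in general, and without such a hypothesis the endpoint claim is false. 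Take $\lambda(A)=\{-1,0,1\}$, $a=0$ and fixed $d\ge 5$. Then $F_d(1)(a)\to 0$ as $b\downarrow a$, because all $c_j\to 0$. Meanwhile $\Delta_{\min}\to\pi/2$, $\delta\to 8\pi^2/(d+2)^3<\frac14$, and \cref{eq: theoretical requirement on degree} holds, yet $a\in\lambda(A)\cap[a,b]$ has $F_d(1)(a)<\frac14$. So neither ``by symmetry'' nor a bare citation settles the endpoint case. You need an extra hypothesis of the kind above, and you must use it.
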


The next theorem gives a compact bound for $|F_d(p)(t)|$.

\begin{theorem} \label{thm: bounds of F_d(p)(t)}
  Let $d \geq 2$ and $p$ be a real polynomial. Then
  \begin{displaymath}
  \max_{  t \in [-1, 1]} \abs{F_d(p)(t)} \leq \max_{t \in [a, b]} \abs{p(t)}.
  \end{displaymath}
\end{theorem}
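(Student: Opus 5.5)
The plan is to write $F_d(p)$ as a convolution of $p h$ with the Jackson kernel, which is nonnegative and has unit mass; the bound then follows from the triangle inequality. The Jackson damped Chebyshev series is exactly such a convolution in the angle variable. Substitute $t=\cos\theta$, $\theta\in[0,\pi]$, and set $g(\theta)=p(\cos\theta)h(\cos\theta)$, extended evenly and $2\pi$-periodically. The Chebyshev coefficients $c_j$ in \eqref{eq: def: Chebyshev coefficient} are then the Fourier cosine coefficients of $g$:
\begin{displaymath}
c_j=\frac{1}{\pi}\int_{-\pi}^{\pi} g(\phi)\cos(j\phi)\,{\rm d}\phi .
\end{displaymath}
Consequently
\begin{displaymath}
F_d(p)(\cos\theta)=\frac{1}{2\pi}\int_{-\pi}^{\pi} g(\phi)\,K_d(\theta-\phi)\,{\rm d}\phi,\qquad K_d(x)=1+2\sum_{j=1}^d \rho_{j,d}\cos(jx).
\end{displaymath}
This step only requires expanding $\cos(j\theta)\cos(j\phi)$ and using the evenness of $g$.

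The next step is to show that $K_d(x)\ge 0$ and $\frac{1}{2\pi}\int_{-\pi}^{\pi}K_d=1$. The normalization is immediate because the constant term of $K_d$ is $1$ (equivalently $\rho_{0,d}=1$). Nonnegativity is the classical property of the Jackson kernel. I would prove it by showing that, with the damping factors \eqref{eq: def: Jackson damping factor}, $K_d(x)$ is a positive multiple of $\bigl|\sum_{k=0}^{d+1}\sin\bigl(\tfrac{(k+1)\pi}{d+2}\bigr)e^{ikx}\bigr|^2$. To do this, compute the autocorrelation $\sum_k s_k s_{k+j}$ with $s_k=\sin((k+1)\alpha_d)$ by product-to-sum formulas, and check that, after normalizing by $\sum_k s_k^2=(d+2)/2$, it equals $\rho_{j,d}$. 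This Fejér--Riesz-type identity is the main computational obstacle. It is standard (see \cite{Jackson_damp,eig_count_Saad}), so I would cite it, or verify it by a direct trigonometric sum computation. The hypothesis $d\ge2$ presumably guarantees that this formula is in its valid range; that should be checked during the verification.

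Given these two facts, for every $\theta$,
\begin{displaymath}
|F_d(p)(\cos\theta)|\le \frac{1}{2\pi}\int_{-\pi}^{\pi}|g(\phi)|K_d(\theta-\phi)\,{\rm d}\phi\le \sup_{\phi}|g(\phi)|.
\end{displaymath}
Since $h$ vanishes outside $[a,b]$ and $0\le h\le 1$ inside, $\sup_\phi|g(\phi)|\le\max_{t\in[a,b]}|p(t)|$. The endpoints $a,b$, where $h=\tfrac12$, form a measure-zero set and do not affect the integral. Taking the maximum over $\theta\in[0,\pi]$, i.e. over $t\in[-1,1]$, gives the claim.
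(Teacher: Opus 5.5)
Your proposal is correct and follows the paper's proof. The paper likewise writes $F_d(p)(\cos\theta)=\frac{1}{\pi}\int_{-\pi}^{\pi} g(\theta+\phi)u_d(\phi)\,{\rm d}\phi$ with $u_d=\frac12 K_d$, citing Rivlin's Lemma 1.4, and then uses $u_d\ge 0$ and $\frac{1}{\pi}\int_{-\pi}^{\pi}u_d=1$, which it quotes from the proof of Theorem 3.1 in \cite{CJ_FEAST_cross} instead of re-deriving them. Your squared-modulus identity $K_d(x)=\frac{2}{d+2}\bigl|\sum_{k=0}^{d}\sin((k+1)\alpha_d)e^{ikx}\bigr|^2$ does check out, and it holds for every $d\ge 0$, so the hypothesis $d\ge 2$ plays no role in that step.
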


\begin{proof}
  For $t\in [-1,1]$, let $t=\cos\theta$ for $\theta\in [0,\pi]$. Define
  $q_d(\theta) = F_d(p)(\cos \theta)$ and $g(\theta) = p(\cos \theta) h(\cos \theta)$.
  Then by Lemma 1.4 of \cite[section 1.1.2]{ApproxIntroduction}, we have
  \begin{displaymath}
    q_d(\theta) = \frac{1}{\pi} \int_{-\pi}^\pi g(\theta + \phi) u_d(\phi) {\rm d} \phi
  \end{displaymath}
  with $u_d(\phi) = \frac{1}{2} + \sum_{j=1}^d \rho_{j, d} \cos(j \phi)$.
  The proof of Theorem 3.1 in \cite{CJ_FEAST_cross} has shown
  \begin{displaymath}
    u_d(\phi) \geq 0, \quad \frac{1}{\pi} \int_{-\pi}^\pi u_d(\phi) {\rm d} \phi = 1.
  \end{displaymath}
  Thus, for $t \in [-1, 1]$ and $\theta = \arccos(t)$, we obtain
  \begin{align*}
    \abs{F_d(p)(t)}
    = \abs{q_d(\theta)}
    &\leq \frac{1}{\pi} \int_{-\pi}^\pi \abs{g(\theta + \phi)} u_d(\phi) {\rm d} \phi \\
    &\leq \max_{s \in [a, b]} {\abs{p(s)}} \cdot \frac{1}{\pi} \int_{-\pi}^\pi u_d(\phi) {\rm d} \phi
    = \max_{s \in [a, b]} \abs{p(s)}.
  \end{align*}
\end{proof}

\subsection{The CJ--SS--RR and CJ--SS--RRR methods}\label{subsec:2}

Given positive integers $M, \ell$ and a full column rank matrix $V \in \Complex^{n \times \ell}$, define
\begin{equation} \label{eq: def: polynomial approximation of S}
  S =(S_0,\ldots,S_{M-1})=: \left( F_d(p_0)(A) V, \dots, F_d(p_{M-1})(A) V \right),
\end{equation}
where $p_j(t)$'s are shifted-and-scaled Chebyshev polynomials of degree $j$ of the first kind:
\begin{equation} \label{eq: def: p_j(t)}
  p_j(t) = T_j\left( \frac{2t - a - b}{b - a} \right), \quad j = 0, 1, \dots, M-1.
\end{equation}
Notice that $\Range(S)$ is the search subspace used in the CJ--SS--RR and CJ--SS--RRR methods \cite{CJ_SS_RRR} and that $|p_j(t)|\leq 1$ for $t\in [a,b]$. One should choose $\ell$ no smaller
than the multiplicity of each distinct eigenvalue in $[a,b]$; see \cite{jia_block}.
We remark that, in the contour integral-based SS--RR and SS--RRR methods,
$S_i$, $i=0,\ldots, M-1$, are generated by a contour integral-based numerical
quadrature rule, e.g., the trapezoidal rule, where the
monomial moments and the better shifted-and-scaled Chebyshev moments
are used in \cite{block_SS_RR,SS_RR} and in \cite{CJ_SS_RR,CJ_SS_RRR}, respectively.


Let the columns of $U$ form an orthonormal basis of $\Range(S)$. Then
the CJ--SS--RR and CJ--SS--RRR methods compute approximations to some of the eigenpairs of $A$
by the Rayleigh--Ritz procedure and the refined Rayleigh--Ritz
procedure \cite{bai2000,Parlett_Symmetric,Stewart_Eigen,book:VanDerVorst2002}
as follows:
\begin{align}
  \label{eq: def: Rayleigh--Ritz}
  & U^H A U y = \tilde{\lambda} y, \quad \tilde{x} = U y, \quad \text{with } \norm{y} = 1, \\
  \label{eq: def: refined vector}
  & \hat{x} = \mathop{\arg\min}_{w \in \Range(S), \norm{w} = 1} \norm{ (A - \tilde{\lambda} I) w },  \quad \hat{\lambda} = \hat{x}^H A \hat{x},
\end{align}
respectively, where $(\tilde{\lambda}, \tilde{x})$ and $(\hat{\lambda}, \hat{x})$
are called the Ritz pairs and refined Ritz pairs, respectively. For the efficient and accurate
computation of $\hat x$, see \cite{jia1997refined, jia2000refined,Stewart_Eigen}; for a general
search subspace, the authors in \cite{CJ_SS_RRR} have recently proposed
a more efficient procedure to compute $\hat x$ accurately.

As has been argued and elaborated in \cite{CJ_SS_RRR}, for $M\ell >n_{ev}$,
there may be spurious Ritz values in $[a,b]$; that is, there may be more Ritz values  in $[a,b]$
that either
approximate the same eigenvalue counting its multiplicity
or do {\em not} approximate any eigenvalue. In both cases, the CJ--SS--RR
method may encounter severe difficulties. But the CJ--SS--RRR method can settle these two issues
perfectly: Unlike the Ritz pairs obtained by the CJ--SS--RR method,
measured by the residual norms of refined Ritz pairs, once the projection subspace
becomes reasonably good, if there are more refined Ritz vectors that either
converge or do {\em not} converge, then in the first case
the refined Ritz vectors are nearly linearly dependent and in the second case
the residual norms must not be small.
The authors propose an effective and efficient refined removal approach in \cite{CJ_SS_RRR} to remove
extra refined Ritz vectors and thus spurious refined Ritz values, so that the CJ--SS--RRR method with the refined removal approach can compute the $n_{ev}$ desired eigenpairs reliably. The same
removal approach applies to the CJ--SS--RR method, but may not work well because
of the intrinsic convergence deficiency of the Rayleigh--Ritz procedure; see
\cite{CJ_SS_RRR} for details.

Specifically, the refined removal approach consists of two steps.
First,
two refined Ritz values $\hat{\lambda}_i$ and $\hat{\lambda}_j$ in $[a,b]$
are proved to be in the same cluster if
\begin{equation} \label{eq: criterion for clustering refined Ritz values}
  \abs{\hat{\lambda}_i - \hat{\lambda}_j}
  \leq \norm{\hat{r}_i} + \norm{\hat{r}_j},
\end{equation}
where $\hat{r}_i = A \hat{x}_i - \hat{\lambda}_i \hat{x}_i$, provided that $\norm{\hat{r}_i}$
and $\norm{\hat{r}_j}$ are reasonably small.
Second,
for a cluster of $s$ refined Ritz values $\hat{\lambda}_{i_1}, \hat{\lambda}_{i_2}, \dots,
\hat{\lambda}_{i_s}$, write the corresponding refined
Ritz vectors $\hat{x}_{i_1}, \hat{x}_{i_2}, \dots, \hat{x}_{i_s}$.
Then if
\begin{equation}\label{xhat}
\hat{X}_i = (\hat{x}_{i_1}, \hat{x}_{i_2}, \dots, \hat{x}_{i_s})
\end{equation}
has $s-s_1$ singular values
{\em smaller} than
\begin{equation} \label{eq: criterion for removing refined Ritz vectors}
  \sqrt{s} \max_{1 \leq t \leq s} \frac{\norm{\hat{r}_{i_t}}}{\tilde{g}_{i_t}},
\end{equation}
where $\tilde{g}_{i_t} = \min_{\substack{j \not\in \{i_1, \dots, i_s\}}} \abs{\hat{\lambda}_{j} - \hat{\lambda}_{i_t}}$, the numerical rank of $\hat{X}_i$ is $s_1$ and
there are $s-s_1$ spurious refined Ritz values
and extra refined Ritz vectors in the cluster. The refined removal approach is outlined in \cref{alg: removal}; replacing the refined Ritz values,
the refined Ritz vectors and the corresponding residuals in the algorithm, we get the removal approach for the
CJ--SS--RR method. We refer to \cite{CJ_SS_RRR} for theory and computation.
In the sequel, whenever speaking of CJ--SS--RRR, we mean that it has been combined with
the refined removal approach; for brevity and the sake of efficiency and reliability, we will only
focus on CJ--SS--RRR.

\begin{algorithm}[htbp]
\caption{Refined removal approach}
\label{alg: removal}
\begin{algorithmic}[1]
    \STATE
    Cluster the refined Ritz values based on
    \cref{eq: criterion for clustering refined Ritz values}.

 \STATE {
  For each cluster,
  compute the SVD of $\hat{X}_i$ defined by \eqref{xhat},
  and check how many of its singular values are smaller than the quantity in
  \cref{eq: criterion for removing refined Ritz vectors};
  retain $s_1$ refined Ritz values and vectors as genuine, and remove the extra $s-s_1$ refined Ritz vectors.
  }
\end{algorithmic}
\end{algorithm}


\section{The restart schemes}
\label{sec: restart strategies}
Given a starting matrix $V^{(0)} \in \Complex^{n \times \ell}$,
the subspace $\mathcal{R}(S^{(k)})$
in the $k$-th restart of the CJ--SS--RRR method is the column space of
\begin{equation} \label{eq: def: k-th search subspace S^(k)}
  S^{(k)} = \left( F_d(p_0)(A)V^{(k)}, \dots, F_d(p_{M-1})(A)V^{(k)} \right), k=0,1,\ldots,
\end{equation}
where $p_j(t)$'s with $p_0(t)=1$ are the polynomials defined in \cref{eq: def: p_j(t)}. Different choices of $V^{(k)}$ with $k\geq 1$ lead to different restart schemes. We focus on reasonable and effective
choices of $V^{(k)}$ in this section, establish rigorous theoretical results on
each of them, get insight into
their numerical behavior, and highlight their remarkable numerical effects on restarted algorithms.

\subsection{The SI restart or natural restart}
\label{subsec: SI restart}

The restart scheme proposed by Sakurai, Futamura, and Tadano \cite{SS_restart1} and applied to
our context reads as
\begin{equation} \label{eq: def: SI restart}
  V^{(k+1)} = F_d(1)(A) V^{(k)},
\end{equation}
which is the first $\ell$ columns of $S^{(k)}$ due to $p_0(t) \equiv 1$ in \cref{eq: def: p_j(t)}.
In the SS-type methods, $F_d(1)(A)$ is an approximation to the zeroth moment, i.e.,
the (generalized) spectral projector of the eigenvalues of $A$ in $[a,b]$,
obtained by a numerical quadrature \cite{CJ_FEAST_augmented,CJ_FEAST_GSVD}.

A CJ--SS--RRR algorithm with such restart scheme is described in \cref{alg: CJSSRRR}.
In step \ref{step: CJSSRRR: convergence test}, a refined Ritz pair $(\hat{\lambda}_i^{(k)}, \hat{x}_i^{(k)})$
is said to have converged if its relative residual norm drops below a prescribed stopping
tolerance $tol$:
\begin{equation} \label{eq: convergence criterion}
  \frac{\norm{\hat{r}_i^{(k)}}}{\norm{A}} < tol.
\end{equation}

\begin{algorithm}[htbp]
\caption{The restarted CJ--SS--RRR algorithm}
\label{alg: CJSSRRR}
\begin{algorithmic}[1]
  \REQUIRE The Hermitian matrix $A$, the real interval $[a, b]$, the series degree $d$, the moment number $M$, and a column orthonormal matrix $V^{(0)}\in \mathbb{C}^{n\times \ell}$ with $M\ell \geq n_{ev}$.
  \STATE Compute the CJ coefficients by \cref{eq: def: Chebyshev coefficient,eq: def: Jackson damping factor}.
  \FOR {$k = 0, 1, 2, \ldots$}
    \STATE Compute $S^{(k)}$ by \cref{eq: def: k-th search subspace S^(k)}.
    \STATE Compute the QR decomposition: $S^{(k)} = U^{(k)}R^{(k)}$.
    \STATE Use the Rayleigh--Ritz procedure \cref{eq: def: Rayleigh--Ritz} to
     compute the Ritz values $\tilde{\lambda}_i^{(k)}, i = 1, 2, \ldots, M\ell$. \label{step: CJSSRRR: RR}
    \STATE Select all
    the Ritz values $\tilde{\lambda}_1^{(k)}, \ldots, \tilde{\lambda}_p^{(k)}$ in $[a, b]$, and
    compute their corresponding refined Ritz vectors $\hat{x}_1^{(k)}, \ldots, \hat{x}_p^{(k)}$ by \cref{eq: def: refined vector}. \label{step: CJSSRRR: refined}
    \STATE Compute the refined Ritz values $\hat{\lambda}_1^{(k)}, \ldots, \hat{\lambda}_p^{(k)}$ and the residuals $\hat{r}_1^{(k)}, \ldots, \hat{r}_p^{(k)}$.
    \STATE Remove spurious refined Ritz values among $\hat{\lambda}_1^{(k)}, \ldots,
    \hat{\lambda}_p^{(k)}$ and
    extra refined Ritz vectors by \cref{alg: removal}. \label{step: CJSSRRR: removal}
    \STATE Test the convergence of the retained refined Ritz pairs. \label{step: CJSSRRR: convergence test}
    \STATE If not converged, update the starting $V^{(k+1)} = F_d(1)(A) V^{(k)}$, the first $\ell$ columns of $S^{(k)}$, and restart. \label{step: CJSSRRR: restart}
  \ENDFOR
  \RETURN The converged $n_{ev}$ refined Ritz pairs $(\hat{\lambda}_i^{(k)}, \hat{x}_i^{(k)})$.
\end{algorithmic}
\end{algorithm}

It is shown in \cite{SS_theory2016,SS_review} that
the restart scheme \eqref{eq: def: SI restart}
performs one iteration of subspace iteration on the matrix $F_d(1)(A)$
applied to $\Range({S}^{(k)})$ because
\begin{align}
  S^{(k+1)} &= \left( F_d(1)(A) V^{(k+1)}, \dots, F_d(p_{M-1})(A) V^{(k+1)} \right) \nonumber\\
  &= F_d(1)(A) \left( F_d(1)(A) V^{(k)}, \dots, F_d(p_{M-1})(A) V^{(k)} \right) \nonumber\\
  &= F_d(1)(A) S^{(k)}, \label{onerestart}
\end{align}
i.e.,
\begin{equation} \label{eq: SI property of S^(k)}
  \Range(S^{(k+1)}) = F_d(1)(A) \cdot \Range(S^{(k)}).
\end{equation}
Therefore, the convergence results on subspace iteration \cite{Golub_Matrix,Parlett_Symmetric,Saad_Eigen,Stewart_Eigen} apply to
\cref{alg: CJSSRRR}; see \cite{SS_theory2016,SS_review}.
We call \eqref{eq: def: SI restart} the \emph{naive subspace iteration (SI) restart} or {\em naive natural restart}.

However, in finite precision arithmetic,
the naive SI restart \eqref{eq: def: SI restart} may suffer from
severe numerical instability: Denote $P=F_d(1)(A)$.
Then it is inductive from \eqref{onerestart} that
\cref{alg: CJSSRRR} computes $S^{(k)} = P^k S^{(0)}$ at restart $k$.
Unfortunately, as is easily seen and already
stated in \cite{Parlett_Symmetric}, the columns of
$P^k S^{(0)}$ are a bad basis of $\Range(P^k S^{(0)})$ because
each column of $P^k S^{(0)}$ converges to the eigenspace
associated with the dominant eigenvalue of $P$ as $k$ increases. In other words,
the condition number of $S^{(k)}$ will tend to infinity as $k$ increases.
Consequently, the {\em computed} subspace generated by the factor $U^{(k)}$ at step 4 of
\cref{alg: CJSSRRR}, which is the Q-factor of the {\em computed}
QR decomposition of highly ill-conditioned $S^{(k)}$, is far away from
the true subspace $\Range(S^{(k)})$ \cite[Chapter 19]{higham} and ultimately bears no relation
to $\Range(S^{(k)})$. As a result,
\cref{alg: CJSSRRR} should be numerically unstable. We will
rigorously prove quantitative results on the behavior of $S^{(k)}$,
and gain more insight into severe consequences these cause.


As a remedy, suppose that $P V^{(k)}$ is of full column rank. Then instead of using $V^{(k+1)}=PV^{(k)}$
in \eqref{onerestart},
we take the column orthonormal matrix $V^{(k+1)}$ by computing the compact QR decomposition
\begin{equation}\label{eq: def: SI-orth restart}
P V^{(k)} = V^{(k+1)} R,
\end{equation}
where $R$ is nonsingular upper triangular.
We call $V^{(k+1)}$ in \cref{eq: def: SI-orth restart}
the \emph{orthonormalized SI restart} or simply {\em natural restart},
and construct
\begin{align*}
  S^{(k+1)}
  &= \left( F_d(1)(A) V^{(k+1)}, \dots, F_d(p_{M-1})(A) V^{(k+1)} \right) \\
  &= F_d(1)(A) \left( F_d(1)(A) V^{(k)} R^{-1}, \dots, F_d(p_{M-1})(A) V^{(k)} R^{-1} \right)  \\
  &= F_d(1)(A) S^{(k)} \left( I \otimes R \right)^{-1}=P S^{(k)} \left( I \otimes R \right)^{-1},
\end{align*}
where $\otimes$ denotes the Kronecker product. Therefore,
relation \cref{eq: SI property of S^(k)} still holds,
and each natural restart performs one iteration of subspace iteration on $P$.
Nevertheless, in finite precision arithmetic,
the numerical performance of the natural
restart \eqref{eq: def: SI-orth restart} is expected to be better than that of the naive
restart \eqref{onerestart} since
the former $V^{(k+1)}$ is column orthonormal and the latter is not and may be ill-conditioned.
Yet we will later on prove that,
for both the natural restarts \eqref{eq: def: SI restart}
and \eqref{eq: def: SI-orth restart}, it always holds that
\begin{equation}\label{infbound}
  \lim_{k \to \infty} \frac{\sigma_1(S^{(k)})}{\sigma_{n_{ev}}(S^{(k)})} = \infty.
\end{equation}

\begin{remark}\label{rem3.1}
Relation \eqref{infbound} indicates that the numerical rank of $S^{(k)}$ must be smaller
than $n_{ev}$ when $k$ is large enough. Therefore, in finite precision arithmetic, given a dropping
threshold, for sufficiently
large $k$, the effective dimension of $\Range(S^{(k)})$
is ultimately smaller than $n_{ev}$ when using the truncated singular value decomposition (TSVD) or
rank revealing QR decomposition of $S^{(k)}$
to determine the numerical rank and obtain an effective subspace with the
dimension being the numerical rank, as commonly done in the SS--type methods.
In this case, the restarted SS-type methods can compute only {\em partial rather than all}
the $n_{ev}$ desired eigenpairs. On the other hand, if one simply computes
the QR decomposition of $S^{(k)}$ and does not reduce the subspace dimension,
the computed subspace is ultimately
far from the true $\Range(S^{(k)})$ because of numerical rank deficiency of $S^{(k)}$.
As a disastrous consequence, if greater accuracy is required, 
\cref{alg: CJSSRRR}
fails to extract $n_{ev}$ eigenpairs from $\Range(S^{(k)})$,
so do the other SS-type algorithms using the same natural restart.
We will numerically justify these phenomena in \cref{sec: numerical experiments}.
\end{remark}

In what follows we analyze the natural restart scheme and prove \eqref{infbound} quantitatively.
Before proceeding, 
we need some notation and definitions. Denote the spectral decomposition of $A$ by
\begin{displaymath}
  A = X \Lambda X^H,
\end{displaymath}
where $X = (x_1, x_2, \dots, x_n) \in \Complex^{n \times n}$ is unitary and $\Lambda = \Diag\{ \lambda_1,
\lambda_2, \dots, \lambda_n \} \in \Real^{n \times n}$ is diagonal.
Denote by $\gamma_i =F_d(1)(\lambda_i)$, $i = 1, 2, \dots, n$
the eigenvalues of $P=F_d(1)(A)$, and assume that the degree $d$ of the CJ series expansion satisfies \cref{eq: theoretical
requirement on degree}.
Then by \cref{thm: eigenvalues of F_d(1)(A)},
we label the eigenvalues of $P$ in decreasing order
\begin{equation} \label{eq: eigenvalues of F_d(1)(A)}
  1 \geq \gamma_1 \geq \gamma_2 \geq \dots \geq \gamma_{n_{ev}} > \frac{1}{4} > \gamma_{n_{ev}+1} \geq \dots \geq \gamma_n \geq 0,
\end{equation}
so that the desired eigenvalues $\lambda_1, \lambda_2, \dots, \lambda_{n_{ev}}$ of $A$ in $[a,b]$
correspond to the $n_{ev}$ dominant eigenvalues of $P$ and the associated eigenvectors are
$x_1,x_2,\ldots,x_{n_{ev}}$. Write
\begin{equation} \label{partition}
  \begin{aligned}
    & X_i = (x_1, x_2, \dots, x_i), \ \Lambda_i = \Diag\{ \lambda_1, \lambda_2, \dots, \lambda_i \}, \ \Gamma_i = \Diag\{ \gamma_1, \gamma_2, \dots, \gamma_i \}, \\
    & X_{i,\perp} = (x_{i+1}, \dots, x_n), \ \Lambda_{i,\perp} = \Diag\{ \lambda_{i+1}, \dots, \lambda_n \}, \ \Gamma_{i,\perp} = \Diag\{ \gamma_{i+1}, \dots, \gamma_n \}.
  \end{aligned}
\end{equation}

The following definition of canonical angles between two subspaces with possibly unequal dimensions can be
found in \cite{Stewart_Eigen}.

\begin{definition} \label{def: canonical angles between subspaces}
  Let $\mathcal{X}$, $\mathcal{Y}$ be two subspaces of $\Complex^n$ with $\dim \mathcal{X} \geq \dim \mathcal{Y} = \ell > 0$,
  and the columns of $X$ and $Y$ be orthonormal bases of $\mathcal{X}$ and $\mathcal{Y}$, respectively.
  Then the $\ell$ canonical angles between $\mathcal{X}$ and $\mathcal{Y}$ are
  \begin{displaymath}
    \theta_i(\mathcal{X}, \mathcal{Y}) = \arccos \sigma_{\ell-i+1}(X^H Y), \quad i = 1, 2, \dots, \ell.
  \end{displaymath}
  In particular, write the largest canonical angle
  \begin{displaymath}
    \angle(\mathcal{X}, \mathcal{Y}) := \theta_1(\mathcal{X}, \mathcal{Y}) = \arccos \sigma_{\min}(X^H Y).
  \end{displaymath}
\end{definition}

For $\angle(\mathcal{X}, \mathcal{Y})$, we have the following proposition,
which is Theorem 2.2 in \cite[Chapter 4]{Stewart_Eigen} and can also be retrieved from~\cite{jiastewart2001}.

\begin{proposition} \label{prop: sin angle between subspaces}
  Let $\mathcal{X}$ and $\mathcal{Y}$ be two subspaces of $\Complex^n$ with $\dim \mathcal{X} \geq \dim \mathcal{Y} > 0$,
  and the columns of $X$ and $Y$ be their orthonormal bases.
  Denote by the columns of $X_\perp$ an orthonormal basis of the orthogonal complement of $\mathcal{X}$ with
  respect to $\mathbb{C}^n$.
  Then
  \begin{equation} \label{eq: def: sin angle between subspaces}
    \sin \angle(\mathcal{X}, \mathcal{Y}) = \norm{ X_\perp^H Y }.
  \end{equation}
\end{proposition}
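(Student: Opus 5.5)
Since the columns of $X$ and $X_\perp$ together form a unitary matrix, I will reduce the claim to the relation between the singular values of $X^H Y$ and of $X_\perp^H Y$. Let $(X, X_\perp)$ be unitary. Because $Y$ has orthonormal columns, the matrix
\begin{displaymath}
  \begin{pmatrix} X^H Y \\ X_\perp^H Y \end{pmatrix} = (X, X_\perp)^H Y
\end{displaymath}
also has orthonormal columns. Taking the Gram matrix then gives
\begin{displaymath}
  (X^H Y)^H (X^H Y) + (X_\perp^H Y)^H (X_\perp^H Y) = I_\ell ,
\end{displaymath}
where $\ell = \dim\mathcal{Y}$. This identity is the only structural input I need.

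Both Gram matrices $G_1 = (X^HY)^H(X^HY)$ and $G_2 = (X_\perp^HY)^H(X_\perp^HY)$ are $\ell\times\ell$ Hermitian positive semidefinite matrices, and $G_2 = I_\ell - G_1$. They therefore share eigenvectors, and their eigenvalues pair up as $\mu$ and $1-\mu$. The smallest eigenvalue of $G_1$ is $\sigma_{\min}(X^HY)^2$; here it matters that $\dim\mathcal{X}\ge\ell$, so $X^HY$ has at least $\ell$ rows and $\sigma_{\min}$ means $\sigma_\ell$, consistent with \cref{def: canonical angles between subspaces}. Hence the largest eigenvalue of $G_2$, which equals $\norm{X_\perp^HY}^2$, is $1-\sigma_{\min}(X^HY)^2$.

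By \cref{def: canonical angles between subspaces}, $\cos\angle(\mathcal{X},\mathcal{Y}) = \sigma_{\min}(X^HY)\in[0,1]$. Therefore
\begin{displaymath}
  \sin^2\angle(\mathcal{X},\mathcal{Y}) = 1-\sigma_{\min}(X^HY)^2 = \norm{X_\perp^HY}^2 ,
\end{displaymath}
and taking nonnegative square roots gives \cref{eq: def: sin angle between subspaces}, since the angle lies in $[0,\pi/2]$.

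The only steps needing care are small bookkeeping points:
\begin{itemize}
  \item The definition indexes $\sigma_{\ell-i+1}$ among exactly $\ell$ singular values, which is why $\dim\mathcal{X}\ge\dim\mathcal{Y}$ is needed.
  \item The result must not depend on the chosen orthonormal bases. This holds because any other choices multiply $X^HY$ and $X_\perp^HY$ by unitary factors, which leaves singular values and norms unchanged.
  \item The case $X_\perp$ empty, i.e.\ $\mathcal{X}=\Complex^n$, is trivial: both sides are zero.
\end{itemize}
I do not expect any real obstacle.
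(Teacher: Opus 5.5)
Your proof is correct and complete, including the unequal-dimension case $\dim\mathcal{X}>\dim\mathcal{Y}$ that the paper actually uses. The identity $(X^HY)^H(X^HY)+(X_\perp^HY)^H(X_\perp^HY)=I_\ell$, together with $\dim\mathcal{X}\ge\ell$ (which makes $\sigma_{\min}^2(X^HY)$ the smallest eigenvalue of the first Gram matrix), gives $\norm{X_\perp^HY}^2=1-\sigma_{\min}^2(X^HY)$ directly. The paper does not prove this itself; it quotes the result from \cite[Chapter~4, Theorem~2.2]{Stewart_Eigen} and \cite{jiastewart2001}. Your Gram-matrix identity is the $C^2+S^2=I$ content of the CS decomposition of the unitary matrix $(X,X_\perp)^H(Y,Y_\perp)$, which is the standard route to that result, so nothing further is needed.
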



It is important to note from
\cite{jiastewart2001} that, for two subspaces of unequal dimensions, the measure
$ \sin \angle(\mathcal{X}, \mathcal{Y})$ is not symmetric in the arguments $\mathcal{X}$ and
$\mathcal{Y}$, i.e.,  $\sin \angle(\mathcal{X}, \mathcal{Y})\not= \sin \angle(\mathcal{Y}, \mathcal{X})$
generally. In fact, for $\dim \mathcal{X} > \dim \mathcal{Y} > 0$, we have
$\sin \angle(\mathcal{Y}, \mathcal{X})\equiv 1$ but
$\sin \angle(\mathcal{X}, \mathcal{Y})<1$ provided that no vector in $\mathcal{Y}$ is orthogonal to
$\mathcal{X}$. In contrast, by definition, $\cos \angle(\mathcal{X}, \mathcal{Y})= \cos \angle(\mathcal{Y}, \mathcal{X})$ always, independently of $\dim\mathcal{X}$ and $\dim\mathcal{Y}$.


\begin{lemma} \label{lem: norm bounds of S}
  For $S = (S_0, \dots, S_{M-1}) \in \Complex^{n \times M\ell}$ with $S_j \in \Complex^{n \times \ell}$, we have
  \begin{equation}\label{lowupbound}
    \max_{0 \leq j \leq M-1} \norm{S_j} \leq \norm{S}
    \leq \Big( \sum_{j=0}^{M-1}\norm{S_j}^2 \Big)^{1/2}.
  \end{equation}
\end{lemma}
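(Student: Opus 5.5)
The lower bound follows from the fact that each block $S_j$ is a column submatrix of $S$. Write $S_j = S E_j$, where $E_j \in \Complex^{M\ell \times \ell}$ is the $j$-th block column of the identity $I_{M\ell}$, so that $E_j$ is column orthonormal and $\norm{E_j}=1$. Submultiplicativity then gives $\norm{S_j}\leq \norm{S}\norm{E_j}=\norm{S}$ for every $j$. Taking the maximum over $j=0,\dots,M-1$ yields the left inequality in \cref{lowupbound}. Equivalently, for any unit $w\in\Complex^{\ell}$, the vector $S_j w$ equals $S z$ with $z=E_jw$, and $\norm{z}=1$.

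For the upper bound, I would use the variational characterization $\norm{S}=\max_{\norm{z}=1}\norm{Sz}$. Take a unit vector $z\in\Complex^{M\ell}$ and partition it conformally as $z=(z_0^T,\dots,z_{M-1}^T)^T$ with $z_j\in\Complex^{\ell}$, so that $Sz=\sum_{j=0}^{M-1}S_jz_j$. The triangle inequality gives
\begin{displaymath}
  \norm{Sz}\leq \sum_{j=0}^{M-1}\norm{S_j}\norm{z_j}.
\end{displaymath}
The Cauchy--Schwarz inequality in $\Real^M$, applied to the vectors $(\norm{S_j})_j$ and $(\norm{z_j})_j$, then gives
\begin{displaymath}
  \sum_{j=0}^{M-1}\norm{S_j}\norm{z_j}\leq \Big(\sum_{j=0}^{M-1}\norm{S_j}^2\Big)^{1/2}\Big(\sum_{j=0}^{M-1}\norm{z_j}^2\Big)^{1/2},
\end{displaymath}
and the last factor equals $\norm{z}=1$. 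Maximizing over $z$ gives the right inequality in \cref{lowupbound}.

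An alternative route to the upper bound uses $\norm{S}^2=\norm{SS^H}=\norm{\sum_j S_jS_j^H}$. Since each $S_jS_j^H$ is Hermitian positive semidefinite, the triangle inequality gives $\norm{\sum_j S_jS_j^H}\leq\sum_j\norm{S_j}^2$.

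Both bounds are elementary, and no step poses a real obstacle. The only point requiring care is the partition-and-Cauchy--Schwarz step, which ensures the upper bound is the $\ell_2$-sum of the block norms rather than the cruder sum $\sum_j\norm{S_j}$.
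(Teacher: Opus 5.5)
Your proof is correct and takes essentially the same route as the paper. For the lower bound, your $S_j = SE_j$ is the matrix form of the paper's block test vector $(0,\dots,y_j^H,\dots,0)^H$. For the upper bound, the paper likewise applies the triangle inequality and then Cauchy--Schwarz to the block norms. Your alternative via $\norm{S}^2=\norm{\sum_j S_jS_j^H}\leq\sum_j\norm{S_j}^2$ is also valid; positive semidefiniteness is not needed there, only $\norm{S_jS_j^H}=\norm{S_j}^2$. It is the same device the paper later uses in the proof of \cref{idealkappa}.
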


\begin{proof}
  Let $y_j \in \Complex^\ell \setminus \{0\}$ satisfy $\norm{S_j y_j} = \norm{S_j} \norm{y_j}$ for $j = 0, 1, \dots, M-1$.
  Then
  \begin{displaymath}
    \norm{S}
    \geq \max_{0\leq j \leq M-1} \frac{ \norm{S \cdot (0,\dots,y_j^H,\dots,0)^H} }{ \norm{(0,\dots,y_j^H,\dots,0)^H} }
    = \max_{0\leq j \leq M-1} \frac{ \norm{S_jy_j} }{ \norm{y_j} }
    = \max_{0\leq j \leq M-1} \norm{S_j},
    \end{displaymath}
  which establishes the lower bound in \eqref{lowupbound}.

  For any $u = (u_0^H, \dots, u_{M-1}^H)^H \in \Complex^{M\ell} \setminus \{ 0 \}$ with $u_j \in \Complex^\ell$, we have
  \begin{align*}
    \norm{S u}^2
    &= \norm{ \sum_{j=0}^{M-1} S_j u_j }^2
    \leq \Big( \sum_{j=0}^{M-1} \norm{S_j u_j} \Big)^2
    \leq \Big( \sum_{j=0}^{M-1} \norm{S_j} \norm{u_j} \Big)^2 \\
    &\leq \Big( \sum_{j=0}^{M-1} \norm{S_j}^2 \Big) \Big( \sum_{j=0}^{M-1} \norm{u_j}^2 \Big)
    = \Big( \sum_{j=0}^{M-1} \norm{S_j}^2 \Big) \norm{u}^2,
  \end{align*}
  from which the upper bound in \eqref{lowupbound} follows.
\end{proof}

We now present one of the main results in this subsection.

\begin{theorem} \label{thm: ratio of singular values of S^(k) for SI-like}
  Let the CJ series degree $d$ satisfy \cref{eq: theoretical requirement on degree},
  and label the eigenvalues $\gamma_i$ of $P$ as in \cref{eq: eigenvalues of F_d(1)(A)}.
  Assume that
  \begin{displaymath}
    1 < n_{ev} \leq M \ell, \quad \text{and} \quad \gamma_1 > \gamma_{n_{ev}}.
  \end{displaymath}
  For a column orthonormal matrix $V^{(0)} \in \Complex^{n \times \ell}$,
  denote by $\mathcal{V}^{(0)}={\rm span}\{V^{(0)}\}$,
  suppose $x_1^H V^{(0)} \neq 0$, and
  let $S^{(k)}$ in \eqref{eq: def: k-th search subspace S^(k)} be generated by $V^{(k)}$
  in \cref{eq: def: SI restart}.
  Then
  \begin{equation} \label{eq: ratio of singular values of S^(k) for SI-like}
    \frac{\sigma_1(S^{(k)})}{\sigma_{n_{ev}}(S^{(k)})}
    \geq \frac{\gamma_1}{\sqrt{M}} \left( \frac{\gamma_1}{\gamma_{n_{ev}}} \right)^k
    \frac{\cos\angle(\mathcal{V}^{(0)},x_1)}{\sin\angle(x_1, \mathcal{V}^{(0)})},\ k\geq 1.
  \end{equation}
\end{theorem}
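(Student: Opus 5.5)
We bound $\sigma_1(S^{(k)})$ from below and $\sigma_{n_{ev}}(S^{(k)})$ from above separately, then divide. With the naive restart \cref{eq: def: SI restart}, $V^{(k)}=P^kV^{(0)}$, so $S^{(k)}_j=F_d(p_j)(A)P^kV^{(0)}$. All the matrices $F_d(p_j)(A)$ and $P$ are functions of $A$, so they commute and share the eigenvectors $X$.

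\emph{Lower bound for $\sigma_1$.} By \cref{lem: norm bounds of S}, $\sigma_1(S^{(k)})=\norm{S^{(k)}}\ge\norm{S_0^{(k)}}=\norm{P^{k+1}V^{(0)}}$. Since $\norm{x_1}=1$,
\begin{displaymath}
\norm{P^{k+1}V^{(0)}}\ge\norm{x_1^HP^{k+1}V^{(0)}}=\gamma_1^{k+1}\norm{x_1^HV^{(0)}}.
\end{displaymath}
By \cref{def: canonical angles between subspaces}, $\norm{x_1^HV^{(0)}}=\cos\angle(\mathcal{V}^{(0)},x_1)$, because the single singular value of the row vector $x_1^HV^{(0)}$ is its norm. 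Hence $\sigma_1(S^{(k)})\ge\gamma_1^{k+1}\cos\angle(\mathcal{V}^{(0)},x_1)$.

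\emph{Upper bound for $\sigma_{n_{ev}}$.} Write $S^{(k)}=x_1x_1^HS^{(k)}+X_{1,\perp}X_{1,\perp}^HS^{(k)}$. The first term has rank at most one, so by Weyl's inequality for singular values,
\begin{displaymath}
\sigma_{n_{ev}}(S^{(k)})\le\sigma_{n_{ev}-1}(X_{1,\perp}X_{1,\perp}^HS^{(k)})\le\sigma_{n_{ev}-1}(X_{1,\perp}^HS^{(k)}).
\end{displaymath}
This is where $n_{ev}>1$ is used. Next split the rows of $X_{1,\perp}^HS^{(k)}$ into the block indexed by $2,\dots,n_{ev}-1$ (at most $n_{ev}-2$ rows) and the block indexed by $n_{ev},\dots,n$. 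Dropping the first block, whose rank is at most $n_{ev}-2$, gives
\begin{displaymath}
\sigma_{n_{ev}-1}(X_{1,\perp}^HS^{(k)})\le\norm{X_{n_{ev}-1,\perp}^HS^{(k)}}.
\end{displaymath}
The $j$-th block of this matrix is $F_d(p_j)(\Lambda_{n_{ev}-1,\perp})\Gamma_{n_{ev}-1,\perp}^kX_{n_{ev}-1,\perp}^HV^{(0)}$. Its three factors are bounded as follows:
\begin{itemize}
\item[(a)] by \cref{thm: bounds of F_d(p)(t)} and $|p_j|\le1$ on $[a,b]$, $\norm{F_d(p_j)(\Lambda_{n_{ev}-1,\perp})}\le1$;
\item[(b)] by \cref{eq: eigenvalues of F_d(1)(A)}, $\norm{\Gamma_{n_{ev}-1,\perp}^k}\le\gamma_{n_{ev}}^k$, since every $\gamma_i$ with $i\ge n_{ev}$ lies in $[0,\gamma_{n_{ev}}]$;
\item[(c)] $\norm{X_{n_{ev}-1,\perp}^HV^{(0)}}\le\norm{X_{1,\perp}^HV^{(0)}}$.
\end{itemize}
For (c), \cref{prop: sin angle between subspaces} with $\mathcal{X}=\mathcal{V}^{(0)}$ and $\mathcal{Y}={\rm span}\{x_1\}$ gives $\sin\angle(x_1,\mathcal{V}^{(0)})=\norm{(I-V^{(0)}V^{(0)H})x_1}$, and we need $\norm{X_{1,\perp}^HV^{(0)}}$ to equal this. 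Both quantities equal $\sqrt{1-\norm{x_1^HV^{(0)}}^2}$ provided $\ell\ge2$, i.e., provided the nonzero singular values of the two projectors' products match. This matching is the one point to check carefully, possibly with an $\ell=1$ side case. Combining (a)--(c) with \cref{lem: norm bounds of S} yields $\sigma_{n_{ev}}(S^{(k)})\le\sqrt{M}\,\gamma_{n_{ev}}^k\sin\angle(x_1,\mathcal{V}^{(0)})$.

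Dividing the two bounds gives exactly \eqref{eq: ratio of singular values of S^(k) for SI-like}; $\sin\angle(x_1,\mathcal{V}^{(0)})>0$ is needed for the division (otherwise the claim is trivial). The main obstacles are two. The first is the interlacing bookkeeping: correctly removing both the rank-one $x_1$ part and the $n_{ev}-2$ intermediate eigendirections without losing a factor. The second is the sine identity in (c) for unequal dimensions, noting the asymmetry remark after \cref{prop: sin angle between subspaces}. For the orthonormalized restart \cref{eq: def: SI-orth restart}, $S^{(k)}$ differs only by right multiplication by a nonsingular matrix, so the column space is the same; for that variant I would redo the argument on the orthonormal basis directly.
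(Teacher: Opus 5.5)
Apart from one point, your argument is the paper's. The lower bound $\sigma_1(S^{(k)})\ge\gamma_1^{k+1}\|x_1^HV^{(0)}\|$ is the same. Your two Weyl steps are a longhand version of the paper's single Eckart--Young step with the rank-$(n_{ev}-1)$ matrix $X_{n_{ev}-1}X_{n_{ev}-1}^HS^{(k)}$. Both give $\sigma_{n_{ev}}(S^{(k)})\le\|X_{n_{ev}-1,\perp}^HS^{(k)}\|$, so the bookkeeping you worry about is fine. Your estimates (a)--(c) are exactly the paper's.

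The step that fails is your resolution of the sine identity in (c).

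First, you have swapped the arguments of the asymmetric measure the paper warns about. By the Proposition with $\mathcal{X}=\mathcal{V}^{(0)}$ and $\mathcal{Y}=\mathrm{span}\{x_1\}$, the quantity $\|(I-V^{(0)}V^{(0)H})x_1\|$ is $\sin\angle(\mathcal{V}^{(0)},x_1)$, not $\sin\angle(x_1,\mathcal{V}^{(0)})$.

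Second, the matching you assert is backwards. We have $\|X_{1,\perp}^HV^{(0)}\|^2=\lambda_{\max}(I_\ell-V^{(0)H}x_1x_1^HV^{(0)})$. This equals $1-\|x_1^HV^{(0)}\|^2$ only for $\ell=1$. For $\ell\ge 2$ the rank-one term leaves an eigenvalue $1$, so $\|X_{1,\perp}^HV^{(0)}\|=1$; equivalently, $\mathcal{V}^{(0)}$ contains a unit vector orthogonal to $x_1$.

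So for $\ell\ge2$ your chain does not yield $\sigma_{n_{ev}}(S^{(k)})\le\sqrt{M}\gamma_{n_{ev}}^k\sqrt{1-\|x_1^HV^{(0)}\|^2}$, and that bound is in fact false. Take $V^{(0)}=(x_1,x_2)$ and $n_{ev}=2$. The right-hand side is $0$. But $S_0^{(k)}=(\gamma_1^{k+1}x_1,\gamma_2^{k+1}x_2)$, and $S^{(k)}S^{(k)H}\succeq S_0^{(k)}S_0^{(k)H}$, so $\sigma_2(S^{(k)})\ge\gamma_2^{k+1}>0$.

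The repair is that no matching is needed. In the paper's convention, $\sin\angle(x_1,\mathcal{V}^{(0)})$ means $\|X_{1,\perp}^HV^{(0)}\|$. This is the Proposition's formula with the complement taken of the first argument $\mathrm{span}\{x_1\}$, extended to $\dim\mathcal{X}<\dim\mathcal{Y}$ as in the comments after the Proposition. It equals $\sin\angle(\mathcal{V}^{(0)},x_1)$ when $\ell=1$ and is identically $1$ when $\ell>1$, as the remark following the theorem records.

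With this reading, your inequality $\sigma_{n_{ev}}(S^{(k)})\le\sqrt{M}\gamma_{n_{ev}}^k\|X_{1,\perp}^HV^{(0)}\|$ is already the required bound. The angle factor in the conclusion is then $\cot\angle(\mathcal{V}^{(0)},x_1)$ for $\ell=1$, but only $\cos\angle(\mathcal{V}^{(0)},x_1)$ for $\ell>1$.
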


\begin{proof}
For convenience, with \eqref{partition} we denote
\begin{equation} \label{eq: def: tilde_perp}
  \widetilde{X}_\perp = X_{n_{ev}-1,\perp}, \quad \widetilde{\Lambda}_\perp = \Lambda_{n_{ev}-1,\perp}, \quad \widetilde{\Gamma}_\perp = \Gamma_{n_{ev}-1,\perp}.
\end{equation}
It is known from the Eckart--Young theorem (cf. \cite[Theorem 2.4.8]{Golub_Matrix}) that
\begin{displaymath}
  \sigma_{n_{ev}}(S^{(k)}) = \min_{\Rank(T) < n_{ev}} \norm{ S^{(k)} - T }.
\end{displaymath}
Take $T = X_{n_{ev}-1} X_{n_{ev}-1}^H S^{(k)}$.
Then
\begin{equation} \label{eq: original upper bound of n_ev-th singular value of S^(k)}
    \sigma_{n_{ev}}(S^{(k)})
    \leq \norm{ S^{(k)} - X_{n_{ev}-1} X_{n_{ev}-1}^H S^{(k)} }
    = \norm{ \widetilde{X}_\perp \widetilde{X}_\perp^H S^{(k)} }
    = \norm{ \widetilde{X}_\perp^H S^{(k)} }.
\end{equation}
From \cref{eq: def: k-th search subspace S^(k)}, \cref{lem: norm bounds of S}
and $V^{(k)}=P^kV^{(0)}$, we obtain
\begin{equation} \label{eq: upper bound of norm of X_perp^H S^(k)}
  \begin{aligned}
    \norm{\widetilde{X}_\perp^H S^{(k)}}^2
    & = \norm{ \widetilde{X}_\perp^H (F_d(p_0)(A) P^k V^{(0)}, \dots, F_d(p_{M-1})(A) P^k V^{(0)}) }^2 \\
    &\leq \sum_{j=0}^{M-1} \norm{ F_d(p_j)(\widetilde{\Lambda}_\perp) \widetilde{\Gamma}_\perp^k \widetilde{X}_\perp^H V^{(0)} }^2 \\
    &\leq \gamma_{n_{ev}}^{2k} \norm{\widetilde{X}_\perp^H V^{(0)}}^2 \sum_{j=0}^{M-1} \norm{ F_d(p_j)(\widetilde{\Lambda}_\perp) }^2.
  \end{aligned}
\end{equation}
By \cref{thm: bounds of F_d(p)(t)} and \cref{eq: def: p_j(t)}, for $j = 0, 1, \dots, M-1$, we have
\begin{displaymath}
  \norm{ F_d(p_j)(\widetilde{\Lambda}_\perp) }
  \leq \max_{t\in [-1, 1]}|F_d(p_j)(t)|
  \leq \max_{t\in [a, b]}|p_j(t)| = 1.
\end{displaymath}
Therefore,
\begin{equation} \label{eq: sum of norms of F_d(p_j)(Lambda_perp)}
  \sum_{j=0}^{M-1} \norm{ F_d(p_j)(\widetilde{\Lambda}_\perp) }^2 \leq M.
\end{equation}
Notice that $\widetilde{X}_\perp$ is a subset of columns of $X_{1,\perp}$.
Substituting \cref{eq: upper bound of norm of X_perp^H S^(k),eq: sum of norms of F_d(p_j)(Lambda_perp)} into \cref{eq: original upper bound of n_ev-th singular value of S^(k)} yields
\begin{displaymath}
  \sigma_{n_{ev}}(S^{(k)})
  \leq \gamma_{n_{ev}}^k \sqrt{M} \norm{\widetilde{X}_\perp^H V^{(0)}}
  \leq \gamma_{n_{ev}}^k \sqrt{M} \norm{X_{1,\perp}^H V^{(0)}}=\gamma_{n_{ev}}^k \sqrt{M}\sin\angle(x_1, \mathcal{V}^{(0)}).
\end{displaymath}
On the other hand, by \cref{lem: norm bounds of S} and $P=F_d(1)(A)=F_d(p_0)(A)$, we have
\begin{displaymath}
  \sigma_1(S^{(k)})
  \geq \norm{ x_1^H S^{(k)} }
  \geq \norm{ x_1^H P^{k+1} V^{(0)} }
  = \gamma_1^{k+1} \norm{x_1^H V^{(0)}}= \gamma_1^{k+1} \cos\angle(\mathcal{V}^{(0)},x_1).
\end{displaymath}
Combining the above two relations establishes \cref{eq: ratio of singular values of S^(k) for SI-like}.
\end{proof}



\begin{remark}
For the lower bound \eqref{eq: ratio of singular values of S^(k) for SI-like},
we comment that, by \cref{prop: sin angle between subspaces} and the comments that follow, it holds that
$$
\sin\angle(x_1,\mathcal{V}^{(0)}) =\sin\angle(\mathcal{V}^{(0)},x_1)
$$ for $\ell=1$ and $\sin\angle(x_1,\mathcal{V}^{(0)})\equiv 1$ for $\ell>1$, so that
$\frac{\cos\angle(\mathcal{V}^{(0)},x_1)}{\sin\angle(x_1, \mathcal{V}^{(0)})}=\cot\angle(\mathcal{V}^{(0)},x_1)$ for $\ell=1$ and $\frac{\cos\angle(\mathcal{V}^{(0)},x_1)}{\sin\angle(x_1, \mathcal{V}^{(0)})}=\cos\angle(\mathcal{V}^{(0)},x_1)$ for $\ell>1$.
\end{remark}

Below we present a smaller lower bound for the ratio
$\sigma_1(S^{(k)}) / \sigma_{n_{ev}}(S^{(k)})$ obtained by
the natural restart \cref{eq: def: SI-orth restart}, showing that it tends to infinity
as $k$ increases.

\begin{theorem} \label{thm: ratio of singular values of S^(k) for SI-orth}
  Let the CJ series degree $d$ satisfy \cref{eq: theoretical requirement on degree},
  and label the eigenvalues $\gamma_i$ of $P$ as in \cref{eq: eigenvalues of F_d(1)(A)},
  and assume that
  \begin{displaymath}
    1 \leq \ell < n_{ev} \leq M \ell, \quad \text{and} \quad \gamma_{\ell} > \gamma_{n_{ev}}.
  \end{displaymath}
  For a column orthonormal matrix $V^{(0)} \in \Complex^{n \times \ell}$,
  suppose that $X_\ell^H V^{(0)}$ is nonsingular with $X_\ell$ defined by \eqref{partition},
  and write $\mathcal{X}_\ell = \Range(X_\ell)$ and $\mathcal{V}^{(0)} = \Range(V^{(0)})$.
 Let $S^{(k)}$ in \cref{eq: def: k-th search subspace S^(k)} be generated by $V^{(k)}$
 in \eqref{eq: def: SI-orth restart} with $k$ replaced by $k-1$.
  Then
  \begin{equation} \label{eq: ratio of singular values of S^(k) for SI-orth}
    \frac{\sigma_1(S^{(k)})}{\sigma_{n_{ev}}(S^{(k)})}
    \geq \frac{\gamma_\ell}{\sqrt{M}}
      \left( \frac{\gamma_\ell}{\gamma_{n_{ev}}} \right)^k
      \cot \angle(\mathcal{V}^{(0)},\mathcal{X}_\ell)\cos \angle(\mathcal{V}^{(0)},\mathcal{X}_\ell),\ k\geq 1.
  \end{equation}
\end{theorem}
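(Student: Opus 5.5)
The plan is to mirror the proof of \cref{thm: ratio of singular values of S^(k) for SI-like}, tracking the accumulated triangular factors. Unrolling \eqref{eq: def: SI-orth restart} gives
\[
V^{(k)} = P^k V^{(0)} R_k^{-1},
\]
where $R_k$ is the product of the $k$ nonsingular triangular factors. Hence
\[
S^{(k)} = \left(F_d(p_0)(A)P^kV^{(0)},\dots,F_d(p_{M-1})(A)P^kV^{(0)}\right)(I\otimes R_k^{-1}).
\]
Full column rank of each $PV^{(k)}$ follows from nonsingularity of $X_\ell^HV^{(0)}$ together with $\gamma_\ell>\gamma_{n_{ev}}>1/4>0$. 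Indeed, $X_\ell^H P^{k}V^{(0)}=\Gamma_\ell^{k}X_\ell^HV^{(0)}$ is nonsingular, so the QR steps are well defined. The idea is that the unknown factor $\norm{R_k^{-1}}$ appears in both the upper bound for $\sigma_{n_{ev}}$ and the lower bound for $\sigma_1$, and therefore cancels.

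\emph{Upper bound for $\sigma_{n_{ev}}$.} As in \cref{eq: original upper bound of n_ev-th singular value of S^(k)}, I use $\sigma_{n_{ev}}(S^{(k)})\le\norm{\widetilde X_\perp^HS^{(k)}}$, with $\widetilde X_\perp$ as in \cref{eq: def: tilde_perp}. Applying \cref{lem: norm bounds of S}, \cref{thm: bounds of F_d(p)(t)} and \cref{eq: eigenvalues of F_d(1)(A)} gives
\[
\sigma_{n_{ev}}(S^{(k)})\le \sqrt M\,\gamma_{n_{ev}}^k\,\norm{\widetilde X_\perp^HV^{(0)}}\,\norm{R_k^{-1}}.
\]
Since $\ell\le n_{ev}-1$, the columns of $\widetilde X_\perp$ are a subset of those of $X_{\ell,\perp}$. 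By \cref{prop: sin angle between subspaces}, and since the dimensions are equal (so the sine is symmetric), $\norm{\widetilde X_\perp^HV^{(0)}}\le\norm{X_{\ell,\perp}^HV^{(0)}}=\sin\angle(\mathcal V^{(0)},\mathcal X_\ell)$.

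\emph{Lower bound for $\sigma_1$.} By \cref{lem: norm bounds of S},
\[
\sigma_1(S^{(k)})\ge\norm{X_\ell^HPV^{(k)}}=\norm{\Gamma_\ell^{k+1}X_\ell^HV^{(0)}R_k^{-1}}.
\]
The key observation is that $\Gamma_\ell^{k+1}X_\ell^HV^{(0)}$ is a square $\ell\times\ell$ matrix with smallest singular value at least $\gamma_\ell^{k+1}\sigma_{\min}(X_\ell^HV^{(0)})=\gamma_\ell^{k+1}\cos\angle(\mathcal V^{(0)},\mathcal X_\ell)$. Using $\norm{BC}\ge\sigma_{\min}(B)\norm{C}$ for square $B$, this yields
\[
\sigma_1(S^{(k)})\ge\gamma_\ell^{k+1}\cos\angle(\mathcal V^{(0)},\mathcal X_\ell)\,\norm{R_k^{-1}}.
\]

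Dividing the two bounds cancels $\norm{R_k^{-1}}$ and gives
\[
\frac{\sigma_1(S^{(k)})}{\sigma_{n_{ev}}(S^{(k)})}\ge\frac{\gamma_\ell}{\sqrt M}\left(\frac{\gamma_\ell}{\gamma_{n_{ev}}}\right)^k\cot\angle(\mathcal V^{(0)},\mathcal X_\ell).
\]
Since $\cos\angle\le1$, this implies \eqref{eq: ratio of singular values of S^(k) for SI-orth}.

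The main obstacle I expect is controlling $R_k^{-1}$. If the cancellation above fails, for instance when bounding via $\sigma_{\min}$ is not clean, the fallback is to bound $\norm{R_k^{-1}}$ explicitly. From $R_k^{-1}=(X_\ell^HV^{(0)})^{-1}\Gamma_\ell^{-k}X_\ell^HV^{(k)}$ and $\norm{V^{(k)}}=1$ one gets $\norm{R_k^{-1}}\le\gamma_\ell^{-k}/\cos\angle(\mathcal V^{(0)},\mathcal X_\ell)$. Then one lower-bounds $\sigma_1$ separately by $\gamma_\ell\,\norm{X_\ell^HV^{(k)}}$ with $\norm{X_\ell^HV^{(k)}}\ge\cos\angle(\mathcal V^{(0)},\mathcal X_\ell)$. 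That route is the one that naturally produces the weaker $\cot\cdot\cos$ factor in the statement.
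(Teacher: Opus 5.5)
Your proof is correct, and it takes a different route from the paper's. You keep the accumulated triangular factor explicit, writing $S^{(k)}$ as the naive-restart matrix times $I\otimes R_k^{-1}$. Then $\norm{R_k^{-1}}$ cancels between your two bounds:
\begin{itemize}
\item the upper bound $\sigma_{n_{ev}}(S^{(k)})\le\sqrt{M}\,\gamma_{n_{ev}}^k\sin\angle(\mathcal{V}^{(0)},\mathcal{X}_\ell)\,\norm{R_k^{-1}}$;
\item the lower bound $\sigma_1(S^{(k)})\ge\gamma_\ell^{k+1}\cos\angle(\mathcal{V}^{(0)},\mathcal{X}_\ell)\,\norm{R_k^{-1}}$, which works because $\Gamma_\ell^{k+1}X_\ell^HV^{(0)}$ is square, so $\norm{BC}\ge\sigma_{\min}(B)\norm{C}$ applies.
\end{itemize}
Both bounds check out.

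The paper never introduces $R_k$. It works with the orthonormal $V^{(k)}$ directly, normalizing $P^kV^{(0)}$ by $\Psi_k^{-1}=(\Gamma_\ell^kX_\ell^HV^{(0)})^{-1}$. Tangent-of-angle arguments then give $\norm{\widetilde{X}_\perp^HV^{(k)}}\le(\gamma_{n_{ev}}/\gamma_\ell)^k\tan\angle(\mathcal{V}^{(0)},\mathcal{X}_\ell)$ and $\norm{X_\ell^HV^{(k)}}\ge\cos\angle(\mathcal{V}^{(0)},\mathcal{X}_\ell)$. This is in effect your fallback: your estimate $\norm{R_k^{-1}}\le\gamma_\ell^{-k}/\cos\angle(\mathcal{V}^{(0)},\mathcal{X}_\ell)$ reproduces the paper's $\tan$ factor exactly. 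Because the paper bounds $\sigma_{n_{ev}}$ and $\sigma_1$ independently, each in its worst case, the normalization is paid for in one bound but not recovered in the other. That costs a factor $\cos\angle(\mathcal{V}^{(0)},\mathcal{X}_\ell)$.

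Your route buys a sharper result, $\frac{\gamma_\ell}{\sqrt{M}}(\gamma_\ell/\gamma_{n_{ev}})^k\cot\angle(\mathcal{V}^{(0)},\mathcal{X}_\ell)$. For $\ell=1$ it coincides exactly with the bound of the naive-restart theorem, as it should: the two restarts then produce matrices $S^{(k)}$ that differ only by a positive scalar, so the ratios are identical.

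The paper's route buys separate, normalization-free statements: $\sigma_{n_{ev}}(S^{(k)})\to0$ geometrically, and $\sigma_1(S^{(k)})\ge\gamma_\ell\cos\angle(\mathcal{V}^{(0)},\mathcal{X}_\ell)$ uniformly in $k$. Your individual bounds contain the unknown $\norm{R_k^{-1}}$ and control only the ratio, which is all the theorem asserts.

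One caution on the fallback, which you do not need: the claim $\norm{X_\ell^HV^{(k)}}\ge\cos\angle(\mathcal{V}^{(0)},\mathcal{X}_\ell)$ does not follow from the $R_k$ bookkeeping alone. It requires the paper's tangent argument.
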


\begin{proof}
We first establish an upper bound for $\sigma_{n_{ev}}(S^{(k)})$, showing how fast it
tends to zero at least as $k$ increases.
With \eqref{partition}, write $\widetilde{X}_\perp$, $\widetilde{\Lambda}_\perp$ and $\widetilde{\Gamma}_\perp$ as in \cref{eq: def: tilde_perp}.
Then we can decompose $V^{(0)}$ into mutually orthogonal sums:
\begin{align*}
  V^{(0)} &= X_{n_{ev}-1} (X_{n_{ev}-1}^H V^{(0)}) + \widetilde{X}_\perp (\widetilde{X}_\perp^H V^{(0)})\\
 & =: X_\ell (X_\ell^H V^{(0)}) + W+\widetilde{X}_\perp (\widetilde{X}_\perp^H V^{(0)}),
\end{align*}
where $W^H X_{\ell}=0$ and $W^H\widetilde{X}_\perp=0$.
Therefore,
\begin{equation} \label{eq: SI-orth: expression of P^k V}
  P^k V^{(0)} = X_\ell \Gamma_\ell^k (X_\ell^H V^{(0)}) + \widetilde{X}_\perp \widetilde{\Gamma}_\perp^k (\widetilde{X}_\perp^H V^{(0)}) + P^k W.
\end{equation}
Let
\begin{equation} \label{eq: SI-orth: def: Omega_k}
  \Psi_k = \Gamma_\ell^k (X_\ell^H V^{(0)}), \quad
  \Omega_k = \widetilde{\Gamma}_\perp^k (\widetilde{X}_\perp^H V^{(0)}) \Psi_k^{-1}.
\end{equation}
Then we can write \cref{eq: SI-orth: expression of P^k V} as
\begin{equation} \label{eq: SI-orth: expression of P^k V Psi_k^{-1}}
  P^k V^{(0)} \Psi_k^{-1} = X_\ell + \widetilde{X}_\perp \Omega_k + P^k W \Psi_k^{-1},
\end{equation}
where the three terms in the right-hand side are mutually orthogonal since $P$ and $A$
share the same eigenvectors.

Since the columns of $V^{(k)}$ are an orthonormal basis of $P^k \mathcal{V}^{(0)}$, we obtain
\begin{equation} \label{eq: SI-orth: norm of X_perp^H V^(k)}
  \begin{aligned}
    \norm{\widetilde{X}_\perp^H V^{(k)}}
    &= \max_{y \in \Complex^\ell, \norm{y} = 1} \norm{\widetilde{X}_\perp^H V^{(k)} y}
    = \max_{u \in P^k \mathcal{V}^{(0)}, \norm{u} = 1} \norm{\widetilde{X}_\perp^H u} \\
    &= \max_{u \in P^k \mathcal{V}^{(0)}, u \neq 0} \sin \angle(\Range(X_{n_{ev}-1}), u).
  \end{aligned}
\end{equation}
Write $u = P^k V^{(0)} \Psi_k^{-1} z$.
Then by \eqref{eq: SI-orth: def: Omega_k}, \cref{eq: SI-orth: expression of P^k V Psi_k^{-1}} and $X_{n_{ev}-1}^H\widetilde{X}_\perp=0$, we have
\begin{displaymath}
  |\tan \angle(\Range(X_{n_{ev}-1}), u)|
  = \frac{\norm{\widetilde{X}_\perp^H u}}{\norm{X_{n_{ev}-1}^H u}}
  = \frac{\norm{\Omega_k z}}{\norm{X_{n_{ev}-1}^H (X_\ell + P^k W \Psi_k^{-1}) z}}.
\end{displaymath}
Since $X_\ell^H P^kW=0$ and $\mathcal{X}_\ell\oplus \Range(P^kW) \subseteq \Range(X_{n_{ev}-1})$, we obtain
\begin{align*}
  \norm{X_{n_{ev}-1}^H (X_\ell + P^k W \Psi_k^{-1}) z}^2
  &= \norm{X_\ell z + P^k W \Psi_k^{-1} z}^2 \\
  &= \norm{X_\ell z}^2 + \norm{P^k W \Psi_k^{-1} z}^2
  \geq \norm{X_\ell z}^2 = \norm{z}^2.
\end{align*}
The above two relations give \begin{displaymath}
  \sin \angle(\Range(X_{n_{ev}-1}), u)
  \leq |\tan \angle(\Range(X_{n_{ev}-1}), u)|
  \leq \frac{\norm{\Omega_k z}}{\norm{z}}
  \leq \norm{\Omega_k}.
\end{displaymath}
According to the definition of $\Omega_k$ in \cref{eq: SI-orth: def: Omega_k}, we have
\begin{displaymath}
  \norm{ \Omega_k }
  \leq \frac{ \norm{\widetilde{\Gamma}_\perp^k} \norm{\widetilde{X}_\perp^H V^{(0)}} }{ \sigma_{\min}(\Psi_k) }
  = \frac{ \gamma_{n_{ev}}^k \norm{\widetilde{X}_\perp^H V^{(0)}} }{ \sigma_{\min}(\Gamma_\ell^k (X_\ell^H V^{(0)})) }
  \leq \frac{ \gamma_{n_{ev}}^k \norm{X_{\ell,\perp}^H V^{(0)}} }
  { \sigma_{\min}(\Gamma_\ell^k (X_\ell^H V^{(0)})) },
\end{displaymath}
where the last inequality is due to $\Range( \widetilde{X}_\perp ) \subseteq \Range( X_{\ell,\perp} )$.
Moreover, since $X_\ell^H V^{(0)}$ and $\Gamma_\ell$ are nonsingular, it holds that
\begin{displaymath}
  \sigma_{\min}(\Gamma_\ell^k (X_\ell^H V^{(0)})) \geq \sigma_{\min}(\Gamma_\ell^k) \cdot \sigma_{\min}(X_\ell^H V^{(0)}) = \gamma_\ell^k \cdot \sigma_{\min}(X_\ell^H V^{(0)}).
\end{displaymath}
Combining the above three relations with \cref{eq: SI-orth: norm of X_perp^H V^(k)}, we obtain
\begin{displaymath}
  \norm{\widetilde{X}_\perp^H V^{(k)}}
   \leq \norm{\Omega_k}
  \leq \left( \frac{\gamma_{n_{ev}}}{\gamma_\ell} \right)^k \frac{ \norm{X_{\ell,\perp}^H V^{(0)}} }{ \sigma_{\min}(X_\ell^H V^{(0)}) }
  = \left( \frac{\gamma_{n_{ev}}}{\gamma_\ell} \right)^k \tan \angle(\mathcal{V}^{(0)},\mathcal{X}_\ell),
\end{displaymath}
where the last equality follows from \cref{def: canonical angles between subspaces} and \cref{prop: sin angle between subspaces}.

According to the above relation and \cref{lem: norm bounds of S}, we get
\begin{align*}
  \norm{ \widetilde{X}_\perp^H S^{(k)} }^2
  &\leq \norm{ \widetilde{X}_\perp^H (F_d(p_0)(A) V^{(k)}, \dots, F_d(p_{M-1})(A) V^{(k)}) }^2 \\
  &\leq \sum_{j = 0}^{M-1} \norm{ F_d(p_j)(\widetilde{\Lambda}_\perp) \widetilde{X}_\perp^H V^{(k)} }^2 \\
  &\leq \left( \frac{\gamma_{n_{ev}}}{\gamma_\ell} \right)^{2k} \tan^2 \angle(\mathcal{X}_\ell, \mathcal{V}^{(0)}) \sum_{j = 0}^{M-1} \norm{ F_d(p_j)(\widetilde{\Lambda}_\perp) }^2.
\end{align*}
Combining the above relation with \cref{eq: original upper bound of n_ev-th singular value of S^(k),eq: sum of norms of F_d(p_j)(Lambda_perp)} yields
\begin{equation} \label{eq: SI-orth: final upper bound of n_ev-th singular value of S^(k)}
  \sigma_{n_{ev}}(S^{(k)})
  \leq \left( \frac{\gamma_{n_{ev}}}{\gamma_\ell} \right)^k \sqrt{M} \tan \angle(\mathcal{V}^{(0)},\mathcal{X}_\ell)\rightarrow 0.
\end{equation}

Next we establish a lower bound for $\sigma_1(S^{(k)})$. Decompose
\begin{displaymath}
  V^{(0)} = X_\ell (X_\ell^H V^{(0)}) + X_{\ell,\perp} (X_{\ell,\perp}^H V^{(0)}).
\end{displaymath}
Then similarly to \cref{eq: SI-orth: expression of P^k V} and \cref{eq: SI-orth: norm of X_perp^H V^(k)}, we have
\begin{align}
  &P^k V^{(0)}
  = X_\ell \Gamma_\ell^k (X_\ell^H V^{(0)}) + X_{\ell,\perp} \Gamma_{\ell,\perp}^k (X_{\ell,\perp}^H V^{(0)}), \nonumber\\
  &\norm{X_\ell^H V^{(k)}}
  = \max_{u \in P^k \mathcal{V}^{(0)}, \norm{u} = 1} \norm{X_\ell^H u}
  = \max_{u \in P^k \mathcal{V}^{(0)}, u \neq 0} |\cos \angle(\mathcal{X}_\ell, u)|. \label{Cos}
\end{align}
Write $u = P^k V^{(0)} z$. Then
\begin{align*}
  |\tan \angle(\mathcal{X}_\ell, u)|
  &= \frac{\norm{X_{\ell,\perp}^H u}}{\norm{X_\ell^H u}}
  = \frac{\norm{\Gamma_{\ell,\perp}^k (X_{\ell,\perp}^H V^{(0)}) z}}{\norm{\Gamma_\ell^k (X_\ell^H V^{(0)}) z}}
  \leq \frac{\norm{\Gamma_{\ell,\perp}^k (X_{\ell,\perp}^H V^{(0)})}}{\sigma_{\min}(\Gamma_\ell^k (X_\ell^H V^{(0)}))} \\
  &\leq \frac{\norm{\Gamma_{\ell,\perp}^k}}{\sigma_{\min}(\Gamma_\ell^k)}
        \cdot \frac{\norm{X_{\ell,\perp}^H V^{(0)}}}{\sigma_{\min}(X_\ell^H V^{(0)})}
  = \left( \frac{\gamma_{\ell+1}}{\gamma_\ell} \right)^k \tan \angle(\mathcal{V}^{(0)},\mathcal{X}_\ell)\\
  &\leq \tan \angle(\mathcal{V}^{(0)},\mathcal{X}_\ell), 
\end{align*}
from which and \eqref{Cos} it follows that
\begin{equation}\label{lowboundk}
  \norm{X_\ell^H V^{(k)}} \geq \cos \angle(\mathcal{V}^{(0)},\mathcal{X}_\ell).
\end{equation}
Therefore, by \cref{lem: norm bounds of S},
$\norm{S^{(k)}} \geq \norm{X_\ell^H S^{(k)}}$, $ X_\ell^H P V^{(k)}=\Gamma_\ell X_\ell^H
V^{(k)}$ and \eqref{lowboundk}, we obtain
\begin{equation} \label{eq: SI-orth: final lower bound of largest singular value of S^(k)}
  \begin{aligned}
    \sigma_1(S^{(k)})
    &\geq \norm{X_\ell^H S^{(k)}}
    = \norm{ X_\ell^H (F_d(p_0)(A) V^{(k)}, \dots, F_d(p_{M-1})(A) V^{(k)}) } \\
    &\geq \norm{ \Gamma_\ell X_\ell^H V^{(k)} }
    \geq \gamma_\ell \cdot \norm{X_\ell^H V^{(k)}}
    \geq \gamma_\ell \cdot \cos \angle(\mathcal{V}^{(0)},\mathcal{X}_\ell).
  \end{aligned}
\end{equation}

Finally, bound \cref{eq: ratio of singular values of S^(k) for SI-orth} follows from \cref{eq: SI-orth: final upper bound of n_ev-th singular value of S^(k),eq: SI-orth: final lower bound of largest singular value of S^(k)}.
\end{proof}

\begin{remark}
The preceding two theorems have proved \eqref{infbound}. Under the assumptions of \cref{thm: ratio of singular values of S^(k) for SI-orth}, the lower bound in \eqref{eq: ratio of singular values of S^(k) for SI-orth} tends to infinity as $k$ increases, so does
$\frac{\sigma_1(S^{(k)})}{\sigma_{n_{ev}}(S^{(k)})}$.
Moreover,
since $\gamma_1 \geq \gamma_\ell > \gamma_{n_{ev}}$,
it is expected that
$\frac{\sigma_1(S^{(k)})}{\sigma_{n_{ev}}(S^{(k)})}$ in \eqref{eq: ratio of singular values of S^(k) for SI-orth} increases more slowly
than that in \eqref{eq: ratio of singular values of S^(k) for SI-like}
when $ \gamma_1 \neq \gamma_{\ell}$.
\end{remark}


\subsection{The combination restart}\label{section:3.2}

The results in the last subsection lead to the following consequences:
In finite precision arithmetic,
for a given dropping threshold $100\epsilon_{\rm mach}\|S^{(k)}\|_F$ with $\epsilon_{\rm mach}$
being machine precision and $\|\cdot\|_F$ being the Frobenius norm of a matrix,
the numerical rank of $S^{(k)}$ is ultimately smaller than $n_{ev}$ as $k$ increases. If we use
the TSVD or rank revealing QR decomposition of $S^{(k)}$ to obtain an effective subspace with
the dimension smaller than $n_{ev}$,
one definitely fails to find all the $n_{ev}$ desired eigenpairs.
On the other hand,
if one simply computes the QR decomposition of $S^{(k)}$, then, for sufficiently large $k$,
the {\em computed} subspace generated by
the Q-factor bears no relation to the true $\mathcal{R}(S^{(k)})$
whenever $S^{(k)}$ starts to be numerically rank deficient,
causing that the restarted algorithm may fail to obtain sufficiently accurate approximations to
some columns of
$X_{n_{ev}}$ and the residual norms of some refined Ritz pairs with the refined Ritz values in $[a,b]$
start to increase; that is, for a prescribed stopping tolerance $O(\epsilon_{\rm mach})$, the restarted
algorithm first converges but afterwards starts to diverge before it reaches $O(\epsilon_{\rm mach})$.

To fix the above intrinsic deficiencies, a fundamental cure
is to construct a well-conditioned
$S^{(k+1)}$ at restart $k+1$ that contains the information on $X_{n_{ev}}$ as much as possible.
This motivates us to seek a new
restart scheme that is both numerically
stable and makes full use of the best possible information on $X_{n_{ev}}$ that is
available at restart $k$.
A natural approach is to retain the best available approximations in $\Range(S^{(k)})$ to
the $n_{ev}$ desired
eigenvectors $x_1,\ldots,x_{n_{ev}}$ in $\Range(S^{(k+1)})$. Borrowing the restart scheme in \cite{jia_block} for block Krylov subspace methods solving eigenvalue problems,
unlike the previous two $V^{(k+1)}$ in \eqref{eq: def: SI restart} and \eqref{eq: def: SI-orth restart},
we take
\begin{equation} \label{eq: def: combination restart}
  V^{(k+1)} = \widetilde{X}^{(k)} C \in  \mathbb{C}^{n\times\ell}
\end{equation}
at restart $k+1$,
where the columns of $\widetilde{X}^{(k)}\in \mathbb{C}^{n\times n_{ev}}$ are
the currently available approximations to $X_{n_{ev}}$,
and $C\in \mathbb{R}^{n_{ev}\times \ell}$ is generated randomly from the
standard normal distribution, i.e., a Gaussian random matrix.
Since each column of $ V^{(k+1)}$ is a random linear combination of those of $\widetilde{X}^{(k)}$,
we call $V^{(k+1)}$ in \eqref{eq: def: combination restart} a \emph{combination restart}.
Obviously, one can take $\widetilde{X}^{(k)}$ to be those selected Ritz vectors in
the CJ--SS--RR method and refined Ritz vectors in the CJ--SS--RRR method.
Importantly, this restart scheme straightforwardly applies to the SS--RR and SS--RRR methods
for general standard and generalized eigenvalue problems.

In the above descriptions, we assume that the number $n_{ev}$ of desired eigenpairs is known
in advance. In practice, one has to estimate $n_{ev}$ reliably. For large problems, stochastic techniques
have been developed in \cite{CJ_FEAST_cross,CJ_FEAST_augmented}
that use the CJ series expansion to estimate $n_{ev}$ efficiently and reliably.


We notice that, when $\ell \geq n_{ev}$,
the assumptions in \cref{thm: ratio of singular values of S^(k) for SI-orth} do not hold any longer. 
In this case, we simply take the moment number $M=1$,
and the corresponding CJ--SS--RR method with the natural restart \cref{eq: def: SI-orth restart} degenerates to the numerically stable CJ-FEAST eigensolvers \cite{CJ_FEAST_cross,CJ_FEAST_augmented} for
Hermitian eigenvalue problems.
In what follows, we exclude the situation of $\ell \geq n_{ev}$, and
always assume that $\ell < n_{ev}$ and $C$ is of full column rank.

To have in-depth understanding for the combination restart \cref{eq: def: combination restart},
let us first consider the ideal case where $\widetilde{X}^{(k-1)} = X_{n_{ev}}$.
We have
\begin{equation} \label{eq: thick-like: eigenvectors as restart vectors}
  \begin{aligned}
    S^{(k)}
    &= \left( F_d(p_0)(A) X_{n_{ev}} C, \dots, F_d(p_{M-1})(A) X_{n_{ev}} C \right) \\
    &= X_{n_{ev}} \left( F_d(p_0)(\Lambda_{n_{ev}}) C, \dots, F_d(p_{M-1})(\Lambda_{n_{ev}}) C \right)
    =: X_{n_{ev}} T,
    \end{aligned}
\end{equation}
i.e., $\Range(S^{(k)})=\Range(X_{n_{ev}})$ when $T\in \mathbb{R}^{n_{ev}\times M\ell}$
is of full row rank, and
\begin{displaymath}
  \frac{\sigma_1(S^{(k)})}{\sigma_{n_{ev}}(S^{(k)})}
  = \frac{\sigma_1(T)}{\sigma_{n_{ev}}(T)}
  = \kappa(T).
\end{displaymath}
Next we establish a compact bound for $\kappa(T)$.

\begin{theorem}\label{idealkappa}
For $\widetilde{X}^{(k-1)} = X_{n_{ev}}$ and
the matrix $T$ defined in \eqref{eq: thick-like: eigenvectors as restart vectors},
let $C^T=U_c(\Sigma_c\ \ 0)(V_c,V_{c,\perp})^T$ be the full SVD of $C^T$ and $z$ be the left singular vector of $T$ corresponding to $\sigma_{\min}(T)$. Then
\begin{equation}\label{idealratio}
\frac{\sigma_1(S^{(k)})}{\sigma_{n_{ev}}(S^{(k)})}= \kappa(T)\leq \frac{\sqrt{M}\kappa(C)}{\|V_c^TF_d(p_0)(\Lambda_{n_{ev}})z\|}.
\end{equation}
\end{theorem}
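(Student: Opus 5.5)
We bound $\sigma_1(T)$ from above and $\sigma_{n_{ev}}(T)=\sigma_{\min}(T)$ from below separately. The equality $\sigma_1(S^{(k)})/\sigma_{n_{ev}}(S^{(k)})=\kappa(T)$ has already been noted before the statement, since $X_{n_{ev}}$ is column orthonormal. Write $T=(T_0,\dots,T_{M-1})$ with $T_j=F_d(p_j)(\Lambda_{n_{ev}})C$.

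For the upper bound we apply \cref{lem: norm bounds of S} to get
\[
\sigma_1(T)^2\le\sum_j\|T_j\|^2\le \|C\|^2\sum_j\|F_d(p_j)(\Lambda_{n_{ev}})\|^2 .
\]
By \cref{thm: bounds of F_d(p)(t)} and $|p_j|\le 1$ on $[a,b]$, each factor $\|F_d(p_j)(\Lambda_{n_{ev}})\|$ is at most $1$. Hence $\sigma_1(T)\le\sqrt{M}\,\sigma_{\max}(C)$.

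For the lower bound, let $z$ be the unit left singular vector for $\sigma_{\min}(T)$, so that $\sigma_{\min}(T)=\|z^TT\|$. Here $z$ is real, since $T$ is real ($C$ is real and $F_d(p_j)$ is a real polynomial evaluated on the real diagonal $\Lambda_{n_{ev}}$). The key point is that $T$ has $n_{ev}$ rows and $M\ell\ge n_{ev}$ columns, so $\sigma_{\min}(T)=\min_{\|w\|=1}\|w^TT\|$ is indeed the $n_{ev}$-th singular value. Dropping all blocks but the first gives
\[
\sigma_{\min}(T)\ge\|z^TT_0\|=\|C^TF_d(p_0)(\Lambda_{n_{ev}})z\|,
\]
using the symmetry of the diagonal matrix. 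Substituting the full SVD $C^T=U_c(\Sigma_c\ 0)(V_c,V_{c,\perp})^T$, we get $C^Tv=U_c\Sigma_cV_c^Tv$. Since $U_c$ is orthogonal, $\|C^Tv\|=\|\Sigma_cV_c^Tv\|\ge\sigma_{\min}(C)\,\|V_c^Tv\|$. Here $\sigma_{\min}(\Sigma_c)=\sigma_{\min}(C)>0$ because $C$ has full column rank $\ell$. Taking $v=F_d(p_0)(\Lambda_{n_{ev}})z$ yields
\[
\sigma_{\min}(T)\ge\sigma_{\min}(C)\,\|V_c^TF_d(p_0)(\Lambda_{n_{ev}})z\|.
\]

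Dividing the two bounds gives \eqref{idealratio} with $\kappa(C)=\sigma_{\max}(C)/\sigma_{\min}(C)$. I expect no serious obstacle; the step needing care is the correct identification of $\sigma_{n_{ev}}(T)$ with the smallest left singular value, together with the realness of $z$ so that transpose and conjugate transpose agree. If $V_c^TF_d(p_0)(\Lambda_{n_{ev}})z=0$, the bound is trivially infinite and holds vacuously.
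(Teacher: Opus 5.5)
Your proposal is correct and follows essentially the same route as the paper: an upper bound $\sigma_1(T)\le\sqrt{M}\,\|C\|$ from \cref{thm: bounds of F_d(p)(t)}, and a lower bound from keeping only the $j=0$ block, $\sigma_{\min}(T)\ge\|C^TF_d(p_0)(\Lambda_{n_{ev}})z\|\ge\sigma_{\min}(C)\,\|V_c^TF_d(p_0)(\Lambda_{n_{ev}})z\|$, via the SVD of $C^T$. The paper phrases this lower bound as discarding the positive semidefinite terms with $j\ge 1$ in $z^TTT^Tz=\sum_{j} z^TF_d(p_j)(\Lambda_{n_{ev}})CC^TF_d(p_j)(\Lambda_{n_{ev}})z$, which is exactly your block-dropping step.
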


\begin{proof}
From the definition of $T$, since $p_j$ and $\Lambda_{n_{ev}}$ are real, we obtain
\begin{equation}\label{sumT}
TT^T=\sum_{j=0}^{M-1}F_d(p_j)(\Lambda_{n_{ev}}) C C^TF_d(p_j)(\Lambda_{n_{ev}})\in \mathbb{R}^{n_{ev}\times
n_{ev}}.
\end{equation}
By \cref{thm: bounds of F_d(p)(t)}, we have $\|F_d(p_j)(\Lambda_{n_{ev}})\|\leq 1$.
Therefore,
\begin{equation}\label{upbound}
\sigma_{\max}^2(T)=\sigma_{\max}(TT^T) \leq \sum_{j=0}^{M-1}\|F_d(p_j)(\Lambda_{n_{ev}}) C\|^2
\leq M\|C\|^2.
\end{equation}
Notice that
$$
\sigma_{\min}^2(T)=\sigma_{\min}(TT^T)=z^TTT^Tz
$$
and each term in the right-hand side of \eqref{sumT} is symmetric positive semi-definite. Then
exploiting the SVD of $C^T$, we obtain
\begin{eqnarray}
\sigma_{\min}^2(T) &\geq & z^TF_d(p_0)(\Lambda_{n_{ev}}) C C^TF_d(p_0)(\Lambda_{n_{ev}})z \nonumber\\
&=&\|C^TF_d(p_0)(\Lambda_{n_{ev}})z\|^2 \nonumber\\
&\geq &\sigma_{\min}^2(C) \|V_c^TF_d(p_0)(\Lambda_{n_{ev}})z\|^2.  \label{lowbound}
\end{eqnarray}
Bound \eqref{idealratio} then follows from \eqref{upbound}, \eqref{lowbound} and \eqref{eq: eigenvalues of F_d(1)(A)}.
\end{proof}

\begin{remark}\label{rem3.10}
$\|V_c^TF_d(p_0)(\Lambda_{n_{ev}})z\|$ is the projection size of $F_d(p_0)(\Lambda_{n_{ev}})z$
onto the column space of $C$. Notice that the smallest eigenvalue (singular value)
of the symmetric positive semi-definite matrix $F_d(p_0)(\Lambda_{n_{ev}})$ is $\gamma_{n_{ev}}$ and that
$$
\|V_c^TF_d(p_0)(\Lambda_{n_{ev}})z\|^2+\|V_{c,\perp}^TF_d(p_0)(\Lambda_{n_{ev}})z\|^2
=\|F_d(p_0)(\Lambda_{n_{ev}})z\|^2.
$$
Then
for a Gaussian random matrix $C\in \mathbb{R}^{n_{ev}\times \ell}$
and $V_c\in \mathbb{R}^{n_{ev}\times \ell}$, since
$V_{c,\perp}\in \mathbb{R}^{n_{ev}\times (n_{ev}-\ell)}$, we approximately
have
$$
\|V_c^TF_d(p_0)(\Lambda_{n_{ev}})z\|
\approx \sqrt{\frac{\ell}{n_{ev}}}\|F_d(p_0)(\Lambda_{n_{ev}})z\|\geq
\sqrt{\frac{\ell}{n_{ev}}}\gamma_{n_{ev}}.
$$
Therefore, by $n_{ev}\leq M\ell$ and $\gamma_{n_{ev}}>\frac{1}{4}$ under the assumption \eqref{eq: theoretical requirement on degree}, bound~\eqref{idealratio} is approximately no more than
$$
4\sqrt{M}\sqrt{\frac{n_{ev}}{\ell}}\kappa(C)\leq 4M\kappa(C).
$$
On the other hand, for the Gaussian random matrix $C$, it is well known from, e.g.,
\cite[Theorem 8.4]{Martinsson_Tropp_2020} that
\begin{equation}\label{kappaT}
\kappa(C)\leq \frac{\sqrt{n_{ev}}+\sqrt{\ell}}{\sqrt{n_{ev}}-\sqrt{\ell}}
\end{equation}
with high probability,
indicating that $\kappa(C)$ is generally generic and the upper bound in \eqref{idealratio} is of modest size
provided that $\ell$ is considerably smaller than $n_{ev}$.

\end{remark}

\begin{remark} \label{rem3.11}
For $\widetilde{X}^{(k-1)}\not=X_{n_{ev}}$, by continuity,
$\sigma_1(S^{(k)}) / \sigma_{n_{ev}}(S^{(k)})$ approaches $\kappa(T)$ as $k$ increases. Consequently,
the numerical rank of $S^{(k)}$ is no smaller than $n_{ev}$, meaning that
the combination restart is numerically stable and the restarted subspaces $\Range(S^{(k)})$ retain
effective information on $X_{n_{ev}}$. Generally, it is expected that
the ratio $\sigma_1(S^{(k)}) / \sigma_{n_{ev}}(S^{(k)})$ is of modest size
even though $\widetilde{X}^{(k-1)}$ is not close to $X_{n_{ev}}$.
\end{remark}

We next consider a general $\widetilde{X}^{(k-1)} \in \Complex^{n \times n_{ev}}$, and establish the following result.

\begin{theorem} \label{thm: convergence of combination restart}
  Given $\widetilde{X}^{(k-1)} \in \Complex^{n \times n_{ev}}$
  and $C \in \Real^{n_{ev} \times \ell}$,
  let $\delta$ and $V^{(k)}$ be defined as in \cref{eq: def: Delta_min and delta,eq: def: combination restart}, respectively,
  write $\widetilde{\mathcal{X}}^{(k-1)} = \Range(\widetilde{X}^{(k-1)})$, $\mathcal{X}_{n_{ev}} = \Range(X_{n_{ev}})$, $\mathcal{S}^{(k)} = \Range(S^{(k)})$,
  and
\begin{equation} \label{eq: def: T^(k)}
    T^{(k)} = \left(
      F_d(p_0)(\Lambda_{n_{ev}})X_{n_{ev}}^H V^{(k)}, \dots,
      F_d(p_{M-1})(\Lambda_{n_{ev}})X_{n_{ev}}^H V^{(k)}
    \right) \in \Complex^{n_{ev} \times M \ell},
  \end{equation}
  and assume that $X_{n_{ev}}^H \widetilde{X}^{(k-1)}$ is nonsingular,
  $T^{(k)}$ is of full row rank,
  the CJ series degree $d$ satisfies \cref{eq: theoretical requirement on degree},
  and
  \begin{displaymath}
    1 \leq \ell <n_{ev} \leq M \ell < n.
  \end{displaymath}
  Then it holds that
  \begin{equation} \label{eq: convergence of combination restart}
    \tan \angle \left( \mathcal{S}^{(k)}, \mathcal{X}_{n_{ev}} \right)
    \leq \frac{\delta \sqrt{M}}{1/2 - \delta}
    \cdot\kappa(T^{(k)}) \cdot
     \tan \angle(\widetilde{\mathcal{X}}^{(k-1)}, \mathcal{X}_{n_{ev}})
  \end{equation}
  with $\delta<\frac{1}{4}$.
\end{theorem}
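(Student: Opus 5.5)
The plan is to exhibit an $n_{ev}$-dimensional subspace of $\mathcal{S}^{(k)}$ whose tangent with $\mathcal{X}_{n_{ev}}$ can be controlled. This suffices because $\angle(\mathcal{S}^{(k)},\mathcal{X}_{n_{ev}})$ cannot exceed the largest canonical angle between $\mathcal{X}_{n_{ev}}$ and any $n_{ev}$-dimensional subspace of $\mathcal{S}^{(k)}$ (by Definition~\ref{def: canonical angles between subspaces}, via $\sigma_{\min}$ of the projection).

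\emph{Step 1: split $S^{(k)}$ along the eigenvectors.} Write $X_\perp=X_{n_{ev},\perp}$ and $\Lambda_\perp=\Lambda_{n_{ev},\perp}$. Since every $F_d(p_j)(A)$ shares the eigenvectors of $A$,
\begin{equation*}
S^{(k)} = X_{n_{ev}}T^{(k)} + X_\perp\bigl(F_d(p_0)(\Lambda_\perp)X_\perp^HV^{(k)},\dots,F_d(p_{M-1})(\Lambda_\perp)X_\perp^HV^{(k)}\bigr)=:X_{n_{ev}}T^{(k)}+X_\perp E .
\end{equation*}

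\emph{Step 2: pick the subspace.} Since $T^{(k)}$ has full row rank, let $Y=T^{(k)\dagger}$. Then $T^{(k)}Y=I_{n_{ev}}$ and $\norm{Y}=1/\sigma_{\min}(T^{(k)})$. Hence $S^{(k)}Y=X_{n_{ev}}+X_\perp EY$. Its range is an $n_{ev}$-dimensional subspace of $\mathcal{S}^{(k)}$, and its tangent with $\mathcal{X}_{n_{ev}}$ is $\norm{EY}$. Therefore
\begin{equation*}
\tan\angle(\mathcal{S}^{(k)},\mathcal{X}_{n_{ev}})\le \frac{\norm{E}}{\sigma_{\min}(T^{(k)})}.
\end{equation*}

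\emph{Step 3: bound $\norm{E}$.} The eigenvalues in $\Lambda_\perp$ lie outside $[a,b]$, where $p_jh=0$ and $|p_j|\le1$ on $[a,b]$. Case (ii) of Theorem~\ref{thm: eigenvalues of F_d(1)(A)} then gives $\norm{F_d(p_j)(\Lambda_\perp)}\le\delta$. Lemma~\ref{lem: norm bounds of S} yields $\norm{E}\le\sqrt{M}\,\delta\,\norm{X_\perp^HV^{(k)}}$.

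Now use $V^{(k)}=\widetilde X^{(k-1)}C$ together with the nonsingularity of $X_{n_{ev}}^H\widetilde X^{(k-1)}$:
\begin{equation*}
X_\perp^HV^{(k)}=\bigl[X_\perp^H\widetilde X^{(k-1)}(X_{n_{ev}}^H\widetilde X^{(k-1)})^{-1}\bigr]X_{n_{ev}}^HV^{(k)} .
\end{equation*}
The bracketed matrix is invariant under a change of basis of $\widetilde{\mathcal X}^{(k-1)}$. Choosing an orthonormal basis and using Proposition~\ref{prop: sin angle between subspaces} shows that its norm is at most $\tan\angle(\widetilde{\mathcal X}^{(k-1)},\mathcal X_{n_{ev}})$. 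Hence
\begin{equation*}
\norm{E}\le\sqrt M\,\delta\,\tan\angle(\widetilde{\mathcal X}^{(k-1)},\mathcal X_{n_{ev}})\,\norm{X_{n_{ev}}^HV^{(k)}} .
\end{equation*}

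\emph{Step 4: relate $\norm{X_{n_{ev}}^HV^{(k)}}$ to $\sigma_{\max}(T^{(k)})$.} This is the main obstacle, and it is where the constant $1/2-\delta$ comes from. By the lower bound in Lemma~\ref{lem: norm bounds of S}, $\sigma_{\max}(T^{(k)})\ge\norm{F_d(1)(\Lambda_{n_{ev}})X_{n_{ev}}^HV^{(k)}}$. For each eigenvalue in $[a,b]$, case (i) of Theorem~\ref{thm: eigenvalues of F_d(1)(A)} gives $F_d(1)(\lambda)\ge h(\lambda)-\delta\ge 1/2-\delta$, where the worst case $h=1/2$ occurs at the endpoints $a,b$. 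Thus $\sigma_{\max}(T^{(k)})\ge(1/2-\delta)\norm{X_{n_{ev}}^HV^{(k)}}$.

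Combining Steps 2–4 gives \eqref{eq: convergence of combination restart}. It remains to check that the degree condition \eqref{eq: theoretical requirement on degree} makes $\delta<1/4$. Substituting $d+2>\sqrt[3]{2}\pi^2\Delta_{\min}^{-4/3}$ into $\delta=\pi^6/(2(d+2)^3\Delta_{\min}^4)$ gives $\delta<\pi^6/(4\pi^6)=1/4$, so in particular $1/2-\delta>0$.
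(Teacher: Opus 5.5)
Your proposal is correct and follows essentially the same route as the paper: the same splitting $S^{(k)}=X_{n_{ev}}T^{(k)}+X_\perp E$, the same test vectors $S^{(k)}T^{(k)\dagger}y$, and the same two bounds $\norm{E}\le\sqrt{M}\,\delta\,\tan\angle(\widetilde{\mathcal{X}}^{(k-1)},\mathcal{X}_{n_{ev}})\,\norm{X_{n_{ev}}^HV^{(k)}}$ and $\sigma_{\max}(T^{(k)})\ge(1/2-\delta)\norm{X_{n_{ev}}^HV^{(k)}}$. The only differences are cosmetic. You package the test vectors as the subspace $\Range(S^{(k)}T^{(k)\dagger})$ and use monotonicity of the largest canonical angle, where the paper takes a vector-wise maximum. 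You also derive the tangent bound for your bracketed matrix from an orthonormal basis, where the paper quotes the Zhu--Knyazev equality.
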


\begin{proof}
Denote $X_\perp = X_{n_{ev},\perp}$, $\Lambda_\perp = \Lambda_{n_{ev},\perp}$,
and express
\begin{displaymath}
  \widetilde{X}^{(k-1)} = X_{n_{ev}}\Psi + X_\perp\Phi
\end{displaymath}
as the orthogonal sum,
where $\Psi = X_{n_{ev}}^H\widetilde{X}^{(k-1)}$ and $\Phi = X_\perp^H\widetilde{X}^{(k-1)}$.
Since $\Psi$ is nonsingular by assumption, it is from
\cite[Theorem~3.1]{tangent_angle} that
\begin{equation} \label{eq: tan angle between subspaces}
  \tan\angle(\widetilde{\mathcal{X}}^{(k-1)},\mathcal{X}_{n_{ev}})
  = \norm{X_\perp^H \widetilde{X}^{(k-1)} (X_{n_{ev}}^H \widetilde{X}^{(k-1)})^{-1}}
  = \norm{\Phi\Psi^{-1}}.
\end{equation}
From $V^{(k)} = \widetilde{X}^{(k-1)}C$, we have $X_{n_{ev}}^H V^{(k)} = \Psi C$ and $X_\perp^H V^{(k)} = \Phi C$.
Hence
\begin{equation} \label{eq: rewrite S^{(k)}}
\begin{aligned}
  S^{(k)}
  &= X_{n_{ev}}
    \bigl(F_d(p_0)(\Lambda_{n_{ev}})\Psi C,\dots,
    F_d(p_{M-1})(\Lambda_{n_{ev}})\Psi C\bigr) \\
  &\quad + X_\perp
    \bigl(F_d(p_0)(\Lambda_\perp)\Phi C,\dots,
    F_d(p_{M-1})(\Lambda_\perp)\Phi C\bigr) \\
  &=: X_{n_{ev}}\widetilde{\Psi} + X_\perp\widetilde{\Phi},
\end{aligned}
\end{equation}
where $\widetilde{\Psi} = T^{(k)}$ is of row full rank.

Take an arbitrary unit vector $x = X_{n_{ev}}y\in\mathcal{X}_{n_{ev}}$ with $\|y\|=1$.
Then 
we have
\begin{displaymath}
  \tan\angle(\mathcal{S}^{(k)}, x)
  = \min_{s\in\mathcal{S}^{(k)}\setminus \{0\}} \tan\angle(s,x).
\end{displaymath}
Since $\widetilde{\Psi}$ is of full row rank, we have $\widetilde{\Psi}\widetilde{\Psi}^\dagger = I_{n_{ev}}$,
where $\widetilde{\Psi}^\dagger$ is the Moore--Penrose pseudoinverse of $\widetilde{\Psi}$. As a result,
by \eqref{eq: rewrite S^{(k)}}, the vector
\begin{displaymath}
  s = S^{(k)} \widetilde{\Psi}^\dagger y
  = X_{n_{ev}}y + X_\perp \widetilde{\Phi}\widetilde{\Psi}^\dagger y
  = x + X_\perp \widetilde{\Phi}\widetilde{\Psi}^\dagger y.
\end{displaymath}
Then
\begin{displaymath}
  \tan\angle(\mathcal{S}^{(k)}, x)
  \leq \tan\angle(s,x)
  = \norm{X_\perp \widetilde{\Phi}\widetilde{\Psi}^\dagger y}
  \leq \norm{\widetilde{\Phi}\widetilde{\Psi}^\dagger}.
\end{displaymath}

Let the columns of $S_{k,\perp}$
be an orthonormal basis of the orthogonal complement of $\mathcal{S}^{(k)}$
with respect to $\mathbb{C}^n$.
Then by \cref{prop: sin angle between subspaces},
since $\Rank(S^{(k)}) \geq \Rank(\widetilde{\Psi}) = n_{ev}$,
we obtain
\begin{displaymath}
  \sin \angle(\mathcal{S}^{(k)},\mathcal{X}_{n_{ev}})
  = \norm{S_{k,\perp}^H X_{n_{ev}}}
  = \max_{x\in\mathcal{X}_{n_{ev}}, \norm{x}=1} \norm{S_{k,\perp}^H x}
  = \max_{x\in\mathcal{X}_{n_{ev}}\setminus \{0\}} \sin\angle(\mathcal{S}^{(k)},x),
\end{displaymath}
and consequently
\begin{equation} \label{eq: tan-S-X}
  \tan\angle(\mathcal{S}^{(k)},\mathcal{X}_{n_{ev}})
  = \max_{x\in\mathcal{X}_{n_{ev}}\setminus \{0\}} \tan\angle(\mathcal{S}^{(k)},x)
  \leq \norm{\widetilde{\Phi}\widetilde{\Psi}^\dagger}
  \leq \kappa(\widetilde{\Psi})\frac{\norm{\widetilde{\Phi}}}{\norm{\widetilde{\Psi}}}.
\end{equation}

We now bound $\norm{\widetilde{\Phi}}$ and $\norm{\widetilde{\Psi}}$.
First,
\cref{eq: rewrite S^{(k)}} shows that
\begin{equation}\label{Psitilde}
  \widetilde{\Phi} = \bigl(F_d(p_0)(\Lambda_\perp),\dots,F_d(p_{M-1})(\Lambda_\perp)\bigr)\,(I_M\otimes (\Phi C)).
\end{equation}
Notice that 
\begin{displaymath}
\begin{aligned}
  \normbig{\bigl(F_d(p_0)(\Lambda_\perp),\dots,F_d(p_{M-1})(\Lambda_\perp)\bigr)}^2
  &= \lambda_{\max}\left( \sum_{k=0}^{M-1}\bigl(F_d(p_k)(\Lambda_\perp)\bigr)^2 \right) \\
  &= \max_{\lambda\in\lambda(A)\setminus[a,b]} \sum_{k=0}^{M-1}\abs{F_d(p_k)(\lambda)}^2.
\end{aligned}
\end{displaymath}
By \cref{eq: error bound of F_d(p)(t)}, the above right-hand side is at most $M \delta^2$.
Moreover, $\norm{I_M\otimes(\Phi C)} = \norm{\Phi C}$,
and
\begin{displaymath}
  \norm{\Phi C}
  = \norm{\Phi\Psi^{-1}\Psi C}
  \leq \norm{\Phi\Psi^{-1}} \cdot \norm{\Psi C}.
\end{displaymath}
Thus, from \eqref{Psitilde}, we have
\begin{equation} \label{eq:bound-Phi}
  \norm{\widetilde{\Phi}}
  \leq \sqrt{M}\delta \cdot \norm{\Phi\Psi^{-1}} \cdot \norm{\Psi C}.
\end{equation}

Second,
according to \cref{eq: rewrite S^{(k)},lem: norm bounds of S}, we obtain
\begin{displaymath}
  \norm{\widetilde{\Psi}}
  \geq \norm{F_d(p_0)(\Lambda_{n_{ev}})\Psi C}
  \geq \sigma_{\min}\left( F_d(p_0)(\Lambda_{n_{ev}}) \right) \cdot \norm{\Psi C}.
\end{displaymath}
By \cref{eq: def: Delta_min and delta},
the assumption that $d$ satisfies \cref{eq: theoretical requirement on degree} means
that $\delta < \frac14$.
It follows from \cref{eq: error bound of F_d(p)(t)} that
\begin{displaymath}
  \sigma_{\min}\left( F_d(p_0)(\Lambda_{n_{ev}}) \right)
  = \min_{\lambda\in\lambda(A)\cap[a,b]}\abs{F_d(p_0)(\lambda)}
  \geq \frac12-\delta >\frac{1}{4}.
\end{displaymath}
Therefore,
\begin{equation} \label{eq:bound-Psi}
  \norm{\widetilde{\Psi}}
  \geq (\frac12-\delta) \cdot \norm{\Psi C}.
\end{equation}
Substituting \cref{eq:bound-Phi,eq:bound-Psi} into \cref{eq: tan-S-X},
we obtain
\begin{displaymath}
  \tan \angle \left( \mathcal{S}^{(k)}, \mathcal{X}_{n_{ev}} \right)
  \leq  \frac{\delta \sqrt{M}}{1/2 - \delta} \cdot
    \kappa(\widetilde{\Psi}) \cdot
    \norm{\Phi\Psi^{-1}}.
\end{displaymath}
Finally,
using \cref{eq: tan angle between subspaces} and $\widetilde{\Psi} = T^{(k)}$ completes the proof.
\end{proof}

\begin{remark}
  In accordance with the definitions of $T$ and $T^{(k)}$
  in \cref{eq: thick-like: eigenvectors as restart vectors,eq: def: T^(k)},
  it follows that $T^{(k)} \to T$ as $\widetilde{X}^{(k-1)} \to X_{n_{ev}}$.
  Consequently, for $k$ sufficiently large,
  relation \cref{eq: convergence of combination restart} is asymptotically
  \begin{equation} \label{eq: convergence of combination restart with T}
    \tan \angle \left( \mathcal{S}^{(k)}, \mathcal{X}_{n_{ev}} \right)
    \lesssim \frac{\delta \sqrt{M}}{1/2 - \delta}  \cdot \kappa(T)
    \cdot \tan \angle(\widetilde{\mathcal{X}}^{(k-1)}, \mathcal{X}_{n_{ev}}).
  \end{equation}
\end{remark}
Notice that under the condition \cref{eq: theoretical requirement on degree}, we have $\frac{\delta}
{1/2 - \delta}<1$.
\cref{thm: convergence of combination restart} indicates that, for $\delta$ sufficiently small, once
$\widetilde{X}^{(k-1)}$ extracted from $\mathcal{S}^{(k-1)}$
is reasonably close to $X_{n_{ev}}$,
the combination restart \cref{eq: def: combination restart} guarantees that
the restarted subspace $\mathcal{S}^{(k)}$ generally
contains much better approximations to the desired eigenspace
$\mathcal{X}_{n_{ev}}$ than $\mathcal{S}^{(k-1)}$ does, meaning that the new
$\widetilde{X}^{(k)}$ extracted from $\mathcal{S}^{(k)}$ is generally much more accurate than $\widetilde{X}^{(k-1)}$.

\begin{remark}
Combining the requirement $\frac{\delta \sqrt{M}}{1/2 - \delta}\kappa(T)<1$ with \cref{eq: def: Delta_min and delta}, we see that the polynomial degree $d$ satisfies
\begin{displaymath}
  d > \frac{\pi^2 ( 1 + \sqrt{M}\kappa(T) )^{1/3} }{ \Delta_{\min}^{4/3} } - 2
  \approx \frac{\sqrt[3]{\kappa(T)} \pi^2 M^{1/6}}{ \Delta_{\min}^{4/3} } - 2,
\end{displaymath}
in which we can replace $\kappa(T)$ by its bound in \eqref{kappaT}.
In computations,
we use an empirical choice of the CJ series degree $d$:
\begin{equation} \label{eq: degree estimate for practice}
  d = \left\lceil D \cdot \frac{\sqrt[3]{2} \pi^2 M^{1/6}}{(b-a)^{4/3}} \right\rceil - 2,
\end{equation}
where $D \in [1, 8]$, which is similar to that suggested in \cite{CJ_FEAST_cross} when $M=1$.
\end{remark}

\section{A CJ--SS--RRR algorithm with the hybrid restart}
\label{sec: CJ--SS--RRR with hybrid restart}

\subsection{The selection of restart vectors}

The preceding theorems and remarks demonstrate that, under the assumptions that
$n_{ev}$ is known and $\widetilde{X}^{(k)}$ is selected correctly,
the numerical rank of $S^{(k)}$ generated by the combination restart is no smaller than
$n_{ev}$ and $\widetilde{X}^{(k)} \rightarrow X_{n_{ev}}$ as $k$ increases.
However, there are possibly spurious Ritz pairs and refined Ritz pairs that must be identified.
The authors \cite{CJ_SS_RRR} have shown that the Rayleigh--Ritz procedure itself cannot
identify and remove spurious Ritz pairs reliably but the refined Rayleigh--Ritz procedure \cref{eq: def: refined vector} that computes the refined Ritz vectors
can be embedded with a reliable removal of spurious refined Ritz values and extra refined
Ritz vectors, so that it works more reliably
and efficiently than the Rayleigh--Ritz projection.
As a result, we propose to use those refined Ritz vectors obtained by the refined removal approach
as $\widetilde{X}^{(k)}$ in the combination restart \cref{eq: def: combination restart}, which leads to
a restarted CJ--SS--RRR algorithm. The extensive experiments in \cite{CJ_SS_RRR} have illustrated
that the number of the refined Ritz vectors after removal is exactly $n_{ev}$, provided that their relative
residual norms drop below $10^{-3}$ and even $10^{-2}$.

Similarly, when taking the Ritz vectors to form $\widetilde{X}^{(k)}$, we get the CJ--SS--RR algorithm with the combination restart, but those
removal approaches used nowadays are not reliable and lack a theoretical support \cite{CJ_SS_RRR}.

\subsection{The hybrid restart scheme}
\label{subsec: hybrid restart}

For the combination restart \cref{eq: def: combination restart}, \cref{thm: convergence of combination restart} shows that
the closer the columns of $\widetilde{X}^{(k-1)}$ are to the $n_{ev}$ desired eigenvectors,
the more accurate the restarted subspace $\Range(S^{(k)})$ is, leading to
faster convergence.
Conversely, if $\widetilde{X}^{(k-1)}$ is not close to the desired eigenvectors,
$\Range(S^{(k)})$ is not so good, causing that the algorithm converges slowly.
Therefore, when employing the combination restart,
it is desirable that $\widetilde{X}^{(k-1)}$ has already had some accuracy.

The above considerations motivate us to consider a hybrid restart:
(i) apply the natural restart in the initial phase
until the approximate eigenpairs have some accuracy;
(ii) switch to the combination restart in order to ensure numerical stability and
accelerate the convergence of
the corresponding restarted algorithms.



The advantage of the natural restart lies in the property \cref{eq: SI property of S^(k)} of subspace iteration, which successively improves $\Range(S^{(k)})$ regularly
and guarantees that the restarted
algorithms compute more accurate approximate eigenpairs as $k$ increases,
though the convergence may be slow,
provided that the size of $\sigma_1(S^{(k)}) / \sigma_{n_{ev}}(S^{(k)})$ is modest.
The advantage of the combination restart is the higher effectiveness of $\Range(S^{(k)})$ and
its numerical stability once $\widetilde{X}^{(k-1)}$ starts to have some accuracy,
which makes the restarted algorithms converge faster.
Moreover, provided that the CJ series degree $d$ satisfies \cref{eq: theoretical requirement on degree} and $1 \leq \ell< n_{ev}$, from \eqref{eq: eigenvalues of F_d(1)(A)} we have
\begin{displaymath}
  1
  \leq \frac{\gamma_\ell}{\gamma_{n_{ev}}}
  \leq \frac{\gamma_1}{\gamma_{n_{ev}}}
  \leq 4.
\end{displaymath}
Therefore, bounds \eqref{eq: ratio of singular values of S^(k) for SI-like} and \eqref{eq: ratio of singular values of S^(k) for SI-orth} indicate that
the application of the natural restart in the initial stage does not
cause numerical instability for small $k$.
Consequently,
the hybrid restart scheme exploits the advantages of both the natural restart and the combination restart
but avoids the drawbacks of each of them,
thereby enhancing the performance of restarted algorithms.

In computations, we use the relative residual norms $\norm{\hat{r}_i^{(k)}}$ of the refined Ritz vectors
to design a switching criterion between the two kinds of restarts.
We first employ the natural restart and then switch to the combination restart
when
\begin{equation} \label{eq: switch criterion}
  \max_{1 \leq i \leq n_{ev}} \norm{\hat{r}_i^{(k)}} \leq \norm{A} \cdot \epsilon_{\rm switch}
\end{equation}
for a given small threshold $\epsilon_{\rm switch}$. 
To be robust, we add a guard switching criterion:
If the current maximum residual norm is greater than the one in the previous restart:
\begin{equation} \label{eq: guard switch criterion}
  \max_{1 \leq i \leq n_{ev}} \norm{\hat{r}_i^{(k)}} > \max_{1 \leq i \leq n_{ev}} \norm{\hat{r}_i^{(k-1)}}, \end{equation}
we also switch to the combination restart. The occurrence of \eqref{eq: guard switch criterion}
is due to numerical instability, i.e., a very large $\sigma_1(S^{(k)}) / \sigma_{n_{ev}}(S^{(k)})$.
This guard criterion ensures that the algorithm switches to the combination restart
once the natural restarts suffer from numerical instability. We have numerically found that $\epsilon_{\rm switch}\approx \sqrt{tol}$,
where $tol$ is the stopping tolerance in \eqref{eq: convergence criterion}, is a good choice and that
\eqref{eq: guard switch criterion} occurs only when $tol\leq 10^{-12}$.


\subsection{The complete algorithm}

Having discussed the combination restart and the hybrid restart,
we now describe a complete CJ--SS--RRR algorithm with the hybrid restart in \cref{alg: CJSSRRR_hybrid}.
Since \cref{thm: ratio of singular values of S^(k) for SI-like,thm: ratio of singular values of S^(k) for SI-orth} indicate that the natural restart \cref{eq: def: SI-orth restart} has better numerical stability than the naive natural restart \cref{eq: def: SI restart},
we use \cref{eq: def: SI-orth restart} as the initial restart in \cref{alg: CJSSRRR_hybrid}.
In step~\ref{step: CJSSRRR_hybrid: convergence test},
the same convergence criterion \cref{eq: convergence criterion} as in \cref{alg: CJSSRRR} is used to test whether the refined Ritz pairs have converged.

\begin{algorithm}[htbp]
\caption{The CJ--SS--RRR algorithm with the hybrid restart}
\label{alg: CJSSRRR_hybrid}
\begin{algorithmic}[1]
  \REQUIRE The matrix $A$, the interval $[a,b]$, the series degree $d$, the moment number $M$, and
  a column orthonormal matrix $V^{(0)}\in \mathbb{C}^{n\times \ell}$ with $M\ell \geq n_{ev}$.
 \STATE Initially take the natural restart \cref{eq: def: SI-orth restart}.
  \STATE Compute the CJ coefficients by \cref{eq: def: Chebyshev coefficient,eq: def: Jackson damping factor}.
  \FOR {$k = 0, 1, 2, \dots$}
    \STATE Compute $S^{(k)}$ in \cref{eq: def: k-th search subspace S^(k)} and its QR decomposition
    $S^{(k)} = U^{(k)}R^{(k)}$.
    \STATE Use the Rayleigh--Ritz procedure \cref{eq: def: Rayleigh--Ritz} to get the Ritz values $\tilde{\lambda}_i^{(k)}$. 
    \STATE Compute the refined Ritz pairs $(\hat{\lambda}_i^{(k)}, \hat{x}_i^{(k)})$ by \cref{eq: def: refined vector}, and select them by \cref{alg: removal}.
    \STATE Test the convergence by \cref{eq: convergence criterion}. \label{step: CJSSRRR_hybrid: convergence test}
    \STATE If not converged, restart with the natural restart \cref{eq: def: SI-orth restart} or
    the combination restart \cref{eq: def: combination restart} when either \cref{eq: switch criterion} or \cref{eq: guard switch criterion} is satisfied.
  \ENDFOR
  \RETURN The $n_{ev}$ converged refined Ritz pairs $(\hat{\lambda}_i^{(k)}, \hat{x}_i^{(k)})$.
\end{algorithmic}
\end{algorithm}

Similarly, we have developed a CJ--SS--RR algorithm with the hybrid restart, but our extensive numerical
experiments have demonstrated that the CJ--SS--RR algorithms with the combination and hybrid restarts
were less efficient and reliable than the corresponding CJ--SS--RRR algorithms. Therefore,
we will not pay attention to and report concrete experiments on CJ--SS--RR.

\section{Numerical experiments}
\label{sec: numerical experiments}

The test matrices are from the SuiteSparse Matrix Collection \cite{matrix_collection},
and some of their basic properties and the intervals of interest are listed in \cref{tab: test matrices}.
For each matrix,
interval I is close to one end of the spectrum, and interval II is inside the spectrum.
The minimum and maximum eigenvalues $\lambda_{\min}, \lambda_{\max}$ of each matrix are computed by the
 {\sc Matlab} function \texttt{eigs}
and the number $n_{ev}$ of desired eigenvalues in each interval is computed by the CJ-FEAST eigensolver,
which is a direct adaptation of the FEAST SVDsolvers in \cite{CJ_FEAST_cross,CJ_FEAST_augmented}.
The unit-length desired eigenvectors $x_1, \ldots, x_{n_{ev}}$ are also computed by the CJ-FEAST eigensolver,
and their relative residual norms are all smaller than $10^{-14}$,
the level of machine precision $\epsilon_{\rm mach} = 2.22\times 10^{-16}$.
All the numerical experiments were performed on an Intel Core i7-9700, CPU 3.0GHz, 8GB RAM using {\sc Matlab} R2024b under the Microsoft Windows 10 64-bit system.

\begin{table}[ht]
\caption{Properties of the test matrices and the intervals of interest.}
\label{tab: test matrices}
\centering

\begin{tabular}{*{8}{c}}
\toprule
Matrix            & Size  & $\lambda_{\min}$ & $\lambda_{\max}$ & Interval I  & $n_{ev}$ & Interval II  & $n_{ev}$ \\
\midrule
SiH4              & 5041  & -0.996           & 36.8             & [0.5,2.5]   & 120      & [17.5,18]   & 123      \\
SiNa              & 5743  & -0.705           & 25.7             & [0,2]       & 178      & [12.5,13]   & 183      \\
delaunay\_n13     & 8192  & -3.57            & 6.46             & [5.2,6.2]   & 310      & [1.5,2]     & 312      \\
benzene           & 8219  & -0.730           & 58.4             & [1.5,4.5]   & 229      & [28,29]     & 231      \\
stokes64          & 12546 & -1.70e-3         & 4.00             & [3.8,3.9]   & 232      & [1.9,2.1]   & 232      \\
Pres\_Poisson     & 14822 &  1.28e-5         & 26.1             & [8,13]      & 68       & [4.2,5]     & 66       \\
Si10H16           & 17077 & -1.15            & 37.0             & [0.5,2]     & 307      & [18,18.4]   & 303      \\
rgg\_n\_2\_15\_s0 & 32768 & -5.12            & 17.4             & [11,13]     & 283      & [5.5,6]     & 286      \\
SiO               & 33401 & -1.68            & 84.4             & [2,4]       & 332      & [41,41.5]   & 332      \\
Andrews           & 60000 & -2.17e-14        & 36.5             & [1.5,2.5]   & 346      & [18.1,18.3] & 353      \\
\bottomrule
\end{tabular}
\end{table}

In what follows, since the CJ series requires the spectrum of $A$ to be in $[-1, 1]$,
the matrix $A$ and interval ends $a, b$ are replaced by $l(A)$, $l(a)$ and $l(b)$,
where $l(t) = (2t - \lambda_{\min} - \lambda_{\max})/(\lambda_{\max} - \lambda_{\min})$.
For all the test problems,
let the starting matrix $V^{(0)}$ be an orthonormalization of a Gaussian
random matrix generated from the standard normal distribution,
and the random matrix $C$ in the combination restart \cref{eq: def: combination restart} also be generated from the standard normal distribution.

\subsection{Numerical instability of the two natural restarts}

In this subsection, we numerically verify the sharpness of the lower bounds~\eqref{eq: ratio of singular values of S^(k) for SI-like} and \eqref{eq: ratio of singular values of S^(k) for SI-orth} for $\sigma_1(S^{(k)})/\sigma_{n_{ev}}(S^{(k)})$ in \cref{thm: ratio of singular values of S^(k) for SI-like,thm: ratio of singular values of S^(k) for SI-orth}.
We test the matrices SiH4, SiNa, delaunay\_n13, and benzene in \cref{tab: test matrices} with interval I's.
For each test problem,
we set the series degree $d = \lceil D \pi^2 (b-a)^{-4/3} \rceil - 2$ with $D = 1, 2$,
the moment numbers $M = 2, 4, 8$, and $M\ell = 8 \lceil 1.5 n_{ev} / 8 \rceil$.
For each of the problems, we run \cref{alg: CJSSRRR} with the natural restart schemes
\cref{eq: def: SI restart,eq: def: SI-orth restart} with the same $V^{(0)}$, respectively,
and perform $100$ restarts, i.e., all the $100$ restarts are displayed in the figures.
\cref{tab: ratio of singular values of S^(k) and real rates} reports the results after $30$ restarts.

\begin{figure}[ht]
  \centering
  \newcommand{\txtW}{0.47\textwidth}

  \begin{subfigure}[t]{\txtW}
    \centering
    \includegraphics[width=5.5cm, height=3.5cm]{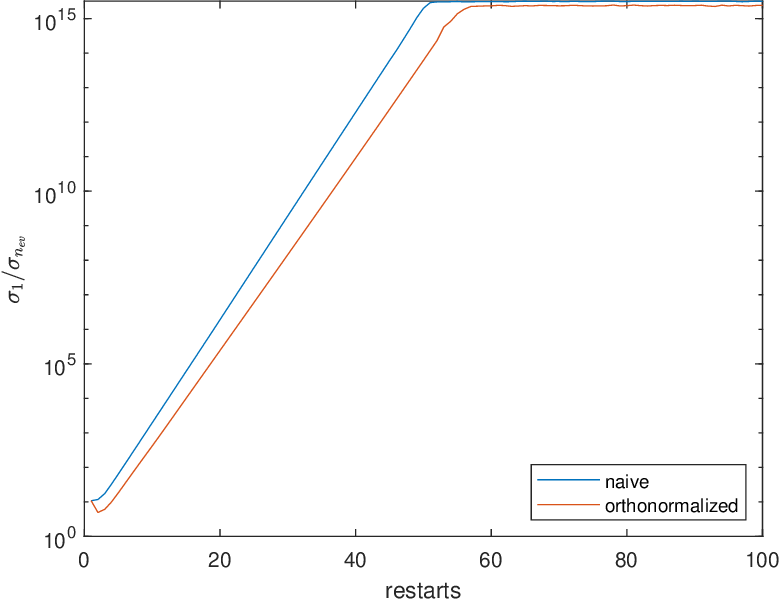}
    \subcaption{SiH4}
  \end{subfigure}
  \begin{subfigure}[t]{\txtW}
    \centering
    \includegraphics[width=5.5cm, height=3.5cm]{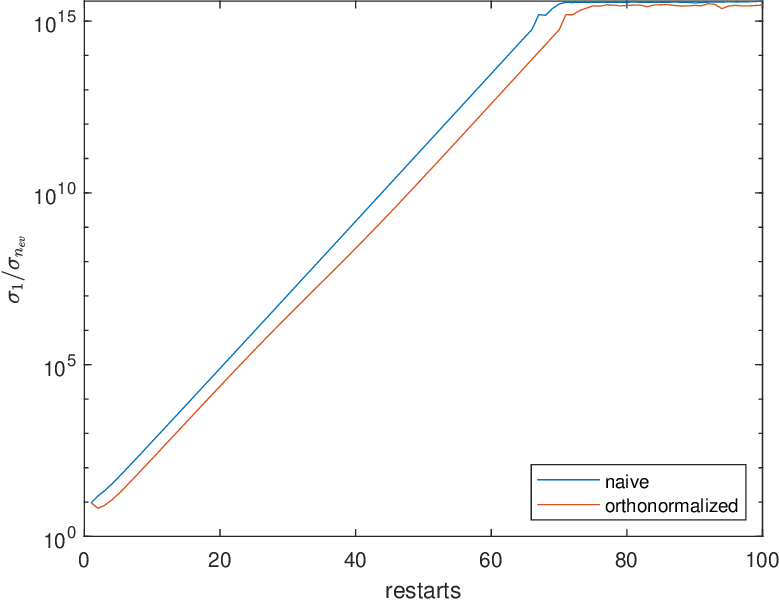}
    \subcaption{SiNa}
  \end{subfigure}

  \begin{subfigure}[t]{\txtW}
    \centering
    \includegraphics[width=5.5cm, height=3.5cm]{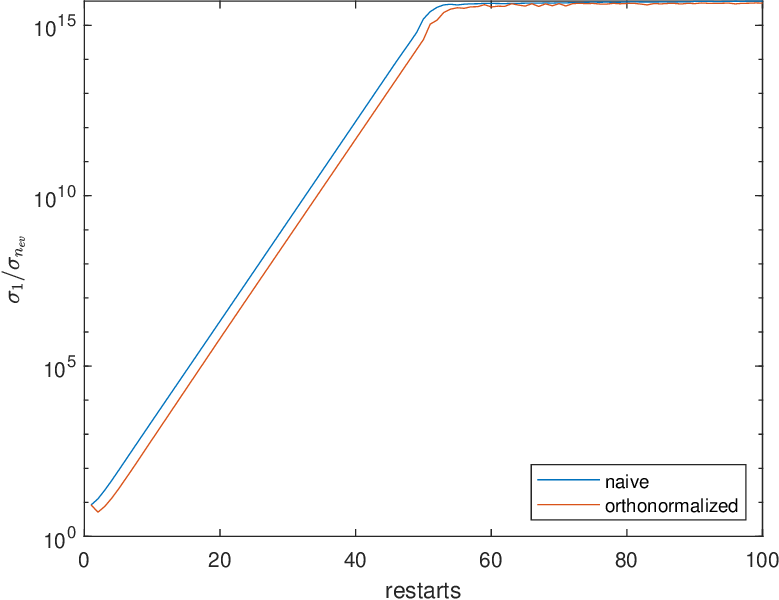}
    \subcaption{delaunay\_n13}
  \end{subfigure}
  \begin{subfigure}[t]{\txtW}
    \centering
    \includegraphics[width=5.5cm, height=3.5cm]{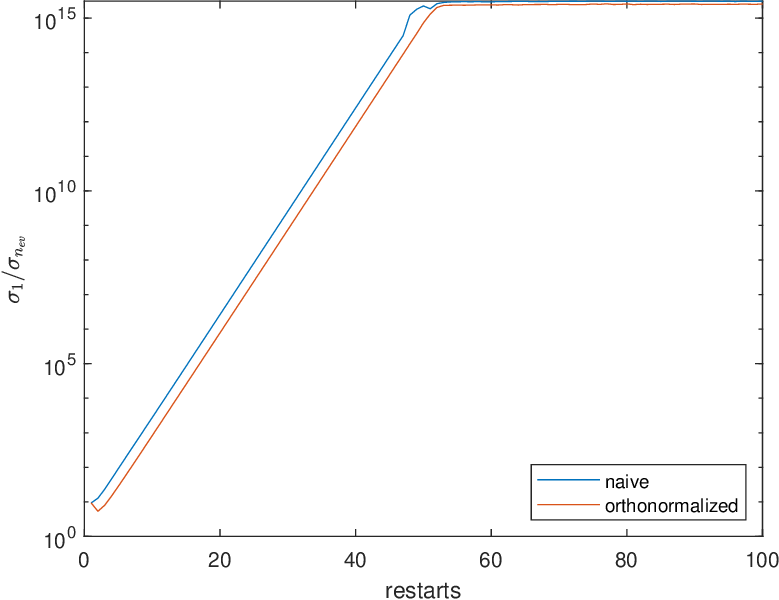}
    \subcaption{benzene}
  \end{subfigure}

  \caption{
    The ratios $\sigma_1(S^{(k)}) / \sigma_{n_{ev}}(S^{(k)})$ obtained by the naive and
    orthonormalized natural
    restarts with $D = 1$ and $M = 2$.
  }
  \label{fig: two SI restarts: ratio of singular values}
\end{figure}

\begin{figure}[ht]
  \centering
  \newcommand{\txtW}{0.47\textwidth}

  \begin{subfigure}[t]{\txtW}
    \centering
    \includegraphics[width=5.5cm, height=3.5cm]{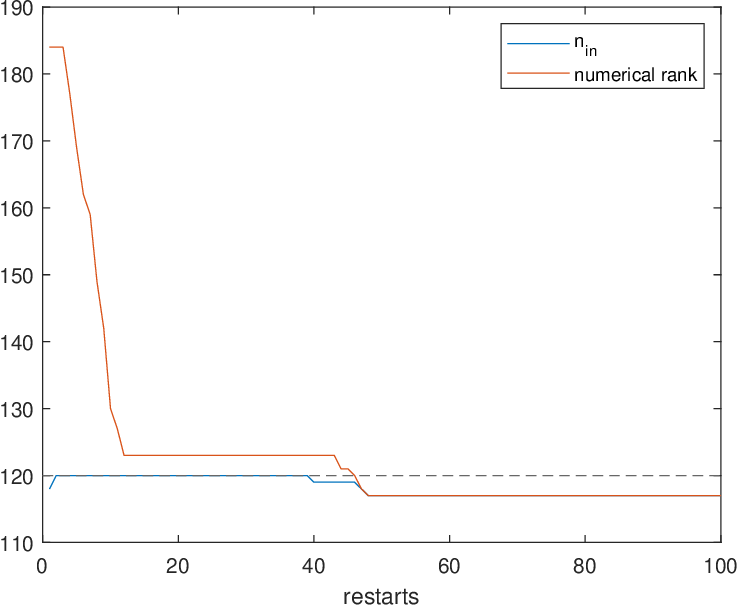}
    \subcaption{SiH4}
  \end{subfigure}
  \begin{subfigure}[t]{\txtW}
    \centering
    \includegraphics[width=5.5cm, height=3.5cm]{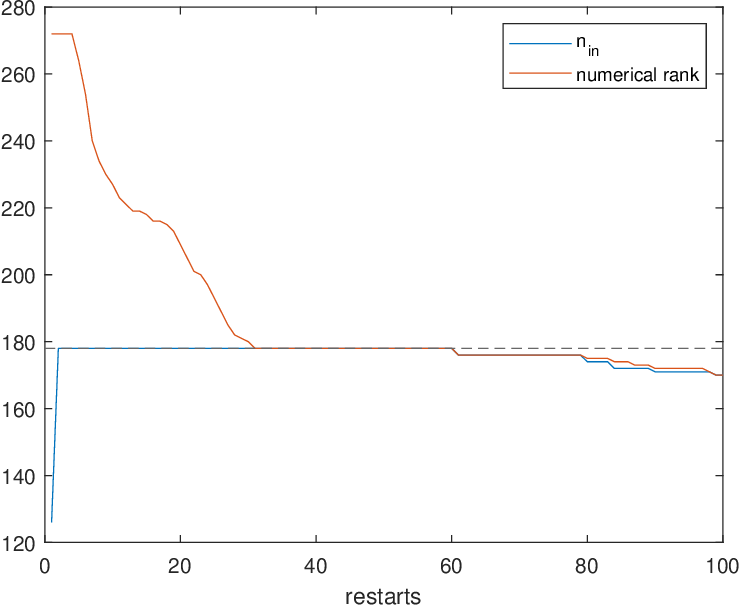}
    \subcaption{SiNa}
  \end{subfigure}

  \begin{subfigure}[t]{\txtW}
    \centering
    \includegraphics[width=5.5cm, height=3.5cm]{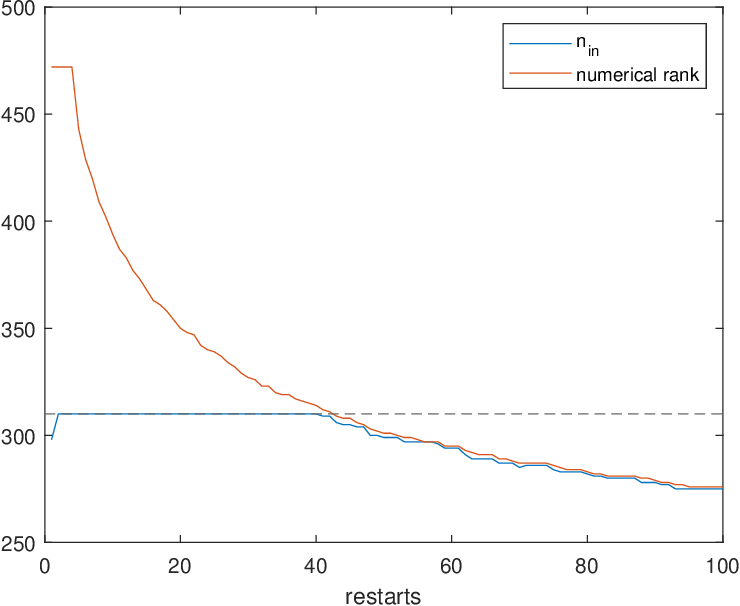}
    \subcaption{delaunay\_n13}
  \end{subfigure}
  \begin{subfigure}[t]{\txtW}
    \centering
    \includegraphics[width=5.5cm, height=3.5cm]{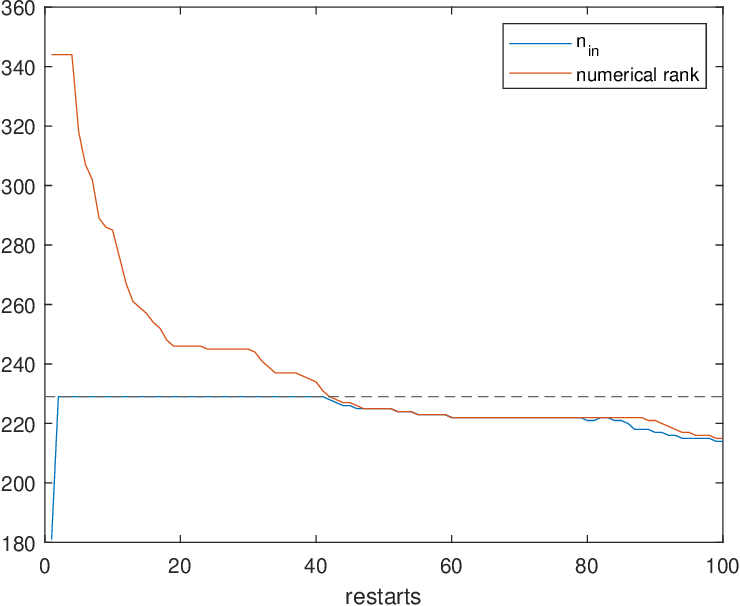}
    \subcaption{benzene}
  \end{subfigure}

  \caption{
    The number $n_{in}$ of refined Ritz values in $[a, b]$ and the numerical rank of $S^{(k)}$
    obtained by the orthonormalized natural restart with $D = 1$ and $M = 2$.
    The horizontal dashed lines indicate the number $n_{ev}$.
  }
  \label{fig: SI_orth restart: n_in and rank}
\end{figure}

\begin{table}[th]
\centering
\caption{
 The values of $\gamma_1 / \gamma_{n_{ev}}$ and $\gamma_\ell / \gamma_{n_{ev}}$,
$(\gamma_1 / \gamma_{n_{ev}})^{30}$ and $(\gamma_\ell / \gamma_{n_{ev}})^{30}$, and
$\sigma_1(S^{(30)}) / \sigma_{n_{ev}}(S^{(30)})$, where ``naive'' and ``orth''
denote the naive natural restart \cref{eq: def: SI restart} and the
orthonormalized natural restart \cref{eq: def: SI-orth restart}, respectively.
}
\label{tab: ratio of singular values of S^(k) and real rates}
\small
\begin{tabular}{*{9}{c}}
\toprule
$M$ & Matrix & $d$ & $\frac{\gamma_1}{\gamma_{n_{ev}}}$ & $\frac{\gamma_\ell}{\gamma_{n_{ev}}}$ &
 $(\frac{\gamma_1}{\gamma_{n_{ev}}})^{30}$ & $(\frac{\gamma_\ell}{\gamma_{n_{ev}}})^{30}$  & \multicolumn{2}{c}{$\frac{\sigma_1(S^{(30)})}{\sigma_{n_{ev}}(S^{(30)})}$} \\ \cline{8-9}
                   &                                &     &      &      & &   & naive & orth   \\
\midrule
\multirow{8}{*}{2} & \multirow{2}{*}{SiH4}          & 196 & 1.99 & 1.80 & 9.2e+8    &  4.6e+7    & 2.0e+9  & 1.5e+8 \\
                   &                                & 393 & 1.98 & 1.98 & 7.9e+8     & 7.9e+8    & 2.1e+9  & 5.7e+8 \\ \cline{2-9}
                   & \multirow{2}{*}{SiNa}          & 120 & 1.64 & 1.45 &  2.8e+6   & 6.9e+4    & 8.5e+6  & 2.4e+6 \\
                   &                                & 242 & 1.41 & 1.40 & 3.0e+4     &2.4e+4    & 8.4e+4  & 2.6e+4 \\ \cline{2-9}
                   & \multirow{2}{*}{delaunay\_n13} & 83  & 1.97 & 1.80 & 6.8e+8     & 4.6e+7   & 1.7e+9  & 4.6e+8 \\
                   &                                & 168 & 1.94 & 1.93 &4.3e+8     & 3.7e+8    & 1.0e+9  & 3.3e+8 \\ \cline{2-9}
                   & \multirow{2}{*}{benzene}       & 207 & 1.99 & 1.89 & 9.2e+8    & 2.0e+8    & 2.5e+9  & 6.4e+8 \\
                   &                                & 416 & 1.99 & 1.98 & 9.2e+8     & 7.9e+8   & 2.4e+9  & 7.2e+8 \\
\midrule
\multirow{8}{*}{4} & \multirow{2}{*}{SiH4}          & 196 & 1.99 & 1.99 & 9.2e+8     & 9.2e+8   & 2.2e+9  & 1.1e+9 \\
                   &                                & 393 & 1.98 & 1.98 & 7.9e+8    & 7.9e+8   & 1.8e+9  & 1.1e+9 \\ \cline{2-9}
                   & \multirow{2}{*}{SiNa}          & 120 & 1.64 & 1.64 & 2.8e+6     & 2.8e+6  & 1.2e+7  & 7.7e+6 \\
                   &                                & 242 & 1.41 & 1.41 & 3.0e+4     & 3.0e+4   & 9.8e+4  & 6.1e+4 \\ \cline{2-9}
                   & \multirow{2}{*}{delaunay\_n13} & 83  & 1.97 & 1.96 & 6.8e+8     & 5.9e+8   & 2.6e+9  & 1.3e+9 \\
                   &                                & 168 & 1.94 & 1.94 & 4.3e+8     & 4.3e+8   & 1.2e+9  & 6.7e+8 \\ \cline{2-9}
                   & \multirow{2}{*}{benzene}       & 207 & 1.99 & 1.99 & 9.2e+8     & 9.2e+8   & 3.0e+9  & 1.7e+9 \\
                   &                                & 416 & 1.99 & 1.99 & 9.2e+8     & 9.2e+8  & 2.4e+9  & 1.4e+9 \\
\midrule
\multirow{8}{*}{8} & \multirow{2}{*}{SiH4}          & 196 & 1.99 & 1.99 & 9.2e+8    & 9.2e+8  & 3.1e+9  & 2.3e+9 \\
                   &                                & 393 & 1.98 & 1.98 & 7.9e+8    & 7.9e+8  & 2.8e+9  & 2.3e+9 \\ \cline{2-9}
                   & \multirow{2}{*}{SiNa}          & 120 & 1.64 & 1.64 & 2.8e+6     &2.8e+6  & 5.5e+7  & 4.0e+7 \\
                   &                                & 242 & 1.41 & 1.41 & 3.0e+4     & 3.0e+4 & 1.9e+5  & 1.5e+5 \\ \cline{2-9}
                   & \multirow{2}{*}{delaunay\_n13} & 83  & 1.97 & 1.97 & 6.8e+8     &6.8e+8  & 5.3e+9  & 3.8e+9 \\
                   &                                & 168 & 1.94 & 1.94 & 4.3e+8    & 4.3e+8  & 2.1e+9  & 1.6e+9 \\ \cline{2-9}
                   & \multirow{2}{*}{benzene}       & 207 & 1.99 & 1.99 & 9.2e+8     & 9.2e+8  & 8.0e+9  & 6.2e+9 \\
                   &                                & 416 & 1.99 & 1.99 & 9.2e+8     & 9.2e+8  & 3.3e+9  & 2.7e+9 \\
\bottomrule
\end{tabular}
\end{table}

\Cref{fig: two SI restarts: ratio of singular values} shows the ratios $\sigma_1(S^{(k)}) / \sigma_{n_{ev}}(S^{(k)})$ obtained by them with $D = 1$ and $M = 2$.
It is seen that for all the test problems,
the ratio $\sigma_1(S^{(k)}) / \sigma_{n_{ev}}(S^{(k)})$ obtained by \cref{eq: def: SI-orth restart} was always smaller than that obtained by \cref{eq: def: SI restart} at each restart.
This indicates that the orthonormalized natural restart was numerically more stable
than the naive natural restart though the two natural restarts were both numerically unstable.

\Cref{fig: SI_orth restart: n_in and rank} shows the number $n_{in}$ of refined Ritz
values in $[a,b]$ and the numerical rank of $S^{(k)}$ obtained by
the orthonormalized natural restart with $D = 1$ and $M = 2$,
where the numerical rank is computed by the TSVD with the threshold $100\epsilon_{\rm mach}\|S^{(k)}\|_F$.
For each problem, the numerical rank was initially equal to  $M\ell$, the column number of $S^{(k)}$,
then decreased with $k$, and finally dropped below $n_{ev}$. The number $n_{in}$ may be smaller than
$n_{ev}$ before the first restart due to the inaccuracy of the initial subspace $\Range(S^{(0)})$,
then was equal to $n_{ev}$ as the restarts proceeded; afterwards it decreased,
together with the numerical rank of $S^{(k)}$, and ultimately dropped below $n_{ev}$.
In other words, more and more desired eigenpairs were lost as
the divergence proceeds because of numerical instability of the natural restart.
These confirm Remark \ref{rem3.1} and the first paragraph in \cref{section:3.2}.

\cref{tab: ratio of singular values of S^(k) and real rates} displays
the results after $30$ restarts by the two natural restarts with different $M$ and $d$, where
we list the values of $\gamma_1 / \gamma_{n_{ev}}$ and $\gamma_\ell / \gamma_{n_{ev}}$,
$(\gamma_1 / \gamma_{n_{ev}})^{30}$ and $(\gamma_\ell / \gamma_{n_{ev}})^{30}$, and
$\sigma_1(S^{(30)}) / \sigma_{n_{ev}}(S^{(30)})$.
We observe from \cref{tab: ratio of singular values of S^(k) and real rates} that the values of
$(\gamma_1 / \gamma_{n_{ev}})^{30}$ and $(\gamma_\ell / \gamma_{n_{ev}})^{30}$ were very
good estimates for $\sigma_1(S^{(30)}) / \sigma_{n_{ev}}(S^{(30)})$, confirming
the sharpness of lower bounds in \cref{thm: ratio of singular values of S^(k) for SI-like,thm: ratio of singular values of S^(k) for SI-orth}.
We also see from \cref{tab: ratio of singular values of S^(k) and real rates} that
the ratio $\sigma_1(S^{(30)}) / \sigma_{n_{ev}}(S^{(30)})$ obtained by the orthonormalized natural
restart was always smaller than that obtained by the naive natural restart for each of the test problems
with different $M$ and $d$, justifying
that the orthonormalized natural restart had better numerical stability
than the naive natural restart. We should point out that,
when the QR decomposition is employed to orthonormalize $S^{(k)}$,
the orthonormalization of $PV^{(k)}$ is also available simultaneously without additional
computational cost.
Therefore, of the two natural restarts,
the orthonormalized natural restart is preferable,
and in the sequel, the natural restart will mean \cref{eq: def: SI-orth restart}.

\subsection{The comparison of the natural restart and combination restart}
\label{subsec: comparison of SI and thick-like}

In this subsection, we compare the performances of the CJ--SS--RRR algorithms
with the natural restart \cref{eq: def: SI-orth restart} and combination restart \cref{eq: def: combination restart}.
The test matrices and intervals are the same as those in the last subsection.
We take the moment number $M=8$,
and choose the series degree $d$ as in \cref{eq: degree estimate for practice} with $D = 1$, and
$M\ell\approx 1.5 n_{ev} $. For each test problem,
we still perform $30$ restarts for the two restart schemes using the same starting matrix $V^{(0)}$.

\begin{figure}[t]
  \centering
  \newcommand{\txtW}{0.47\textwidth}

  \begin{subfigure}[t]{\txtW}
    \centering
    \includegraphics[width=5.5cm, height=3.5cm]{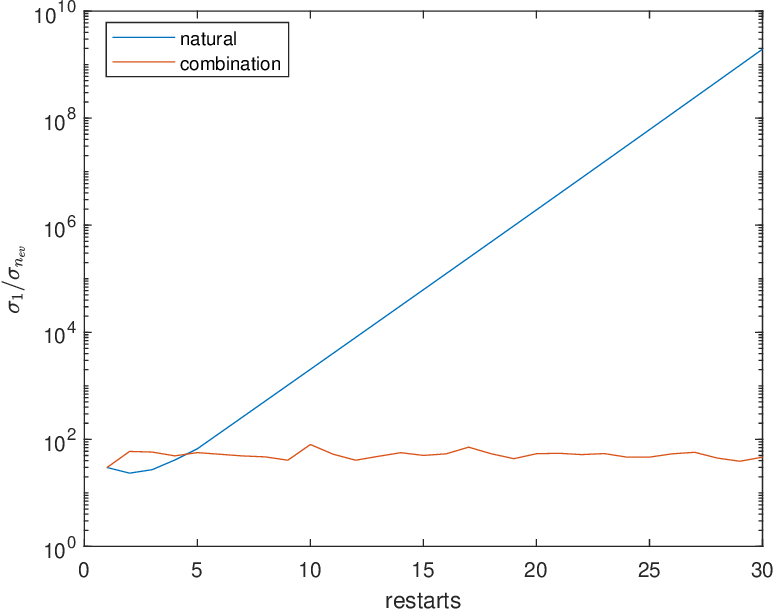}
    \subcaption{SiH4}
  \end{subfigure}
  \begin{subfigure}[t]{\txtW}
    \centering
    \includegraphics[width=5.5cm, height=3.5cm]{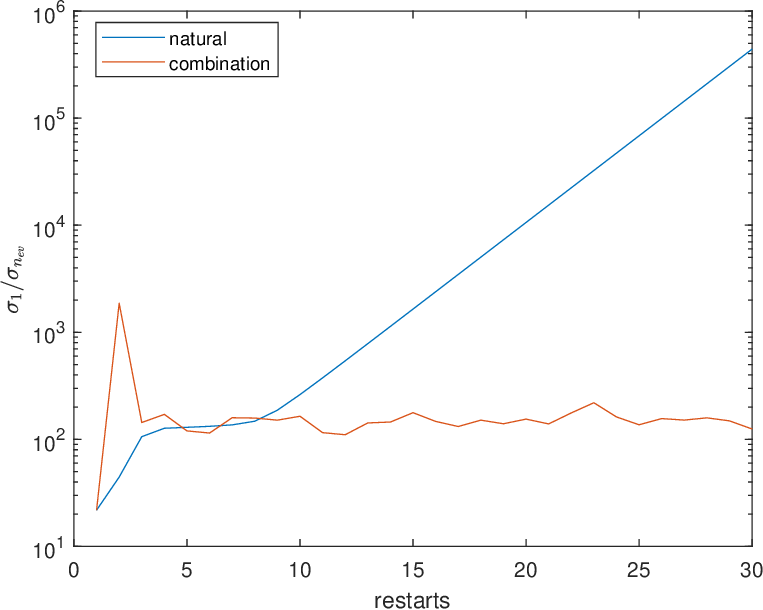}
    \subcaption{SiNa}
  \end{subfigure}

  \begin{subfigure}[t]{\txtW}
    \centering
    \includegraphics[width=5.5cm, height=3.5cm]{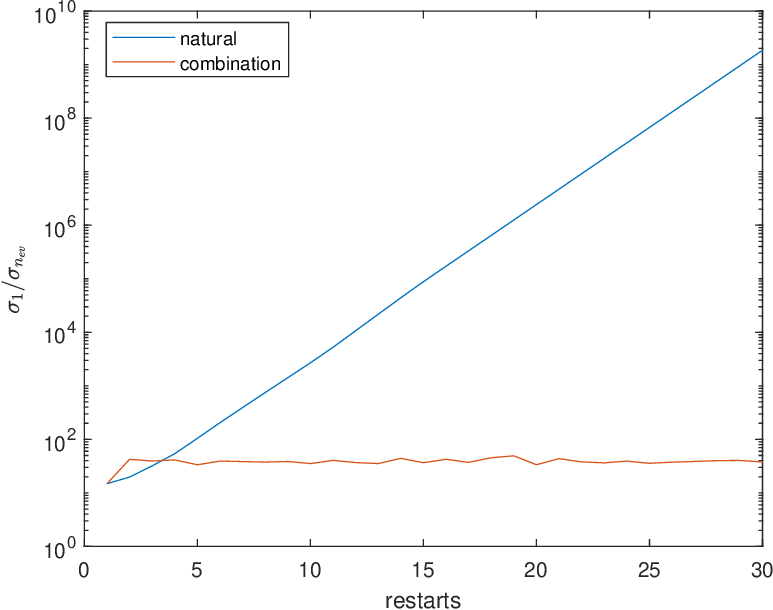}
    \subcaption{delaunay\_n13.}
  \end{subfigure}
  \begin{subfigure}[t]{\txtW}
    \centering
    \includegraphics[width=5.5cm, height=3.5cm]{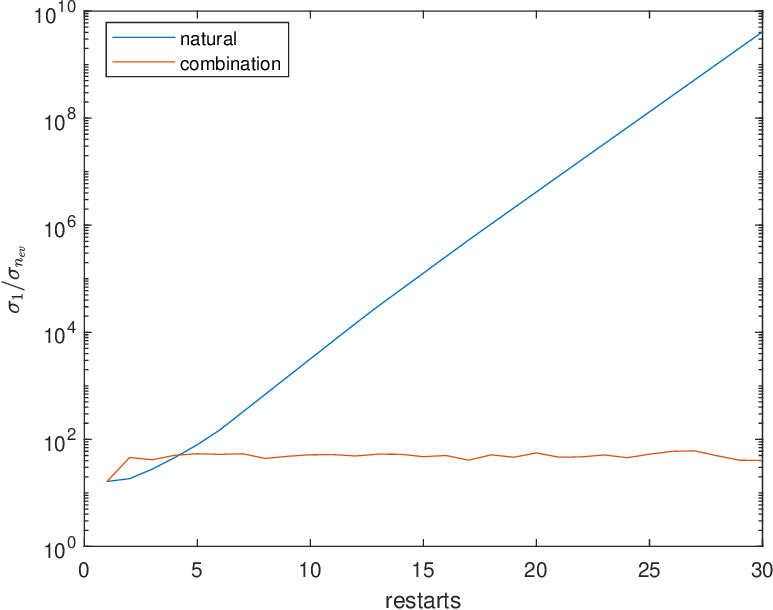}
    \subcaption{benzene}
  \end{subfigure}

  \caption{
    The ratios $\sigma_1(S^{(k)}) / \sigma_{n_{ev}}(S^{(k)})$ obtained by the natural and combination restarts.
  }
  \label{fig: SI vs thick-like: ratio of singular values}
\end{figure}

We plot the ratios $\sigma_1(S^{(k)}) / \sigma_{n_{ev}}(S^{(k)})$ obtained by the natural and combination restarts in \cref{fig: SI vs thick-like: ratio of singular values}.
As we can see, for the four test problems,
the ratios obtained by the combination restart remained almost unchanged and
are around $100$ after a few restarts at most,
whereas the ones obtained by the natural restart increased geometrically with $k$.
These numerical results confirm \cref{thm: ratio of singular values of S^(k) for SI-like,thm: ratio of singular values of S^(k) for SI-orth,idealkappa}.

\begin{figure}[t]
  \centering
  \newcommand{\txtW}{0.47\textwidth}

  \begin{subfigure}[t]{\txtW}
    \centering
    \includegraphics[width=5.5cm, height=3.5cm]{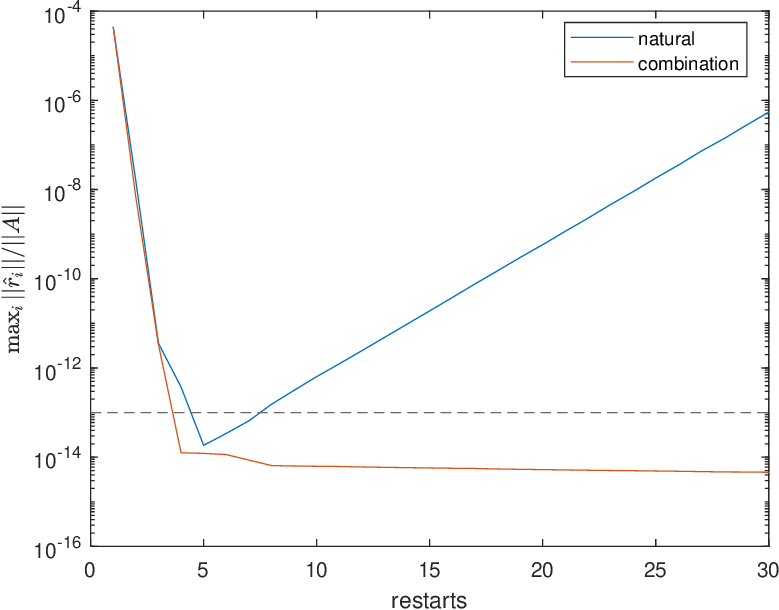}
    \subcaption{SiH4}
  \end{subfigure}
  \begin{subfigure}[t]{\txtW}
    \centering
    \includegraphics[width=5.5cm, height=3.5cm]{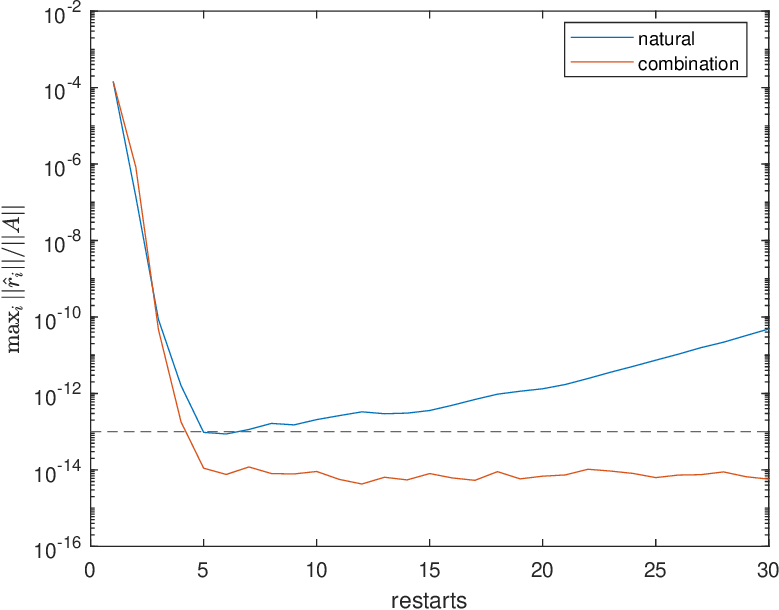}
    \subcaption{SiNa}
  \end{subfigure}

  \begin{subfigure}[t]{\txtW}
    \centering
    \includegraphics[width=5.5cm, height=3.5cm]{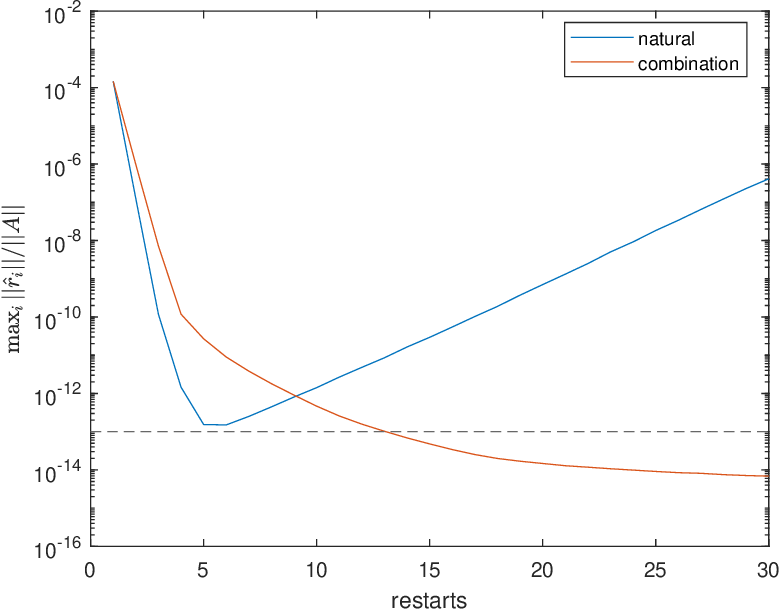}
    \subcaption{delaunay\_n13}
  \end{subfigure}
  \begin{subfigure}[t]{\txtW}
    \centering
    \includegraphics[width=5.5cm, height=3.5cm]{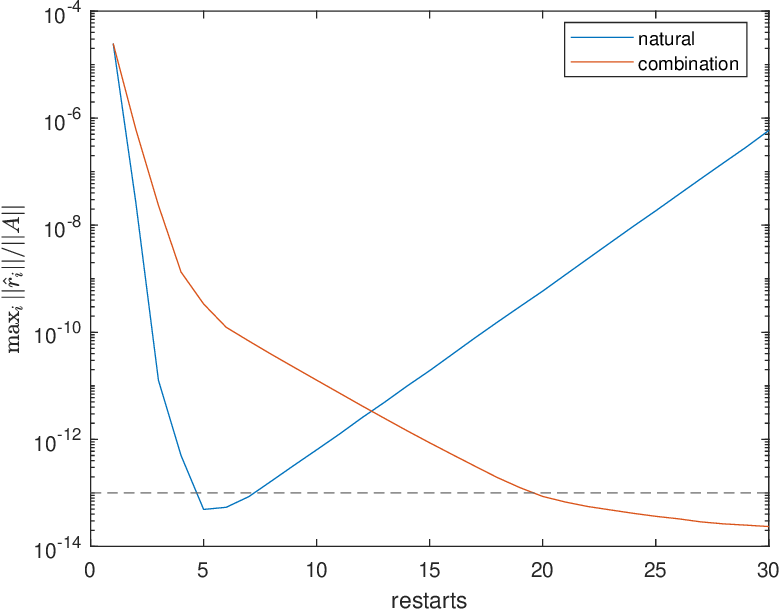}
    \subcaption{benzene}
  \end{subfigure}

  \caption{
    The maximum relative residual norms of the refined Ritz pairs with the refined Ritz values
    in given intervals obtained by the natural
    and combination restarts,
    where the dashed lines indicate the convergence tolerance $tol=10^{-13}$.
  }
  \label{fig: SI vs thick-like: residual norms}
\end{figure}

\cref{fig: SI vs thick-like: residual norms} draws
the maximum relative residual norms of the refined Ritz pairs with
the refined Ritz values in $[a,b]$. As is seen,
for the convergence tolerance $tol = 10^{-13}$,
the algorithm with the natural restart failed
to converge for the test problem delaunay\_n13. We find that it first converged regularly
until the residual norms were at the level of $10^{-13}$ or slightly larger, and
then it started to diverge. This is due to the fact
that $\sigma_1(S^{(k)}) / \sigma_{n_{ev}}(S^{(k)})$ increased quickly and the search subspace obtained
by its QR decomposition severely deviated from the true $\Range(S^{(k)})$ in later restarts.
By contrast, for the four test problems,
the algorithm with the combination restart converged to the working precision
since $\sigma_1(S^{(k)}) / \sigma_{n_{ev}}(S^{(k)})$
was constantly modest.

These numerical results reveal the intrinsic effects of the size of $\sigma_1(S^{(k)}) / \sigma_{n_{ev}}(S^{(k)})$ on the attainable relative
residual norms of approximate eigenpairs,
and confirm the superiority of the combination restart in terms of numerical stability.
They demonstrate the possible failure of the algorithm with the natural restart when greater
accuracy is desired, and justify that
the combination restart is a safer and more effective choice than the natural restart.

\subsection{The performance of the hybrid restart}

As \cref{fig: SI vs thick-like: residual norms} shows, the CJ--SS--RRR
algorithms with the natural and combination
restarts converged equally fast for the problems SiH4 and SiNa in the first stage, but the CJ--SS--RRR algorithm
with the combination restart was slower than the counterpart with
the natural restart for delaunay\_n13 and benzene.
In this subsection, we test the overall efficiency of \cref{alg: CJSSRRR_hybrid},
i.e., the CJ--SS--RRR algorithm with the hybrid restart,
where we take several $\epsilon_{\rm switch}>tol$ in \cref{eq: switch criterion}.
The other parameters are the same as in the last subsection.

As a comparison, we also report the results using the CJ--SS--RRR
algorithms with the natural restart and the combination restart,
respectively.
We test the first six matrices SiH4, SiNa, delaunay\_n13, benzene, stokes64, and Pres\_Poisson in
\cref{tab: test matrices} with interval I's. For each test problem,
we run each of the algorithms five times with five different starting matrices $V^{(0)}$.
In \cref{fig: hybrid: residual norms}, we display the average convergence histories over the first $30$ restarts.

\begin{figure}[t]
  \centering
  \newcommand{\txtW}{0.32\textwidth}

  \begin{subfigure}[t]{\txtW}
    \centering
    \includegraphics[width=\textwidth]{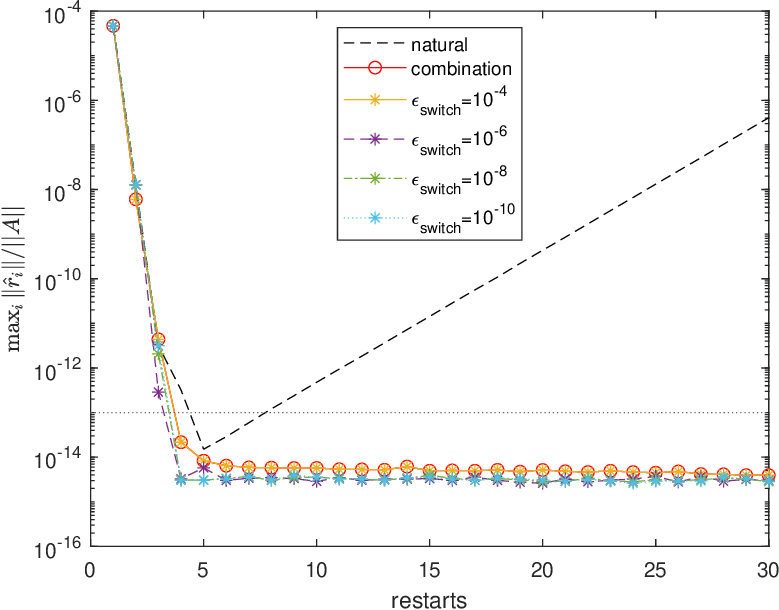}
    \subcaption{SiH4.}
  \end{subfigure}
  \begin{subfigure}[t]{\txtW}
    \centering
    \includegraphics[width=\textwidth]{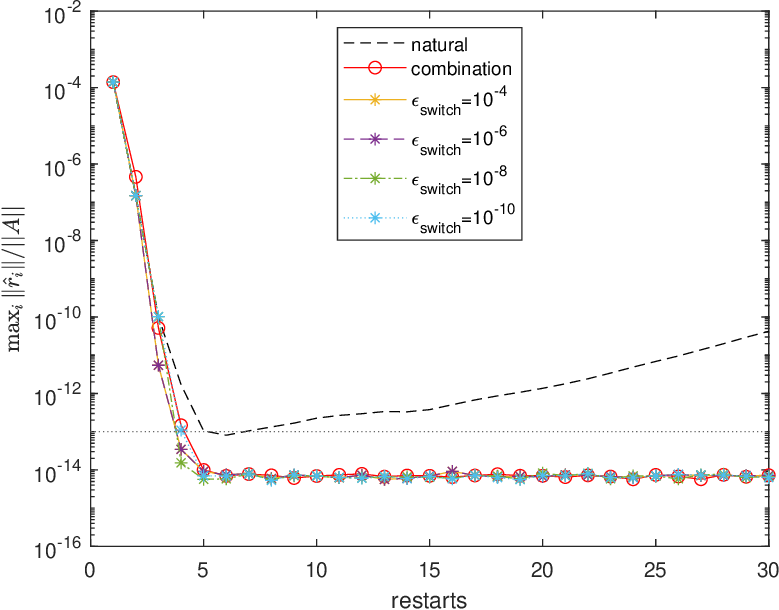}
    \subcaption{SiNa.}
  \end{subfigure}
  \begin{subfigure}[t]{\txtW}
    \centering
    \includegraphics[width=\textwidth]{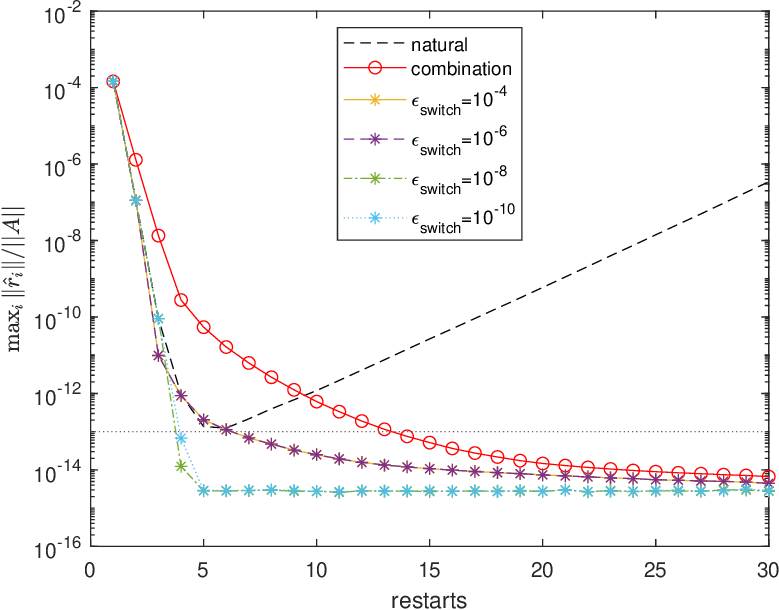}
    \subcaption{delaunay\_n13}
  \end{subfigure}

  \begin{subfigure}[t]{\txtW}
    \centering
    \includegraphics[width=\textwidth]{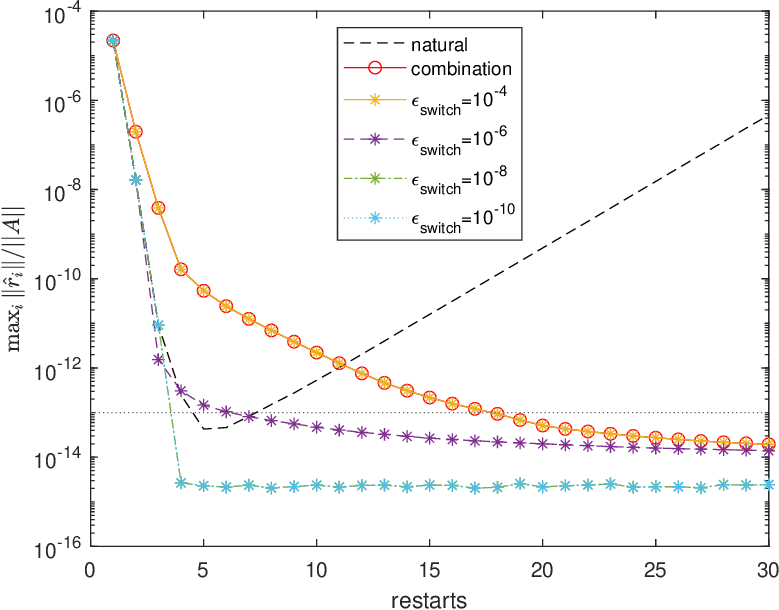}
    \subcaption{benzene}
  \end{subfigure}
  \begin{subfigure}[t]{\txtW}
    \centering
    \includegraphics[width=\textwidth]{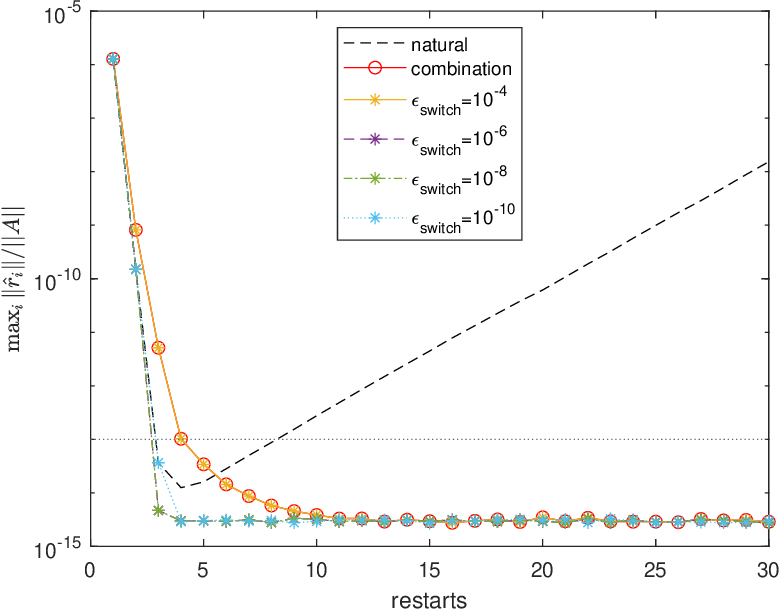}
    \subcaption{stokes64}
  \end{subfigure}
  \begin{subfigure}[t]{\txtW}
    \centering
    \includegraphics[width=\textwidth]{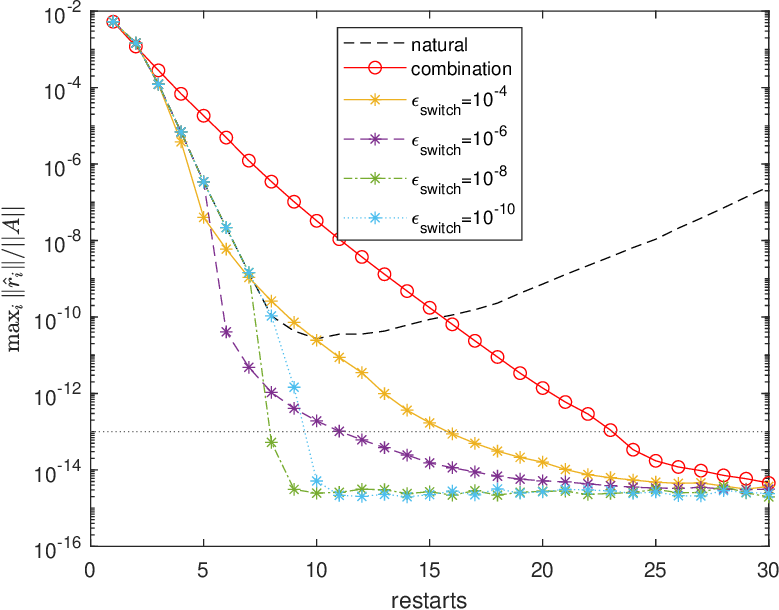}
    \subcaption{Pres\_Poisson}
    \label{subfig: hybrid: residual norms: Pres_Poisson}
  \end{subfigure}

  \caption{
    The average results over $5$ runs,
    where the $y$-axis represents the maximum relative residual norms of the refined
    Ritz pairs with the refined Ritz values in the given intervals. 
    The curves labeled ``natural'' and ``combination'' represent the
    algorithms with the natural and combination restarts, respectively.
    The other curves represent the hybrid restart with different values of $\epsilon_{\rm switch}$.
    The dashed lines parallel to the $x$-axis indicate the convergence tolerance $tol=10^{-13}$.
  }
  \label{fig: hybrid: residual norms}
\end{figure}

\cref{fig: hybrid: residual norms} illustrates the average results over five runs,
where the $y$-axis represents the maximum relative residual norms of the refined
Ritz pairs with the refined Ritz values in $[a,b]$. As the figure indicates,
the algorithm with the combination restart converged for all six test problems for $tol=10^{-13}$,
though it required more restarts for delaunay\_n13, benzene, and Pres\_Poisson.
For these three problems,
the CJ--SS--RRR algorithm with the hybrid restart reduced the number of restarts required for convergence,
indicating that the hybrid restart was more efficient
when the combination restart converged slowly. Importantly, we observe that
the natural restart was not numerically backward stable to
the tolerance $10^{-13}$ for delaunay\_n13 and Pres\_Poisson.

\begin{table}[htbp]
\caption{
  The average restarts for convergence over $5$ runs, where
  the columns ``Natural'' and ``Combination'' represent the algorithms with the natural restart and
  the combination restart, respectively,
  the columns ``$\epsilon_{\rm switch}$'' represent \cref{alg: CJSSRRR_hybrid} with
  $\epsilon_{\rm switch}$ in \cref{eq: switch criterion},
  and ``-'' indicates that none of the $5$ runs converged within $100$ restarts.
  The results in bold indicate the best performance among the restart schemes, and ``*'' denotes no implementation because of $\epsilon_{\rm switch}\leq tol$.}
\label{tab: hybrid: number of restarts}
\centering

\setlength{\tabcolsep}{5pt}
\begin{tabular}{*{8}{c}}
\toprule
$tol$                       & Matrix        & Natural    & Combination & \multicolumn{4}{c}{$\epsilon_{\rm switch}$}             \\ \cline{5-8}
                            &               &            &             & $10^{-4}$  & $10^{-6}$    & $10^{-8}$  & $10^{-10}$ \\
\midrule
\multirow{6}{*}{$10^{-8}$}  & SiH4          & 2.8        & \textbf{2}  & \textbf{2} & 2.8          & *          & *          \\
                            & SiNa          & \textbf{3} & \textbf{3}  & \textbf{3} & \textbf{3}   & *          & *          \\
                            & delaunay\_n13 & \textbf{3} & 3.8         & \textbf{3} & \textbf{3}   & *          & *          \\
                            & benzene       & \textbf{3} & 3.2         & 3.2        & \textbf{3}   & *          & *          \\
                            & stokes64      & \textbf{2} & \textbf{2}  & \textbf{2} & \textbf{2}   & *          & *          \\
                            & Pres\_Poisson & 7          & 11.4        & 6.2        & \textbf{6}   & *          & *          \\
\midrule
\multirow{6}{*}{$10^{-10}$} & SiH4          & \textbf{3} & \textbf{3}  & \textbf{3} & \textbf{3}   & \textbf{3} & *          \\
                            & SiNa          & 3.4        & \textbf{3}  & \textbf{3} & \textbf{3}   & 3.4        & *          \\
                            & delaunay\_n13 & 3.4        & 5           & \textbf{3} & \textbf{3}   & 3.4        & *          \\
                            & benzene       & \textbf{3} & 5           & 5          & \textbf{3}   & \textbf{3} & *          \\
                            & stokes64      & \textbf{3} & \textbf{3}  & \textbf{3} & \textbf{3}   & \textbf{3} & *          \\
                            & Pres\_Poisson & 8.6        & 16.2        & 9.6        & \textbf{6}   & 8          & *          \\
\midrule
\multirow{6}{*}{$10^{-12}$} & SiH4          & 4          & 4           & 4          & \textbf{3}   & 3.8        & 4          \\
                            & SiNa          & 5          & \textbf{4}  & \textbf{4} & \textbf{4}   & \textbf{4} & 4.4        \\
                            & delaunay\_n13 & 4.4        & 9.8         & 4.4        & 4.4          & \textbf{4} & \textbf{4} \\
                            & benzene       & 4          & 11.8        & 11.8       & \textbf{3.8} & 4          & 4          \\
                            & stokes64      & \textbf{3} & 4           & 4          & \textbf{3}   & \textbf{3} & \textbf{3} \\
                            & Pres\_Poisson & -          & 21          & 13.4       & 8.6          & \textbf{8} & 9.6        \\
\midrule
\multirow{6}{*}{$10^{-14}$} & SiH4          & -          & 6           & 6          & \textbf{4}   & \textbf{4} & \textbf{4} \\
                            & SiNa          & -          & 5.6         & 5.4        & 5.4          & \textbf{5} & 5.2        \\
                            & delaunay\_n13 & -          & 24          & 16.4       & 16.4         & \textbf{5} & \textbf{5} \\
                            & benzene       & -          & 55.4        & 55.4       & 45.4         & \textbf{4} & \textbf{4} \\
                            & stokes64      & -          & 7.4         & 7.4        & \textbf{3}   & \textbf{3} & 4          \\
                            & Pres\_Poisson & -          & 27.4        & 20.8       & 17           & \textbf{9} & 10         \\
\bottomrule
\end{tabular}

\end{table}

\cref{tab: hybrid: number of restarts} lists the average
restarts needed to reach different $tol$'s, 
where ``-'' denotes failure to converge in all five runs.
From the table, we find that,
compared to the combination restart,
using the hybrid restart with $\epsilon_{\rm switch} \approx \sqrt{tol}$
reduced the number of restarts substantially,
especially when $tol$ is small.
In particular, for $tol=10^{-14}$,
the algorithm with the natural restart failed to converge when $100$ restarts were consumed;
for Pres\_Poisson, the algorithm with the natural restart even failed for $tol=10^{-12}$.

Finally, regarding the overall efficiency, we see from the numerical results that
$\epsilon_{\rm switch} \approx \sqrt{tol}$ was a good choice.

\subsection{A comparison of CJ--SS--RRR with the natural restart and CJ--SS--RRR with the hybrid restart}

In this subsection, we compare the performance of \cref{alg: CJSSRRR}
and \cref{alg: CJSSRRR_hybrid} when greater accuracy is desired: $tol=10^{-13}$ and $10^{-14}$,
which are (almost) at the level of $\epsilon_{\rm mach}$ and require an algorithm to be
(almost) numerically backward stable.
We test the problems with intervals of interest in \cref{tab: test matrices},
and take the same parameters as in the last two subsections.
For each test problem and each $tol$,
we run each of the two algorithms five times with five different starting matrices $V^{(0)}$, allowing a maximum of $100$ restarts.

\begin{table}[!htbp]
\caption{
  The average restarts for convergence over $5$ runs and $tol=10^{-13}$ and $10^{-14}$, respectively.
  The mark ``-'' indicates failure when 100 restarts were used.
  The results with parentheses represent the percentage of successful convergence among 5 runs.
  The results in bold indicate the best performance.
}
\label{tab: comparison of CJSSRRR and CJSSRRR-thick}
\centering
\setlength{\tabcolsep}{5pt}

\begin{tabular}{*{10}{c}}
\toprule
\multirow{2}{*}{Interval} & \multirow{2}{*}{Matrix}
& \multicolumn{2}{c}{\multirow{2}{*}{\cref{alg: CJSSRRR}}} & & \multicolumn{5}{c}{\cref{alg: CJSSRRR_hybrid}} \\
\cline{6-10}
& & \multicolumn{2}{c}{} & & \multicolumn{2}{c}{Combination} & & \multicolumn{2}{c}{$\epsilon_{\rm switch} = 10^{-7}$} \\
\cline{3-4} \cline{6-7} \cline{9-10}
                     &                   & $10^{-13}$ & $10^{-14}$ & & $10^{-13}$ & $10^{-14}$ & & $10^{-13}$   & $10^{-14}$ \\
\midrule
\multirow{10}{*}{I}  & SiH4              & 5            & -        & & 4.2        & 6          & & \textbf{4}   & \textbf{4}    \\
                     & SiNa              & 5.75 (80\%)  & -        & & 5          & 5.6        & & \textbf{4}   & \textbf{5}    \\
                     & delaunay\_n13     & -            & -        & & 13.8       & 24         & & \textbf{4.6} & \textbf{7.6}  \\
                     & benzene           & \textbf{5}   & -        & & 18         & 55.4       & & 6.8          & \textbf{45.4} \\
                     & stokes64          & \textbf{3}   & -        & & 4.6        & 7.4        & & \textbf{3}   & \textbf{3}    \\
                     & Pres\_Poisson     & -            & -        & & 23.4       & 27.4       & & \textbf{8.2} & \textbf{13}   \\
                     & Si10H16           & 4            & -        & & 5          & 6.6        & & \textbf{3.2} & \textbf{4}    \\
                     & rgg\_n\_2\_15\_s0 & -            & -        & & 12.6       & 20.8       & & \textbf{5.6} & \textbf{13.2} \\
                     & SiO               & 3.2          & -        & & 3.4        & 4          & & \textbf{3}   & \textbf{3}    \\
                     & Andrews           & 5            & -        & & 9.2        & 16.8       & & \textbf{4}   & \textbf{9}    \\
\midrule
\multirow{10}{*}{II} & SiH4              & 4            & -        & & 4.2        & 5.4        & & \textbf{3.2} & \textbf{4.2}  \\
                     & SiNa              & 4.2          & -        & & 6.4        & 8.4        & & \textbf{4}   & \textbf{5}    \\
                     & delaunay\_n13     & -            & -        & & 33         & 61.8       & & \textbf{5.4} & \textbf{33.4} \\
                     & benzene           & \textbf{4.2} & -        & & 24.4       & 45.4       & & 10.4         & \textbf{33}   \\
                     & stokes64          & \textbf{5}   & -        & & 8          & 9.8        & & \textbf{5}   & \textbf{5}    \\
                     & Pres\_Poisson     & \textbf{4.2} & -        & & 9.6        & 12.4       & & 4.8          & \textbf{8.2}  \\
                     & Si10H16           & \textbf{4}   & -        & & 5          & 7          & & \textbf{4}   & \textbf{5}    \\
                     & rgg\_n\_2\_15\_s0 & \textbf{5}   & -        & & 13.6       & 18         & & 5.6          & \textbf{12}   \\
                     & SiO               & 4            & -        & & 4.4        & 7.6        & & \textbf{3}   & \textbf{4}    \\
                     & Andrews           & 4            & -        & & 6.4        & 12         & & \textbf{3}   & \textbf{4.2}  \\
\bottomrule
\end{tabular}
\end{table}

In \cref{tab: comparison of CJSSRRR and CJSSRRR-thick}, we report the average restarts
with $\epsilon_{\rm switch} = 10^{-7}\approx \sqrt{tol}$.
As the results indicate, for $tol=10^{-13}$,
\cref{alg: CJSSRRR} failed for four test cases in all five runs and
failed for one additional test case in one of the five runs.
For $tol=10^{-14}$, it failed to converge for all the test cases.

In contrast, the algorithms with the combination and hybrid restarts were successful
for all the test cases and the two $tol$'s.
Both algorithms were numerically backward stable to the working precision.
Moreover, for $tol=10^{-13}$,
the hybrid restart used fewer restarts than the combination restart in all the $20$ test cases.

\section{Conclusion}
\label{sec: conclusion}

We have investigated the naive natural restart used in the block SS--RR algorithms
including the CJ--SS--RR and CJ--SS--RRR algorithms, and proposed
an orthonormalized natural restart. We have proved that the resulting
two basis matrices of
the search subspace become increasingly ill-conditioned quickly as restarts proceed.
As a consequence, the corresponding restarted algorithms are numerically unstable, and cannot
compute the desired eigenpairs accurately; if computing the TSVD of the basis matrices to
obtain effective subspaces, then their dimensions ultimately are smaller than $n_{ev}$,
so that the algorithms cannot compute all the $n_{ev}$ desired eigenpairs. We have confirmed these
results numerically.

To fix the deficiency of natural restarts, we have proposed a combination restart scheme that fully
exploits the current approximate eigenvectors and their random linear
combinations to construct a better restarting matrix $V^{(k)}$.
We have shown that the numerical rank of the basis matrices of the new updated
subspaces is no smaller than $n_{ev}$, so that the subspaces retain the information
on all the desired eigenvectors and the algorithm is numerically stable.
To enhance the overall efficiency, we have further proposed a hybrid restart scheme
that starts with the natural restart and then
switches to the combination restart once the approximate eigenvectors start to have some accuracy.

We have conducted numerical experiments to confirm the results and analysis,
illustrating that the combination restart is numerically much more stable than the two natural restarts
and the algorithm with the hybrid restart is not only more efficient than the algorithm
with the natural restart but also
numerically backward stable to the working precision.

The combination and hybrid restarts, results and analysis can be extended to the block
SS--RR and SS--RRR for general standard and generalized eigenvalue problems, with some necessary
modifications.

\section*{Declarations}
The two authors declare that they have no
financial interests, and they read and approved the final manuscript.  

\smallskip

{\bf Data Availability} \ Not applicable.


\begin{thebibliography}{10}

\bibitem{bai2000}
{\sc Z.~Bai, J.~Demmel, J.~Dongarra, A.~Ruhe, and H.~A. Van~der Vorst}, {\em
  Templates for the Solution of Algebraic Eigenvalue Problems: A Practical
  Guide}, SIAM, Philadelphia, PA, 2000,
  \url{https://doi.org/10.1137/1.9780898719581}.

\bibitem{LSEP}
{\sc Z.~Bai and R.-C. Li}, {\em Minimization principles for the linear response
  eigenvalue problem {I}: Theory}, SIAM J. Matrix Anal. Appl., 33 (2012),
  pp.~1075--1100, \url{https://doi.org/10.1137/110838960}.

\bibitem{beckermann2000}
{\sc B.~Beckermann}, {\em The condition number of real {Vandermonde}, {Krylov}
  and positive definite {Hankel} matrices}, Numer. Math., 85 (2000),
  pp.~553--577, \url{https://doi.org/10.1007/PL00005392}.

\bibitem{matrix_collection}
{\sc T.~A. Davis and Y.~Hu}, {\em The {U}niversity of {Florida} sparse matrix
  collection}, ACM Trans. Math. Softw., 38 (2011),
  \url{https://doi.org/10.1145/2049662.2049663}.

\bibitem{eig_count_Saad}
{\sc E.~Di~Napoli, E.~Polizzi, and Y.~Saad}, {\em Efficient estimation of
  eigenvalue counts in an interval}, Numer. Linear Algebra Appl., 23 (2016),
  pp.~674--692, \url{https://doi.org/10.1002/nla.2048}.

\bibitem{IFEAST}
{\sc B.~Gavin and E.~Polizzi}, {\em {Krylov} eigenvalue strategy using the
  {FEAST} algorithm with inexact system solves}, Numer. Linear Algebra Appl.,
  25 (2018), e2188, \url{https://doi.org/10.1002/nla.2188}.

\bibitem{Golub_Matrix}
{\sc G.~H. Golub and C.~F. Van~Loan}, {\em Matrix Computations}, The Johns
  Hopkins University Press, Baltimore, 4th~ed., 2013,
  \url{https://doi.org/10.1137/1.9781421407944}.

\bibitem{FEAST_Zolotarev}
{\sc S.~G\"{u}ttel, E.~Polizzi, P.~T.~P. Tang, and G.~Viaud}, {\em {Zolotarev}
  quadrature rules and load balancing for the {FEAST} eigensolver}, SIAM J.
  Sci. Comput., 37 (2015), pp.~A2100--A2122,
  \url{https://doi.org/10.1137/140980090}.

\bibitem{higham}
{\sc N.~J. Higham}, {\em Accuracy and Stability of Numerical Algorithms, Second
  Edition}, SIAM, Philadelphia, PA, 2002,
  \url{https://doi.org/10.1137/1.9780898718027}.

\bibitem{block_SS_RR}
{\sc T.~Ikegami and T.~Sakurai}, {\em Contour integral eigensolver for
  {non-Hermitian} systems: a {Rayleigh--Ritz-type} approach}, Taiwanese J.
  Math., 14 (2010), pp.~825--837,
  \url{https://doi.org/10.11650/twjm/1500405869}.

\bibitem{SS_theory2010}
{\sc T.~Ikegami, T.~Sakurai, and U.~Nagashima}, {\em A filter diagonalization
  for generalized eigenvalue problems based on the {Sakurai--Sugiura}
  projection method}, J. Comput. Appl. Math., 233 (2010), pp.~1927--1936,
  \url{https://doi.org/10.1016/j.cam.2009.09.029}.

\bibitem{SS_Arnoldi}
{\sc A.~Imakura, L.~Du, and T.~Sakurai}, {\em A block {Arnoldi-type} contour
  integral spectral projection method for solving generalized eigenvalue
  problems}, Appl. Math. Lett., 32 (2014), pp.~22--27,
  \url{https://doi.org/10.1016/j.aml.2014.02.007}.

\bibitem{SS_theory2016}
{\sc A.~Imakura, L.~Du, and T.~Sakurai}, {\em Error bounds of {Rayleigh--Ritz}
  type contour integral-based eigensolver for solving generalized eigenvalue
  problems}, Numer. Algorithms, 71 (2016), pp.~103--120,
  \url{https://doi.org/10.1007/s11075-015-9987-4}.

\bibitem{SS_review}
{\sc A.~Imakura, L.~Du, and T.~Sakurai}, {\em Relationships among contour
  integral-based methods for solving generalized eigenvalue problems}, Jpn. J.
  Ind. Appl. Math., 33 (2016), pp.~721--750,
  \url{https://doi.org/10.1007/s13160-016-0224-x}.

\bibitem{Jackson_damp}
{\sc L.~O. Jay, H.~Kim, Y.~Saad, and J.~R. Chelikowsky}, {\em Electronic
  structure calculations for plane-wave codes without diagonalization}, Comput.
  Phys. Commun., 118 (1999), pp.~21--30,
  \url{https://doi.org/10.1016/S0010-4655(98)00192-1}.

\bibitem{jia1997refined}
{\sc Z.~Jia}, {\em Refined iterative algorithms based on {Arnoldi}'s process
  for large unsymmetric eigenproblems}, Linear Algebra Appl., 259 (1997),
  pp.~1--23, \url{https://doi.org/10.1016/S0024-3795(96)00238-8}.

\bibitem{jia_block}
{\sc Z.~Jia}, {\em Generalized block {Lanczos} methods for large unsymmetric
  eigenproblems}, Numer. Math., 80 (1998), pp.~239--266,
  \url{https://doi.org/10.1007/s002110050367}.

\bibitem{jia2000refined}
{\sc Z.~Jia}, {\em A refined subspace iteration algorithm for large sparse
  eigenproblems}, Appl. Numer. Math., 32 (2000), pp.~35--52,
  \url{https://doi.org/10.1016/S0168-9274(99)00008-2}.

\bibitem{CJ_SS_RR}
{\sc Z.~Jia and T.~Liu}, {\em A {Chebyshev--Jackson} series based block
  {SS--RR} algorithm for computing partial eigenpairs of real symmetric
  matrices}, arXiv preprint arXiv:2508.20456,  (2025).

\bibitem{CJ_SS_RRR}
{\sc Z.~Jia and T.~Liu}, {\em A refined {CJ--SS--RR} method for computing
  partial eigenpairs of {Hermitian} matrices}, arXiv preprint arXiv:2605.12846,
   (2026).

\bibitem{jiastewart2001}
{\sc Z.~Jia and G.~W. Stewart}, {\em An analysis of the {Rayleigh--Ritz} method
  for approximating eigenspaces}, Math. Comput., 70 (2001), pp.~637--647,
  \url{https://doi.org/10.1090/S0025-5718-00-01208-4}.

\bibitem{CJ_FEAST_cross}
{\sc Z.~Jia and K.~Zhang}, {\em A {FEAST} {SVDsolver} based on
  {Chebyshev--Jackson} series for computing partial singular triplets of large
  matrices}, J. Sci. Comput., 97 (2023), 21,
  \url{https://doi.org/10.1007/s10915-023-02342-y}.

\bibitem{CJ_FEAST_augmented}
{\sc Z.~Jia and K.~Zhang}, {\em An augmented matrix-based {CJ-FEAST}
  {SVDsolver} for computing a partial singular value decomposition with the
  singular values in a given interval}, SIAM J. Matrix Anal. Appl., 45 (2024),
  pp.~24--58, \url{https://doi.org/10.1137/23M1547500}.

\bibitem{CJ_FEAST_GSVD}
{\sc Z.~Jia and K.~Zhang}, {\em A {CJ-FEAST GSVD}solver for computing a partial
  gsvd of a large matrix pair with the generalized singular values in a given
  interval}, Numer. Math., 157 (2025), pp.~897--949,
  \url{https://doi.org/10.1007/s00211-025-01466-7}.

\bibitem{Jin_FEM_EM}
{\sc J.-M. Jin}, {\em The Finite Element Method in Electromagnetics}, John
  Wiley \& Sons, Inc., Hoboken, NJ, 3rd~ed., 2014.

\bibitem{FEAST_non_Hermitian}
{\sc J.~Kestyn, E.~Polizzi, and P.~T. Peter~Tang}, {\em {FEAST} eigensolver for
  {non-Hermitian} problems}, SIAM J. Sci. Comput., 38 (2016), pp.~S772--S799,
  \url{https://doi.org/10.1137/15M1026572}.

\bibitem{Martinsson_Tropp_2020}
{\sc P.-G. Martinsson and J.~A. Tropp}, {\em Randomized numerical linear
  algebra: foundations and algorithms}, Acta Numerica, 29 (2020), pp.~403--572,
  \url{https://doi.org/10.1017/s0962492920000021}.

\bibitem{Parlett_Symmetric}
{\sc B.~N. Parlett}, {\em The Symmetric Eigenvalue Problem}, SIAM,
  Philadelphia, PA, 1998, \url{https://doi.org/10.1137/1.9781611971163}.

\bibitem{FEAST_SI}
{\sc P.~T. Peter~Tang and E.~Polizzi}, {\em {FEAST} as a subspace iteration
  eigensolver accelerated by approximate spectral projection}, SIAM J. Matrix
  Anal. Appl., 35 (2014), pp.~354--390,
  \url{https://doi.org/10.1137/13090866X}.

\bibitem{FEAST2009}
{\sc E.~Polizzi}, {\em Density-matrix-based algorithm for solving eigenvalue
  problems}, Phys. Rev. B, 79 (2009), 115112,
  \url{https://doi.org/10.1103/PhysRevB.79.115112}.

\bibitem{ApproxIntroduction}
{\sc T.~J. Rivlin}, {\em An Introduction to the Approximation of Functions},
  Dover, New York, 1981.

\bibitem{Saad_Eigen}
{\sc Y.~Saad}, {\em Numerical Methods for Large Eigenvalue Problems, Revised
  Edition}, SIAM, Philadelphia, PA, 2011,
  \url{https://doi.org/10.1137/1.9781611970739}.

\bibitem{SS_restart1}
{\sc T.~Sakurai, Y.~Futamura, and H.~Tadano}, {\em Efficient parameter
  estimation and implementation of a contour integral-based eigensolver}, J.
  Algorithms Comput. Technol., 7 (2013), pp.~249--269,
  \url{https://doi.org/10.1260/1748-3018.7.3.249}.

\bibitem{SS_Hankel}
{\sc T.~Sakurai and H.~Sugiura}, {\em A projection method for generalized
  eigenvalue problems using numerical integration}, J. Comput. Appl. Math., 159
  (2003), pp.~119--128, \url{https://doi.org/10.1016/S0377-0427(03)00565-X}.

\bibitem{SS_RR}
{\sc T.~Sakurai and H.~Tadano}, {\em {CIRR}: a {Rayleigh--Ritz} type method
  with contour integral for generalized eigenvalue problems}, Hokkaido Math.
  J., 36 (2007), pp.~745--757, \url{https://doi.org/10.14492/hokmj/1272848031}.

\bibitem{Stewart_Eigen}
{\sc G.~W. Stewart}, {\em Matrix Algorithms, Volume II: Eigensystems}, SIAM,
  Philadelphia, PA, 2001, \url{https://doi.org/10.1137/1.9780898718058}.

\bibitem{book:VanDerVorst2002}
{\sc H.~A. van~der Vorst}, {\em {Computational Methods for Large Eigenvalue
  Problems}}, Handbook of Numerical Analysis, Vol. VIII, edited by P. G.
  Ciarlet and J. L. Lions, Elsevier, 2002,
  \url{https://doi.org/10.1016/S1570-8659(02)08003-1}.

\bibitem{FEAST_oblique}
{\sc G.~Yin, R.~H. Chan, and M.-C. Yeung}, {\em A {FEAST} algorithm with
  oblique projection for generalized eigenvalue problems}, Numer. Linear
  Algebra Appl., 24 (2017), e2092, \url{https://doi.org/10.1002/nla.2092}.

\bibitem{tangent_angle}
{\sc P.~Zhu and A.~Knyazev}, {\em Angles between subspaces and their tangents},
  J. Numer. Math., 21 (2013), pp.~325--340,
  \url{https://doi.org/10.1515/jnum-2013-0013}.

\end{thebibliography}

\end{document}